\newtheorem{theorem}{Theorem}[section]
\newtheorem{lemma}{Lemma}[section]
\newtheorem{corollary}{Corollary}[section]
\newtheorem{remark}{Remark}[section]
\newtheorem{example}{Example}[section]
\newtheorem{conjecture}{Conjecture}[section]
\begin{document}
\title
{\bf Classifications of Dupin Hypersurfaces in Lie Sphere Geometry}
\author
{Thomas E. Cecil}
\maketitle

\begin{abstract}
This is a survey of local and global classification results concerning Dupin hypersurfaces in 
$S^n$ (or ${\bf R}^n$) that have been obtained in the context of Lie sphere geometry.  The emphasis is on results that relate Dupin hypersurfaces to isoparametric hypersurfaces in spheres.  Along with these classification results,
many important concepts from Lie sphere geometry, such as curvature spheres, Lie curvatures, and Legendre lifts of submanifolds of $S^n$ (or ${\bf R}^n$), are described in detail. The paper also contains several 
important constructions of Dupin hypersurfaces with certain special properties.
\end{abstract}

\noindent
An oriented hypersurface $f:M^{n-1} \rightarrow S^n$ (or ${\bf R}^n$) is said to be {\em Dupin} if:

\begin{enumerate}
\item[(a)] along each curvature surface, the corresponding principal
curvature is constant.
\end{enumerate}
The hypersurface $M$ is called {\em proper Dupin} if, in addition 
to Condition (a), the following condition is satisfied:

\begin{enumerate}
\item[(b)] the number $g$ of distinct principal curvatures is constant on
$M$.
\end{enumerate}
An important class of proper Dupin hypersurfaces consists of the isoparametric (constant principal curvatures)
hypersurfaces in $S^n$, and those hypersurfaces in ${\bf R}^n$ obtained from isoparametric hypersurfaces in $S^n$ via
stereographic projection.  For example,  a standard product torus $S^1(r) \times S^1(s) \subset S^3$, $r^2+s^2=1$, is an
isoparametric hypersurface with two principal curvatures, and 
the well-known ring cyclides of Dupin \cite{D}, \cite[pp. 148--159]{Cec1} in ${\bf R}^3$ are obtained
from standard product tori via stereographic projection.

In this paper, we will discuss various local and global classification results for proper Dupin hypersurfaces 
in $S^n$ (or ${\bf R}^n$) that have been obtained in the context of Lie sphere geometry.
The emphasis is on results that relate Dupin hypersurfaces to isoparametric hypersurfaces 
in spheres.  

Along with these classification results,
many important concepts from Lie sphere geometry, such as curvature spheres, Lie curvatures, and Legendre lifts of submanifolds of $S^n$, are described in detail.  The paper also contains several important
constructions of Dupin hypersurfaces with certain special properties.
The presentation is based primarily on the 
author's book \cite{Cec1}, and some passages are direct quotations from that book.  

\section{Basic Concepts of Lie Sphere Geometry}
\label{chap:1} 

\subsection{Indefinite space forms and projective space}
\label{sec:1.1}

We begin with some 
preliminary remarks on indefinite scalar product spaces and projective
geometry, along with the corresponding notation.  A {\em scalar product} is a nondegenerate bilinear form on
a real vector space $V$.
We will use the notation ${\bf R}^n_k$ for the vector space $V = {\bf R}^n$ endowed with a scalar
product having signature $(n-k,k)$, and we will primarily consider the cases where $k = 0,1$
or 2.  However, at times we will consider subspaces of ${\bf R}^n_k$
on which the bilinear form is degenerate.

Let $(x,y)$ be the indefinite scalar product of signature $(n,1)$ on the Lorentz space
${\bf R}^{n+1}_1$ defined by
\begin{equation}
\label{eq:1.1.1}
(x,y) = - x_1y_1 + x_2y_2 + \cdots + x_{n+1}y_{n+1},
\end{equation}
where $x = (x_1,\ldots,x_{n+1})$ and $y = (y_1,\ldots,y_{n+1})$.  We will call
this scalar product the {\em Lorentz metric}.
A vector $x$ is said to
be {\em spacelike}, {\em timelike} or {\em lightlike}, respectively,
depending on whether $(x,x)$ is positive, negative or zero.  We
will use this terminology even when we are using a metric of
different signature.
In Lorentz space, the set of all lightlike
vectors, given by the equation,
\begin{equation}
\label{eq:1.1.2}
x_1^2 = x_2^2 + \cdots + x_{n+1}^2,
\end{equation}
forms a cone of revolution, called the 
{\em light cone}.  Timelike vectors are
``inside the cone'' and spacelike vectors are ``outside the cone.''

If $x$ is a nonzero vector in ${\bf R}^{n+1}_1$, let $x^{\perp}$ 
denote the orthogonal complement of $x$ with respect to the Lorentz metric.
If $x$ is timelike, then the metric restricts to a positive definite
form on $x^{\perp}$, and $x^{\perp}$ intersects the light cone only at
the origin.  If $x$ is spacelike, then the metric has signature $(n-1,1)$
on $x^{\perp}$, and $x^{\perp}$ intersects the cone in a cone of
one less dimension.  If $x$ is lightlike, then $x^{\perp}$ is tangent to
the cone along the line through the origin determined by $x$.  The
bilinear form has signature $(n-1,0)$ on this $n$-dimensional plane.

Lie sphere geometry is defined in the context of real projective space ${\bf P}^n$,
so we now recall some concepts from projective geometry.
We define an equivalence relation on ${\bf R}^{n+1} - \{0\}$ by setting
$x \simeq y$ if $x = ty$ for some nonzero real number $t$.  We denote
the equivalence class determined by a vector $x$ by $[x]$.  Projective
space ${\bf P}^n$ is the set of such equivalence classes, and it can
naturally be identified with the space of all lines through the origin
in ${\bf R}^{n+1}$.  The rectangular coordinates $(x_1, \ldots, x_{n+1})$
are called {\em homogeneous coordinates}
of the point $[x] \in {\bf P}^n$, and they
are only determined up to a nonzero scalar multiple.  

The affine space
${\bf R}^n$ can be embedded in ${\bf P}^n$ as the complement of the hyperplane
$(x_1 = 0)$ at infinity by the map $\phi: {\bf R}^n \rightarrow
{\bf P}^n$ given by $\phi (u) = [(1,u)]$.  A scalar product on 
${\bf R}^{n+1}$, such as the Lorentz metric, determines a polar
relationship between points and hyperplanes in ${\bf P}^n$.  We
will also use the notation $x^{\perp}$ to denote the polar hyperplane
of $[x]$ in ${\bf P}^n$, and we will call $[x]$ the {\em pole} of $x^{\perp}$.

If $x$ is a lightlike vector in  ${\bf R}^{n+1} - \{0\}$, then $[x]$ can be represented
by a vector of the form $(1,u)$ for $u \in {\bf R}^n$.  Then the equation
$(x,x) = 0$ for the light cone becomes $u \cdot u = 1$ (Euclidean dot product),
i.e., the equation for the unit sphere in  ${\bf R}^n$.  Hence, the set of 
points in ${\bf P}^n$ determined by lightlike vectors in  ${\bf R}^{n+1} - \{0\}$
is naturally diffeomorphic to the sphere $S^{n-1}$.

\subsection{M\"{o}bius geometry of unoriented spheres}
\label{sec:1.2}

In this section, we study the space of all (unoriented) hyperspheres in Euclidean $n$-dimensional space ${\bf R}^n$ 
and in the unit
sphere $S^n \subset {\bf R}^{n+1}$.  These two spaces of spheres are closely related via stereographic
projection, as we now recall.  We will always assume that $n \geq 2.$  (See \cite[pp. 11--14]{Cec1} for more detail.)

We denote the Euclidean dot product of two vectors $u$ and $v$ in
${\bf R}^n$ by $u \cdot v$.  We first consider stereographic
projection $\sigma : {\bf R}^n \rightarrow S^n - \{P\}$, where
$S^n$ is the unit sphere in ${\bf R}^{n+1}$ given by $y \cdot y = 1$,
and $P = (-1,0,\ldots,0)$ is the south pole 
of $S^n$.  
The well-known formula for $\sigma (u)$ is
\begin{displaymath}
\sigma (u) = \left(\frac{1-u \cdot u}{1+ u \cdot u}, \frac{2u}{1+ u \cdot u}
\right).
\end{displaymath}

We next embed ${\bf R}^{n+1}$ into ${\bf P}^{n+1}$ by the embedding $\phi$
mentioned in the preceding section.  Thus, we have the map 
$\phi \sigma : {\bf R}^n \rightarrow {\bf P}^{n+1}$ given by
\begin{equation}
\label{eq:1.2.1}
\phi \sigma (u) = \left[ \left( 1,\frac{1-u \cdot u}{1+ u \cdot u}, \frac{2u}{1+ u \cdot u}
\right) \right] = \left[ \left( \frac{1+u \cdot u}{2},\frac{1-u \cdot u}{2},u \right) \right].
\end{equation}
Let $(z_1,\ldots,z_{n+2})$ be homogeneous coordinates on ${\bf P}^{n+1}$
and $(\ ,\ )$ the Lorentz metric on the space ${\bf R}^{n+2}_1$.  Then
$\phi \sigma ({\bf R}^n)$ is the set of points in ${\bf P}^{n+1}$
lying on the $n$-sphere $\Sigma$ given by the equation $(z,z) = 0$,
with the exception of the {\em improper point} $[(1,-1,0,\ldots,0)]$
corresponding to the south pole $P$.  We will refer to 
the points in $\Sigma$ other than $[(1,-1,0,\ldots,0)]$ as
{\em proper points}, and we will call $\Sigma$ the 
{\em M\"{o}bius sphere} or {\em M\"{o}bius space}. 

The basic framework for the M\"{o}bius geometry of unoriented spheres
is as follows.  Suppose that $\xi$ is a spacelike vector in ${\bf R}^{n+2}_1$.
Then the polar hyperplane $\xi^{\perp}$ to $[\xi]$ in ${\bf P}^{n+1}$
intersects the sphere $\Sigma$ in an $(n-1)$-sphere $S^{n-1}$.
This sphere $S^{n-1}$ is the image under $\phi \sigma$ of an $(n-1)$-sphere in ${\bf R}^n$,
unless it contains the improper point $[(1,-1,0,\ldots,0)]$, in which case
it is the image under $\phi \sigma$ of a hyperplane in ${\bf R}^n$.  Hence,
we have a bijective correspondence between the set of all spacelike points in 
${\bf P}^{n+1}$ and the set of all hyperspheres and hyperplanes in ${\bf R}^n$.

We now find specific formulas for this correspondence.
Consider the sphere in ${\bf R}^n$ with center $p$ and radius $r>0$
given by the equation
\begin{equation}
\label{eq:1.2.2}
(u-p) \cdot (u-p) = r^2, \quad u \in {\bf R}^n.
\end{equation}
We now translate this into an equation involving the Lorentz metric
and the corresponding polarity relationship on ${\bf P}^{n+1}$.  A direct calculation
shows that equation (\ref{eq:1.2.2}) is equivalent to the equation
\begin{equation}
\label{eq:1.2.3}
(\xi,\phi \sigma (u)) = 0,
\end{equation}
where $\xi$ is the spacelike vector,
\begin{equation}
\label{eq:1.2.4}
\xi = \left( \frac{1+p \cdot p - r^2}{2}, \frac{1- p \cdot p +r^2}{2},p \right),
\end{equation}
and $\phi \sigma (u)$ is given by equation (\ref{eq:1.2.1}).  Thus, the point $u$ is on the sphere
given by equation (\ref{eq:1.2.2}) if and only if $\phi \sigma (u)$ lies on the polar
hyperplane of $[\xi]$.  Note that the first two coordinates
of $\xi$ satisfy $\xi_1 + \xi_2 = 1$, and that
$(\xi,\xi) = r^2$.  Although $\xi$ is only determined
up to a nonzero scalar multiple, we can conclude that 
$\eta_1 + \eta_2$ is not zero for any $\eta \simeq \xi$.

Conversely, given a spacelike point $[z]$ with $z_1 + z_2$ nonzero,
we determine the corresponding sphere in ${\bf R}^n$ as follows.
Let $\xi = z/(z_1 + z_2)$ so that $\xi_1 + \xi_2 = 1$. Then from equation
(\ref{eq:1.2.4}), the center of the corresponding sphere is the point
$p = (\xi_3,\ldots,\xi_{n+2})$, and the radius is the square root of
$(\xi,\xi)$.

We next suppose that $\eta$ is a spacelike vector with $\eta_1 + \eta_2 =0$.
Then
\begin{displaymath}
(\eta, (1,-1,0,\ldots,0)) = 0.
\end{displaymath}
In that case, the improper point $\phi (P)$ lies on the polar hyperplane of $[\eta ]$,
and the point $[\eta ]$ corresponds to a hyperplane in ${\bf R}^n$.
Again we can find an explicit correspondence as follows.  

Consider the hyperplane in
${\bf R}^n$ given by the equation
\begin{equation}
\label{eq:1.2.5}
u \cdot N = h, \quad |N| = 1.
\end{equation}
A direct calculation shows that (\ref{eq:1.2.5}) is equivalent to the equation
\begin{equation}
\label{eq:1.2.6}
(\eta, \phi \sigma (u)) = 0, {\rm where} \ \eta = (h, -h, N).
\end{equation}
So the hyperplane (\ref{eq:1.2.5}) is represented in the polarity relationship
by $[\eta ]$.  Conversely, let $z$ be a spacelike point with
$z_1 + z_2 = 0$.  Then $(z,z) = v \cdot v$, where $v = (z_3,\ldots,z_{n+2})$.
Let $\eta = z / |v|$.  Then $\eta$ has the form (\ref{eq:1.2.6}) and
$[z]$ corresponds to the hyperplane (\ref{eq:1.2.5}).  Thus we have explicit formulas for
the bijective correspondence between the set of spacelike points in ${\bf P}^{n+1}$
and the set of hyperspheres and hyperplanes in ${\bf R}^n$.

The fundamental invariant of M\"{o}bius geometry is the angle.
The study of angles in our setting is quite natural, since orthogonality between
spheres and planes in ${\bf R}^n$ can be expressed in terms of the Lorentz
metric.  Specifically, let $S_1$ and $S_2$ denote the spheres in ${\bf R}^n$ with respective
centers $p_1$ and $p_2$ and respective radii $r_1$ and $r_2$.  By the
Pythagorean Theorem,
the two spheres intersect orthogonally 
if and only if
\begin{equation}
\label{eq:1.2.7}
|p_1 - p_2|^2 = r_1^2 + r_2^2.
\end{equation}

If these spheres correspond by equation (\ref{eq:1.2.4}) to the projective points
$[\xi_1]$ and $[\xi_2]$, respectively, then a calculation shows that equation
(\ref{eq:1.2.7}) is equivalent to the condition
\begin{equation}
\label{eq:1.2.8}
(\xi_1,\xi_2) = 0.
\end{equation}
A hyperplane $\pi$ in ${\bf R}^n$ is orthogonal to a hypersphere $S$ precisely
when $\pi$ passes through the center of $S$.  If $S$ has center $p$ and
radius $r$, and $\pi$ is given by the equation $u \cdot N = h$, then the
condition for orthogonality is just $p \cdot N = h$.  If $S$ corresponds
to $[\xi]$ as in (\ref{eq:1.2.4}) and $\pi$ corresponds to $[\eta]$ as in
(\ref{eq:1.2.6}), then this equation for orthogonality is equivalent to
$(\xi, \eta) = 0$.  Finally, if two planes $\pi_1$ and $\pi_2$ are represented
by $[\eta_1]$ and $[\eta_2]$ as in (\ref{eq:1.2.6}), then the orthogonality
condition $N_1 \cdot N_2 = 0$ is equivalent to the equation
$(\eta_1, \eta_2) = 0$.

A {\em M\"{o}bius transformation} is a projective transformation of ${\bf P}^{n+1}$
which preserves the condition $(\eta, \eta) = 0$. This implies (see, for example,
\cite[p. 26]{Cec1}) that 
a M\"{o}bius transformation also preserves the relationship $(\eta, \xi) = 0$,
and it maps spacelike points to spacelike points.  Thus it preserves
orthogonality (and hence angles) between spheres and planes in ${\bf R}^n$.  
In fact, the group of M\"{o}bius transformations is isomorphic
to $O(n+1,1)/\{\pm I\}$, where $O(n+1,1)$ is the group of orthogonal transformations
of the Lorentz space ${\bf R}^{n+2}_1$ (see, for example, \cite[p. 27]{Cec1}).

Note that a M\"{o}bius transformation takes lightlike vectors to lightlike vectors,
and so it induces a conformal diffeomorphism of the sphere $\Sigma$ onto itself.
It is well known that the group of conformal diffeomorphisms of the sphere
is precisely the M\"{o}bius group.

\subsection{Lie geometry of oriented hyperspheres in ${\bf R}^n$}
\label{sec:1.3}
We now turn to the construction of Lie's \cite{Lie} geometry of oriented spheres in ${\bf R}^n$.
(See \cite[pp. 14--18]{Cec1} for more detail.)
Let $W^{n+1}$ be the set of vectors in ${\bf R}^{n+2}_1$ satisfying $(\zeta, \zeta) = 1.$  This
is a hyperboloid 
of revolution of one sheet in ${\bf R}^{n+2}_1$.  If $\alpha$ is a spacelike
point in ${\bf P}^{n+1}$, then there are precisely two vectors $\pm \zeta$ in $W^{n+1}$ with
$\alpha = [\zeta]$.  These two vectors can be taken to correspond to the two orientations
of the oriented sphere or plane represented by $\alpha$ in the following way.  First, embed ${\bf R}^{n+2}_1$ into 
${\bf P}^{n+2}$ by the embedding
$z \mapsto [(z,1)]$.  If $\zeta \in W^{n+1}$, then
\begin{displaymath}
- \zeta_1^2 + \zeta_2^2 + \cdots + \zeta_{n+2}^2 = 1,
\end{displaymath}
so the point $[(\zeta, 1)]$ in ${\bf P}^{n+2}$ lies on the quadric $Q^{n+1}$ in ${\bf P}^{n+2}$
given in homogeneous coordinates by the equation
\begin{equation}
\label{eq:1.3.1}
\langle x,x \rangle = - x_1^2 + x_2^2 + \cdots + x_{n+2}^2 - x_{n+3}^2 = 0.
\end{equation}
The manifold $Q^{n+1}$ is called the {\em Lie quadric}, and the scalar product
determined by the quadratic form in (\ref{eq:1.3.1}) is called the {\em Lie metric}
or {\em Lie scalar product}.  
We will let $\{e_1,\ldots,e_{n+3}\}$ denote the standard orthonormal
basis for the scalar product space ${\bf R}^{n+3}_2$ with metric $\langle \  ,\  \rangle$.  Here $e_1$
and $e_{n+3}$ are timelike and the rest are spacelike.

We now define a correspondence between the points on $Q^{n+1}$ and the set of oriented hyperspheres,
oriented hyperplanes
and point spheres in ${\bf R}^n \cup \{ \infty\}$.  Suppose that $x$ is any point on the 
quadric with homogeneous coordinate $x_{n+3} \neq 0$.  Then $x$ can be represented by a vector of the form
$(\zeta, 1)$, where the Lorentz scalar product $(\zeta, \zeta) = 1$.  Suppose first that
$\zeta_1 + \zeta_2 \neq 0$.  Then in M\"{o}bius geometry $[\zeta]$ represents a sphere in ${\bf R}^n$.  If as in
equation (\ref{eq:1.2.4}), we represent $[\zeta]$ by a vector of the form
\begin{displaymath}
\xi = \left( \frac{1+p \cdot p - r^2}{2}, \frac{1- p \cdot p +r^2}{2},p \right),
\end{displaymath}
then $(\xi, \xi) = r^2$.  Thus $\zeta$ must be one of the vectors $\pm \xi/r$.  In ${\bf P}^{n+2}$, we have
\begin{displaymath}
[(\zeta, 1)] = [(\pm \xi / r, 1)] = [(\xi, \pm r)].
\end{displaymath}
We can interpret the last coordinate as a signed radius of the sphere with center $p$ and unsigned
radius $r > 0$.  In order to interpret this geometrically, we adopt the convention that a positive signed radius 
corresponds to the orientation of the sphere
determined by the inward field
of unit normals, and a negative signed radius corresponds to the orientation given by the 
outward field of unit
normals.  Hence, the two orientations of the sphere in ${\bf R}^n$ with center $p$ and unsigned radius $r > 0$ are
represented by the two projective points,
\begin{equation}
\label{eq:1.3.2}
\left[ \left( \frac{1+p \cdot p - r^2}{2}, \frac{1- p \cdot p +r^2}{2},p, \pm r \right) \right]
\end{equation}
in $Q^{n+1}$.  Next if $\zeta_1 + \zeta_2 = 0$, then $[\zeta]$ represents a 
hyperplane in ${\bf R}^n$,
as in equation (\ref{eq:1.2.6}).  For $\zeta = (h,-h, N)$, with $|N| = 1$, we have $(\zeta, \zeta) = 1$.  Then the two
projective points on $Q^{n+1}$ induced by $\zeta$ and $- \zeta$ are
\begin{equation}
\label{eq:1.3.3}
[(h, -h, N, \pm 1)].
\end{equation}
These represent the two orientations of the plane with equation $u \cdot N = h$.  We make the
convention that $[(h, -h, N, 1)]$ corresponds to the orientation given by the field of unit
normals $N$, while the orientation given by $-N$ corresponds to the point
$[(h, -h, N, -1)] = [(-h, h, -N, 1)]$.

Thus far we have determined a bijective correspondence between the set of points $x$ in $Q^{n+1}$ with
$x_{n+3} \neq 0$ and the set of all oriented spheres and planes in ${\bf R}^n$.  Suppose now that
$x_{n+3} = 0$, i.e., consider a point $[(z,0)]$, for $z \in {\bf R}^{n+2}_1$.  Then
$\langle x, x \rangle = (z, z) = 0$, and $[z] \in {\bf P}^{n+1}$ is simply a point of the 
M\"{o}bius sphere $\Sigma$.  Thus we have the following bijective correspondence between objects
in Euclidean space and points on the Lie quadric:

\begin{equation}
\label{eq:1.3.4}
\begin{array}{cc}
{\rm {\bf Euclidean}} & {\rm {\bf Lie}} \\
 & \\
\mbox{{\rm points:} }u \in {\bf R}^n & \left[ \left( \frac{1+u \cdot u}{2},\frac{1-u \cdot u}{2},u,0 \right) 
\right]  \\
 & \\
\infty & [(1,-1,0,0)]\\
 & \\
\mbox{{\rm spheres: center} $p$, {\rm signed radius} $r$} & \left[ \left( \frac{1+p \cdot p - r^2}{2}, 
\frac{1- p \cdot p +r^2} {2},p, r \right) \right] \\
 & \\ 
\mbox{{\rm planes:} $u \cdot N = h$, {\rm unit normal} $N$} & [(h,-h,N,1)] 
\end{array}
\end{equation}

\vspace{.25in}

In Lie sphere geometry, points are considered to be spheres of radius zero, or point spheres.
From now on, we will use the term {\em Lie sphere}
or simply ``sphere'' to denote an oriented
sphere, an oriented plane or a point sphere in ${\bf R}^n \cup \{ \infty\}$.  We will refer to the
coordinates on the right side of equation (\ref{eq:1.3.4}) as the {\em Lie coordinates} 
of the corresponding
point, sphere or plane.  In the case of ${\bf R}^2$ and ${\bf R}^3$, respectively, these coordinates
were classically called {\em pentaspherical} and {\em hexaspherical} coordinates (see \cite{Bl}).  

It is useful to have formulas to convert Lie coordinates back into Cartesian equations for the corresponding
Euclidean object.  Suppose first that $[x]$ is a point on the Lie quadric with $x_1 + x_2 \neq 0$.
Then $x = \rho y$, for some $\rho \neq 0$, where $y$ is one of the standard forms on the right side of the table 
(\ref{eq:1.3.4}) above.
From the table, we see that $y_1 + y_2 = 1$, for all proper points and all spheres.  Hence if we divide $x$
by $x_1 + x_2$, the new vector will be in standard form, and we can read off the corresponding Euclidean
object from the table.  In particular, if $x_{n+3} = 0$, then $[x]$ represents the point $u = (u_3,\ldots,u_{n+2})$ where
\begin{equation}
\label{eq:1.3.5}
u_i = x_i / (x_1 + x_2),\quad 3 \leq i \leq n+2.
\end{equation}
If $x_{n+3} \neq 0$, then $[x]$ represents the sphere with center $p = (p_3,\ldots,p_{n+2})$ and signed
radius $r$ given by
\begin{equation}
\label{eq:1.3.6}
p_i = x_i / (x_1 + x_2),\quad 3 \leq i \leq n+2; \quad r = x_{n+3} / (x_1 + x_2).
\end{equation}
Finally, suppose that $x_1 + x_2 = 0$. If $x_{n+3} = 0$, then the equation $\langle x,x \rangle = 0$
forces $x_i$ to be zero for $3 \leq i \leq n+2$.  Thus $[x] = [(1,-1,0,\ldots,0)]$, the
improper point.  If $x_{n+3} \neq 0$, we divide $x$ by $x_{n+3}$ to make the last coordinate 1.
Then if we set $N = (N_3,\ldots,N_{n+2})$ and $h$ according to
\begin{equation}
\label{eq:1.3.7}
N_i = x_i/ x_{n+3},\quad 3 \leq i \leq n+2;\quad h = x_1/x_{n+3},
\end{equation}
the conditions $\langle x,x \rangle = 0$ and $x_1 + x_2 = 0$ force $N$ to have unit length.  Thus
$[x]$ corresponds to the hyperplane $u \cdot N = h$, with unit normal $N$ and $h$ as in equation (\ref{eq:1.3.7}).

\subsection{Oriented hyperspheres in $S^n$}
\label{sec:1.4} 
In some ways it is simpler to use the sphere $S^n$ 
rather than ${\bf R}^n$ as the base space for the
study of M\"{o}bius or Lie sphere geometry.  This avoids the use of stereographic projection 
and the need to refer to an improper point
or to distinguish between spheres and planes.  Furthermore, the correspondence
in the table in equation (\ref{eq:1.3.4}) can be reduced to a single formula (\ref{eq:1.4.4}) below.

As in Section \ref{sec:1.2}, we consider $S^n$ to be the unit sphere in ${\bf R}^{n+1}$, and then embed
${\bf R}^{n+1}$ into ${\bf P}^{n+1}$ by the canonical embedding $\phi$.  Then $\phi (S^n)$ is the
M\"{o}bius sphere $\Sigma$, given by the equation $(z,z) = 0$ in homogeneous coordinates.
First we find the M\"{o}bius equation for the unoriented hypersphere in $S^n$ with center $p \in S^n$
and spherical radius 
$\rho$, for $0 < \rho < \pi$.  This hypersphere is the intersection of $S^n$ with the hyperplane
in ${\bf R}^{n+1}$ given by the equation 
\begin{equation}
\label{eq:1.4.1}
p \cdot y = \cos \rho, \quad 0 < \rho < \pi.
\end{equation}
Let $[z] = \phi(y) = [(1,y)]$.  Then
\begin{displaymath}
p \cdot y = \frac{-(z,(0,p))}{(z,e_1)}.
\end{displaymath}
Thus equation (\ref{eq:1.4.1}) can be rewritten as
\begin{equation}
\label{eq:1.4.2}
(z,(\cos \rho,p)) = 0.
\end{equation}
Therefore, a point $y \in S^n$ is on the hyperplane
determined by equation (\ref{eq:1.4.1}) if and only if $\phi (y)$
lies on the polar hyperplane in ${\bf P}^{n+1}$ of the point
\begin{equation}
\label{eq:1.4.3}
[\xi] = [(\cos \rho,p)].
\end{equation}
To obtain the two oriented spheres determined by equation (\ref{eq:1.4.1}) note that
\begin{displaymath}
(\xi, \xi ) = - \cos^2 \rho + 1 = \sin^2 \rho.
\end{displaymath}
Noting that $\sin \rho \neq 0$, we let $\zeta = \pm \xi / \sin \rho$.  Then the point $[(\zeta, 1)]$
is on the quadric $Q^{n+1}$, and
\begin{displaymath}
[(\zeta, 1)] = [(\xi, \pm \sin \rho)] = [(\cos \rho, p, \pm \sin \rho)].
\end{displaymath}
We can incorporate the sign of the last coordinate into the radius and thereby arrange that the 
oriented sphere $S$ with 
signed radius $\rho \neq 0, - \pi < \rho < \pi$, and center $p$ corresponds to a point in $Q^{n+1}$
as follows:
\begin{equation}
\label{eq:1.4.4}
S \longleftrightarrow [(\cos \rho, p, \sin \rho)].
\end{equation}
The formula still makes sense if the radius $\rho = 0$, in which case it yields the point sphere
$[(1, p, 0)]$.  This one formula (\ref{eq:1.4.4}) plays the role of all the formulas given in equation (\ref{eq:1.3.4})
in the preceding section for the Euclidean case.

As in the Euclidean case, the orientation of a sphere $S$ in $S^n$ is
determined by a choice of unit normal field to $S$ in $S^n$.  Geometrically, we take the positive radius in 
(\ref{eq:1.4.4}) to correspond to the field of unit normals which are tangent vectors to geodesics from $-p$ to $p$.
Each oriented sphere can be considered in two ways, with center $p$ and signed radius $\rho, - \pi < \rho < \pi$,
or with center $-p$ and the appropriate signed radius $\rho \pm \pi$.

Given a point $[x]$ in the quadric $Q^{n+1}$, we now determine the corresponding hypersphere in $S^n$.
Multiplying by $-1$, if necessary, we may assume that the first homogeneous coordinate $x_1$ of $x$
satisfies $x_1 \geq 0$.  If $x_1 > 0$, then we see from (\ref{eq:1.4.4}) that the center $p$ and signed radius
$\rho, - \pi/2 < \rho < \pi/2$, satisfy
\begin{equation}
\label{eq:1.4.5}
\tan \rho = x_{n+3}/x_1, \quad p = (x_2, \ldots, x_{n+2})/ (x_1^2 + x_{n+3}^2)^{1/2}.
\end{equation}
If $x_1 = 0$, then $x_{n+3} \neq 0$, so we can divide by $x_{n+3}$ to obtain a point of the form
$(0,p,1)$.  This corresponds to the oriented hypersphere with center $p$ and signed radius 
$\pi / 2$, which
is a great sphere in $S^n$.  

One can construct the space of oriented hyperspheres in hyperbolic space $H^n$ in a similar way
(see \cite[p. 18]{Cec1}).

\subsection{Oriented contact of spheres}
\label{sec:1.5}
In M\"{o}bius geometry, the principal geometric quantity is the angle. In Lie sphere geometry,
the corresponding central concept is that of oriented contact of spheres.  
(See \cite[pp. 19--23]{Cec1} for more detail and proofs of the results mentioned in this section.)

Two oriented spheres
$S_1$ and $S_2$ in ${\bf R}^n$ are in {\em oriented contact}
if they are tangent to each other, and they
have the same orientation at the point of contact. 

If $p_1$ and $p_2$ are the respective centers of $S_1$ and $S_2$, and $r_1$ and $r_2$ are their respective
signed radii, then the analytic condition for oriented contact is
\begin{equation}
\label{eq:1.5.1}
|p_1 - p_2| = |r_1 - r_2|.
\end{equation}
An oriented sphere $S$ with center $p$ and signed radius $r$ is in oriented contact with an oriented
hyperplane $\pi$ with unit normal $N$ and equation $u \cdot N = h$, if $\pi$ is tangent to $S$ and their 
orientations agree at the point of contact.  Analytically, this is just the equation
\begin{equation}
\label{eq:1.5.2}
p \cdot N = r + h.
\end{equation}
Two oriented planes $\pi_1$ and $\pi_2$ are in oriented contact if their unit normals $N_1$ and $N_2$ are
the same.  Two such planes can be thought of as two oriented spheres in oriented contact at the improper point.

A proper point $u$ in ${\bf R}^n$ is in oriented contact with a sphere or plane if it lies on the sphere or plane.  Finally,
the improper point is in oriented contact with each plane, since it lies on each plane.

Suppose that $S_1$ and $S_2$ are two Lie spheres which are represented in the standard form given in 
equation (\ref{eq:1.3.4}) by $[k_1]$ and $[k_2]$.  One can check directly that in all cases, the analytic
condition for oriented contact is equivalent to the equation
\begin{equation}
\label{eq:1.5.3}
\langle k_1, k_2 \rangle = 0.
\end{equation}

It follows from the linear algebra of indefinite scalar product spaces
that the Lie quadric contains projective lines but 
no linear subspaces of ${\bf P}^{n+2}$ of higher dimension (see \cite[p. 21]{Cec1}).
The set of oriented spheres in ${\bf R}^n$ corresponding
to the points on a line on $Q^{n+1}$ forms a so-called {\em parabolic pencil} 
of spheres. 

The key result in establishing the relationship between the points on a line in $Q^{n+1}$ and the corresponding 
parabolic pencil of spheres in ${\bf R}^n$ is the following (see \cite[p. 22]{Cec1}).
\begin{theorem}
\label{thm:1.5.4} 
{\rm (a)} The line in ${\bf P}^{n+2}$ determined by two points $[k_1]$ and $[k_2]$ of $Q^{n+1}$
lies on $Q^{n+1}$ if and only if the the spheres corresponding to $[k_1]$ and $[k_2]$ are in
oriented contact, i.e., $\langle k_1, k_2 \rangle = 0.$\\
{\rm (b)} If the line $[k_1,k_2]$ lies on $Q^{n+1}$, then the parabolic pencil of spheres in ${\bf R}^n$ corresponding
to points on $[k_1,k_2]$ is precisely the set of all spheres in oriented contact with both $[k_1]$ and $[k_2]$.
\end{theorem}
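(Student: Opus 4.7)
The plan is to exploit the bilinear expansion of $\langle \,\cdot\,,\,\cdot\,\rangle$ on a general point of the line together with the fact (already recorded just before the theorem statement) that the Lie quadric $Q^{n+1}$ contains projective lines but no higher-dimensional projective linear subspaces. Equivalently, the Lie scalar product on ${\bf R}^{n+3}_2$ has signature $(n+1,2)$, so its maximal totally isotropic subspaces have dimension $2$. Once this bound is invoked, everything else reduces to routine bilinear algebra.

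For part (a), I parameterize a general point of the line through $[k_1]$ and $[k_2]$ as $[sk_1+tk_2]$ and compute
\begin{equation*}
\langle sk_1+tk_2,\,sk_1+tk_2\rangle \;=\; s^2\langle k_1,k_1\rangle \,+\, 2st\,\langle k_1,k_2\rangle \,+\, t^2\langle k_2,k_2\rangle.
\end{equation*}
Since $[k_1],[k_2]\in Q^{n+1}$, the outer terms vanish, leaving $2st\,\langle k_1,k_2\rangle$. This quadratic in $(s,t)$ vanishes identically if and only if $\langle k_1,k_2\rangle=0$, which is exactly the oriented-contact condition from equation (\ref{eq:1.5.3}). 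This proves (a).

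For part (b), assume the line $[k_1,k_2]$ lies on $Q^{n+1}$, so by (a) we have $\langle k_1,k_2\rangle=0$ in addition to $\langle k_i,k_i\rangle=0$. The easy inclusion is immediate: for any point $[k]=[sk_1+tk_2]$ on the line,
\begin{equation*}
\langle k,k_i\rangle \;=\; s\langle k_1,k_i\rangle + t\langle k_2,k_i\rangle \;=\; 0, \qquad i=1,2,
\end{equation*}
so by (\ref{eq:1.5.3}) the sphere $[k]$ is in oriented contact with both $[k_1]$ and $[k_2]$. For the reverse inclusion, suppose $[k]\in Q^{n+1}$ satisfies $\langle k,k_1\rangle=\langle k,k_2\rangle=0$. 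Combined with $\langle k,k\rangle=0$ and the three vanishings among $k_1,k_2$, the subspace $\mathrm{span}(k_1,k_2,k)\subset {\bf R}^{n+3}_2$ is totally isotropic. By the dimension bound on maximal isotropic subspaces of a scalar product of signature $(n+1,2)$, this subspace has dimension at most $2$, forcing $k\in\mathrm{span}(k_1,k_2)$; that is, $[k]$ lies on the projective line $[k_1,k_2]$.

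The main obstacle is the reverse inclusion of (b). The forward direction and all of (a) are immediate bilinear manipulations, but the converse in (b) genuinely requires the signature fact for the Lie metric, since one must rule out the existence of a spurious Lie sphere orthogonal to both $[k_1]$ and $[k_2]$ but off the line. Since this signature fact is already available in the paper, the argument is essentially self-contained, and I would carry out the two parts in the order (a) then (b) as above.
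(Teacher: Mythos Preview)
Your proof is correct. The paper itself does not supply a proof of this theorem; it defers to \cite[p.~22]{Cec1}. Your argument is exactly the standard one found there: part (a) via the bilinear expansion $\langle sk_1+tk_2,\,sk_1+tk_2\rangle = 2st\langle k_1,k_2\rangle$, and part (b) via the bound $\dim W \le 2$ for totally isotropic subspaces $W\subset {\bf R}^{n+3}_2$, which is the linear-algebra fact already cited in the paper just before the theorem (``the Lie quadric contains projective lines but no linear subspaces of ${\bf P}^{n+2}$ of higher dimension''). Nothing is missing.
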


Given any timelike point 
$[z]$ in ${\bf P}^{n+2}$,
the scalar product $\langle \ ,\  \rangle$ has signature $(n+1,1)$ on $z^\perp$. Hence, $z^\perp$ intersects
$Q^{n+1}$ in a M\"{o}bius space.  
We now show that any line on the quadric intersects such a M\"{o}bius space
at exactly one point.
\begin{corollary}
\label{cor:1.5.5} 
Let $[z]$ be a timelike point in ${\bf P}^{n+2}$ and $\ell$ a line that lies on $Q^{n+1}$.  Then $\ell$
intersects $z^\perp$ at exactly one point.
\end{corollary}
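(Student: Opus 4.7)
The plan is to reduce the statement to a linear-algebra claim about a $2$-dimensional totally isotropic subspace meeting the hyperplane $z^{\perp}$.

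First I would parametrize: pick two distinct points $[k_1], [k_2]$ on $\ell$, so that $\ell = \{[sk_1+tk_2] : [s:t] \in {\bf P}^1\}$. By Theorem~\ref{thm:1.5.4}(a), the hypothesis $\ell \subset Q^{n+1}$ forces $\langle k_1,k_1\rangle = \langle k_2,k_2\rangle = \langle k_1,k_2\rangle = 0$, so the $2$-dimensional subspace $L = \mathrm{span}(k_1,k_2) \subset {\bf R}^{n+3}_2$ is totally isotropic. A point $[sk_1+tk_2]$ of $\ell$ lies in $z^{\perp}$ exactly when
\begin{equation*}
s\,\langle k_1,z\rangle + t\,\langle k_2,z\rangle = 0,
\end{equation*}
a single homogeneous linear equation in $(s,t)$. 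If the coefficient vector $(\langle k_1,z\rangle,\langle k_2,z\rangle)$ is nonzero, this equation has a unique projective solution, giving exactly one intersection point.

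The remaining (and only non-formal) step is to rule out the degenerate case where both $\langle k_1,z\rangle$ and $\langle k_2,z\rangle$ vanish. In that situation $L \subset z^{\perp}$, so $z^{\perp}$ would contain a $2$-dimensional totally isotropic subspace. Here I invoke the hypothesis that $[z]$ is timelike: the paper has already noted that $\langle\,,\,\rangle$ then restricts to signature $(n+1,1)$ on $z^{\perp}$, i.e.\ $z^{\perp}$ is a Lorentz space. The main obstacle — really the only nontrivial point — is to observe that a Lorentz scalar product space admits no totally isotropic subspace of dimension greater than $1$, since the maximal dimension of a totally isotropic subspace of a scalar product space of signature $(p,q)$ equals $\min(p,q)$. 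This yields the required contradiction.

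Putting the two steps together, the linear functional $v \mapsto \langle v,z\rangle$ on $L$ is nonzero, so its kernel is exactly $1$-dimensional, which corresponds to a single point of ${\bf P}^{n+2}$. Hence $\ell \cap z^{\perp}$ consists of exactly one point, as claimed.
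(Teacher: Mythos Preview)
Your argument is correct and follows essentially the same route as the paper: both proofs reduce to showing that the line cannot be entirely contained in the hyperplane $z^{\perp}$, and both rule this out by observing that $z^{\perp}$, having Lorentz signature $(n+1,1)$, cannot contain the $2$-dimensional totally isotropic subspace corresponding to $\ell$. Your version simply unpacks the projective-geometry step (a line meets a hyperplane in at least one point) more explicitly via the linear equation in $(s,t)$.
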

\begin{proof}
Any line in projective space intersects a hyperplane in at least one point.  We simply must show that 
$\ell$ is not contained in $z^\perp$.  This follows from the fact that $\langle \ ,\  \rangle$
has signature $(n+1,1)$ on $z^\perp$, and therefore $z^\perp$ cannot contain the 2-dimensional lightlike
vector space that projects to $\ell$.
\end{proof}

As a consequence, we obtain the following corollary.
\begin{corollary}
\label{cor:1.5.6} 
Every parabolic pencil
contains exactly one point sphere. Furthermore, if the point sphere is a proper point,
then the pencil contains exactly one plane.
\end{corollary}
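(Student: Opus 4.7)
The plan is to reduce both claims to intersections of the line $\ell$ (corresponding to the pencil) with two specific hyperplanes in ${\bf P}^{n+2}$, one cut out by a timelike vector (to which Corollary 1.5.5 applies directly) and the other cut out by a lightlike vector (where a small extra argument is needed).

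For the first assertion, I would read off from the table in (\ref{eq:1.3.4}) that a point $[x]$ of $Q^{n+1}$ is a point sphere (either a proper point or the improper point $\infty$) if and only if $x_{n+3} = 0$. Since $\langle x, e_{n+3}\rangle = -x_{n+3}$, this is exactly the condition $[x] \in e_{n+3}^{\perp}$. The vector $e_{n+3}$ is timelike with respect to $\langle\ ,\ \rangle$, so Corollary \ref{cor:1.5.5} applied with $z = e_{n+3}$ yields that $\ell \cap e_{n+3}^{\perp}$ is a single point; this gives exactly one point sphere on the pencil.

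For the second assertion, let $\omega = (1,-1,0,\ldots,0) \in {\bf R}^{n+3}_2$, which represents the improper point (the point sphere at $\infty$); note that $\omega$ is lightlike, so Corollary \ref{cor:1.5.5} does not apply directly. A direct computation gives $\langle x, \omega \rangle = -(x_1+x_2)$, so by the table (\ref{eq:1.3.4}) the planes on $\ell$ are exactly the points of $\ell \cap \omega^{\perp}$ with nonzero last coordinate. I would then argue $\ell \not\subset \omega^{\perp}$ under the hypothesis that the unique point sphere on $\ell$ is a proper point: if $\ell$ lay in $\omega^{\perp}$, then the unique point sphere of $\ell$ would satisfy simultaneously $x_{n+3}=0$, $x_1+x_2=0$, and $\langle x,x\rangle=0$, which forces $x_3 = \cdots = x_{n+2} = 0$ and hence $[x] = [\omega] = \infty$, contradicting the assumption. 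Therefore $\ell \cap \omega^{\perp}$ is a single projective point $[k]$.

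Finally I would verify that this unique $[k]$ is genuinely a plane, i.e., that $k_{n+3} \neq 0$. If instead $k_{n+3} = 0$, then $[k]$ would be the point sphere on $\ell$ from part one, and since that point sphere is assumed proper, it satisfies $k_1 + k_2 \neq 0$, contradicting $[k] \in \omega^{\perp}$. Thus $k_{n+3} \neq 0$, and the pencil contains exactly one plane. The only subtle step in the argument is the treatment of the lightlike vector $\omega$, since Corollary \ref{cor:1.5.5} is unavailable and one must rule out $\ell \subset \omega^{\perp}$ by hand using the given information that the point sphere on $\ell$ is proper; this is where the hypothesis of the second sentence is actually used.
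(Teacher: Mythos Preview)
Your proof is correct. The first assertion is handled exactly as in the paper: point spheres are $Q^{n+1}\cap e_{n+3}^{\perp}$, and Corollary~\ref{cor:1.5.5} with the timelike vector $e_{n+3}$ gives the unique intersection.

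For the second assertion, both you and the paper reduce to showing $\ell\not\subset(e_1-e_2)^{\perp}$, but the arguments diverge. The paper argues structurally: if $\ell\subset(e_1-e_2)^{\perp}$ and $[e_1-e_2]\notin\ell$, then the projective $2$-plane spanned by $\ell$ and $[e_1-e_2]$ would be totally isotropic and hence lie on $Q^{n+1}$, contradicting the fact that the Lie quadric contains lines but no higher-dimensional linear subspaces. You instead argue by coordinates: if $\ell\subset(e_1-e_2)^{\perp}$, then the unique point sphere on $\ell$ satisfies $x_{n+3}=0$, $x_1+x_2=0$, and $\langle x,x\rangle=0$ simultaneously, forcing it to be the improper point. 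Your route is more elementary in that it avoids invoking the dimension bound on isotropic subspaces of ${\bf R}^{n+3}_2$; the paper's route, on the other hand, exhibits the geometric reason ($[e_1-e_2]$ must actually lie on $\ell$) and generalizes readily to other lightlike poles. You also spell out explicitly why the unique point of $\ell\cap(e_1-e_2)^{\perp}$ has $k_{n+3}\neq 0$, a step the paper's proof leaves to the reader.
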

\begin{proof}
The point spheres are precisely the points of intersection of  $Q^{n+1}$ with $e_{n+3}^\perp$
Thus each parabolic pencil contains exactly one point sphere by Corollary \ref{cor:1.5.5}. 
To prove the second claim, note that the oriented hyperplanes in ${\bf R}^n$
correspond to the points in the intersection of $Q^{n+1}$ with $(e_1 - e_2)^\perp$.  The line $\ell$ on
the quadric corresponding to the given parabolic pencil intersects this hyperplane at exactly one point
unless $\ell$ is contained in  $(e_1 - e_2)^\perp$ .  But $\ell$ is contained in $(e_1 - e_2)^\perp$ if and only if
the improper point $[e_1 - e_2]$ is in $\ell^\perp$.  This implies that the point 
$[e_1 - e_2]$ is on $\ell$.  Otherwise, the 2-dimensional linear lightlike subspace of ${\bf P}^{n+2}$ 
spanned by $[e_1 - e_2]$ and $\ell$ lies on $Q^{n+1}$, which is impossible.
Hence, if the point sphere of the pencil is not the improper point, then the
pencil contains exactly one hyperplane.
\end{proof}

By Corollary \ref{cor:1.5.6} and Theorem \ref{thm:1.5.4}, we see that if the point sphere in a parabolic
pencil is a proper point $p$ in ${\bf R}^n$, then the pencil consists precisely of all spheres in
oriented contact with a certain oriented plane $\pi$ at $p$.  Thus, one can identify the parabolic pencil
with the point $(p,N)$ in the unit tangent bundle 
to ${\bf R}^n$, where $N$ is the unit normal to the
oriented plane $\pi$.  If the point sphere of the pencil is the improper point, then the pencil must
consist entirely of planes.  
Since these planes are all in oriented contact, they all have the same
unit normal $N$.  Thus the pencil can be identified with the point $(\infty, N)$ in the unit tangent
bundle to ${\bf R}^n \cup \{ \infty \} = S^n$.

It is also useful to have this correspondence between parabolic pencils and elements of the unit tangent bundle 
$T_1S^n$ expressed in terms of the spherical metric 
on $S^n$.  Suppose that $\ell$ is a line on the
quadric.  From Corollary \ref{cor:1.5.5} and equation (\ref{eq:1.4.4}), we see that $\ell$ intersects both
$e_1^\perp$ and $e_{n+3}^\perp$ at exactly one point.  So the corresponding 
parabolic pencil contains exactly
one point sphere and one great sphere, represented respectively by the points,
\begin{displaymath}
[k_1] = [(1,p,0)], \quad [k_2] = [(0,\xi,1)].
\end{displaymath}
The fact that $\langle k_1, k_2 \rangle = 0$ is equivalent to the condition $p \cdot \xi = 0$, i.e., $\xi$
is tangent to $S^n$ at $p$.  Hence the parabolic pencil of spheres corresponding to $\ell$ can
be identified with the point $(p, \xi)$ in $T_1S^n$.  The points on the line $\ell$ can be parametrized as
\begin{displaymath}
[K_t] = [\cos t \ k_1 + \sin t \ k_2] = [(\cos t, \cos t \ p + \sin t \ \xi, \sin t)].
\end{displaymath}
From equation (\ref{eq:1.4.4}), we see that $[K_t]$ corresponds to the sphere in $S^n$ with center
\begin{equation}
\label{eq:1.5.4}
p_t = \cos t \ p + \sin t \ \xi,
\end{equation}  
and signed radius $t$.  
These are precisely the spheres through $p$ in oriented contact with the great sphere
corresponding to $[k_2]$.  Their centers lie along the geodesic in $S^n$ with initial point $p$ and initial velocity
vector $\xi$.

\subsection{Lie sphere transformations}
\label{sec:2.1}
A {\em Lie sphere transformation} 
is a projective transformation of ${\bf P}^{n+2}$ which takes $Q^{n+1}$
to itself.  In terms of the geometry of ${\bf R}^n$, a Lie sphere transformation maps 
Lie spheres to Lie spheres. 
Furthermore, since it
is projective, a Lie sphere transformation maps lines on $Q^{n+1}$ to lines on $Q^{n+1}$.  Thus, it preserves
oriented contact of spheres in ${\bf R}^n$ or $S^n$.  

Pinkall \cite{P4} proved
the so-called 
``Fundamental Theorem of Lie sphere geometry,'' 
which states that any line preserving diffeomorphism 
of $Q^{n+1}$ is the restriction to $Q^{n+1}$
of a projective transformation. Thus, a transformation of the space of oriented spheres which preserves
oriented contact must be a Lie sphere transformation.  We will not give the proof here, but refer the reader
to Pinkall's paper \cite[p. 431]{P4} or the book \cite[p. 28]{Cec1}. 

Recall that a linear transformation $A \in GL(n+1)$ induces a 
projective transformation $P(A)$ on
${\bf P}^n$ defined by $P(A) [x] = [Ax]$.  The map $P$ is a homomorphism of $GL(n+1)$ onto the group
$PGL(n)$ of projective transformations 
of ${\bf P}^n$.  It is well known (see, for example, Samuel \cite[p. 6]{Sam})
that the kernel of $P$ is the group of all nonzero scalar multiples of the identity transformation $I$.

The group $G$ of Lie sphere transformations
is isomorphic to the group $O(n+1,2)/\{\pm I\}$, where $O(n+1,2)$ is the group of 
orthogonal transformations of ${\bf R}^{n+3}_2$.  This follows
immediately from the following theorem.
Here we let $\langle \ ,\  \rangle$ denote the scalar product on ${\bf R}^n_k$ (see \cite[p. 26]{Cec1} for a proof).

\begin{theorem}
\label{thm:2.1.1} 
Let $A$ be a nonsingular linear transformation on the 
indefinite scalar product space ${\bf R}^n_k, 1 \leq k \leq n-1$,
such that $A$ takes lightlike vectors to lightlike vectors.\\
{\rm (a)} Then there is a nonzero constant $\lambda$ such that $\langle Av, Aw \rangle = \lambda \langle v,w \rangle$
for all $v,w$ in ${\bf R}^n_k$.\\
{\rm (b)} Furthermore, if $k \neq n-k$, then $\lambda > 0$.
\end{theorem}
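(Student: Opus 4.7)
The plan is to reduce everything to a computation on a fixed orthonormal basis $\{e_1,\ldots,e_n\}$ of ${\bf R}^n_k$ with $\langle e_i,e_i\rangle=\epsilon_i=\pm 1$, exploiting that $A$ sends the (abundant) lightlike vectors we can write down in this basis to lightlike vectors, and then extending by bilinearity.

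First I would handle mixed-signature pairs. Reindex so $\epsilon_1=-1$ and $\epsilon_2=+1$; then $e_1\pm e_2$ are both lightlike. Applying $\langle A\cdot,A\cdot\rangle=0$ to both and adding/subtracting gives
\begin{equation*}
\langle Ae_1,Ae_1\rangle+\langle Ae_2,Ae_2\rangle=0,\qquad \langle Ae_1,Ae_2\rangle=0.
\end{equation*}
The same argument applied to an arbitrary timelike $e_i$ (resp.\ spacelike $e_j$) paired with a fixed opposite-sign basis vector shows that the quantity $\lambda:=\langle Ae_j,Ae_j\rangle$ is the same positive-index value for every spacelike $e_j$, that $\langle Ae_i,Ae_i\rangle=-\lambda$ for every timelike $e_i$, and that $\langle Ae_i,Ae_j\rangle=0$ whenever $\epsilon_i\epsilon_j=-1$. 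So on such opposite-sign pairs we already have $\langle Ae_i,Ae_j\rangle=\lambda\langle e_i,e_j\rangle$.

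Next I would handle same-sign pairs $e_i,e_j$ with $i\ne j$ using a three-term lightlike vector. If both are spacelike, pick a timelike $e_m$ and form $w=e_i+e_j+\sqrt 2\,e_m$; then $\langle w,w\rangle=1+1-2=0$, so $w$ is lightlike. Expanding $\langle Aw,Aw\rangle=0$ and substituting the values already computed leaves $2\langle Ae_i,Ae_j\rangle=0$, which is $\lambda\langle e_i,e_j\rangle$. The two-timelike case is symmetric. Thus $\langle Ae_i,Ae_j\rangle=\lambda\langle e_i,e_j\rangle$ for all $i,j$, and bilinearity gives (a) on all of ${\bf R}^n_k$. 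The constant $\lambda$ is nonzero because $A$ is nonsingular: if $\lambda=0$ then $\langle Av,Aw\rangle\equiv 0$, but $\langle\,,\rangle$ is nondegenerate and $A$ is onto, which is a contradiction.

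For (b), the bilinear form $\langle A\,\cdot\,,A\,\cdot\,\rangle$ is the pullback of $\langle\,,\rangle$ by the isomorphism $A$, so in the basis $v_i:=A^{-1}e_i$ its matrix is $\mathrm{diag}(\epsilon_1,\ldots,\epsilon_n)$; by Sylvester's law of inertia it has signature $(n-k,k)$. On the other hand $\lambda\langle\,,\rangle$ has signature $(n-k,k)$ if $\lambda>0$ and $(k,n-k)$ if $\lambda<0$. When $k\ne n-k$ these signatures differ, forcing $\lambda>0$. The main thing to get right is the bookkeeping in the three-vector lightlike test — that is where one must verify that all cross-terms already computed really do telescope to leave the single term $\langle Ae_i,Ae_j\rangle$ — but no step involves more than a direct expansion of $\langle Aw,Aw\rangle=0$.
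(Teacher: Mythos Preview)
Your argument is correct. The two-term lightlike test $e_i\pm e_j$ for opposite-sign pairs, followed by the three-term lightlike test $e_i+e_j+\sqrt{2}\,e_m$ for same-sign pairs, cleanly pins down all entries of the Gram matrix of $\{Ae_i\}$ as $\lambda\epsilon_i\delta_{ij}$; the nondegeneracy argument for $\lambda\ne 0$ and the Sylvester signature argument for (b) are also fine. One small point worth making explicit is that the three-vector step is vacuous when there is only one basis vector of a given sign (e.g.\ $k=1$ or $k=n-1$), but in those cases there simply are no same-sign pairs to check.

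As for comparison: the paper does not actually prove this theorem in the text---it cites \cite[p.~26]{Cec1} and moves on---so there is no in-paper argument to compare against. Your proof is the standard direct computation and would serve perfectly well in its place.
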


\begin{remark}
\label{rem:2.1.2}
{\em In the case $k = n-k$, conclusion (b) does not necessarily hold.  For example, suppose that 
$\{v_1,\ldots,v_k,w_1,\ldots,w_k\}$ is an orthonormal basis for ${\bf R}^{2k}_k$ with
$v_1,\ldots,v_k$ timelike and $w_1,\ldots,w_k$ spacelike.  Then
the linear map $T$ defined by 
$Tv_i = w_i, Tw_i = v_i$, for $1 \leq i \leq k$, preserves lightlike vectors, but the corresponding $\lambda$ equals $-1$.}
\end{remark}

From Theorem~\ref{thm:2.1.1} we immediately obtain the following corollary.

\begin{corollary}
\label{cor:2.1.3} 
{\rm (a)} The group $G$ of Lie sphere transformations is isomorphic to $O(n+1,2)/\{\pm I\}$.\\
{\rm (b)} The group $H$ of M\"{o}bius transformations is isomorphic to $O(n+1,1)/\{\pm I\}$.
\end{corollary}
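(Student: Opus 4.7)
My plan is to observe that both groups are defined as projective transformations preserving the projectivization of a lightlike cone, so Theorem~\ref{thm:2.1.1} can be used to lift any such element to an orthogonal transformation up to a positive scalar, after which the identification becomes a standard kernel computation for the projectivization map $P: GL \to PGL$.

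For part (a), I would start with an arbitrary $\alpha \in G$ and pick any $A \in GL(n+3)$ with $P(A) = \alpha$. By definition, $\alpha$ maps $Q^{n+1}$ to itself, and $Q^{n+1}$ is exactly the projectivization of the nonzero lightlike vectors of ${\bf R}^{n+3}_2$. Since $A$ is bijective, this forces $A$ to send lightlike vectors to lightlike vectors. Applying Theorem~\ref{thm:2.1.1}(a) with ambient dimension $n+3$ and $k=2$ gives a nonzero $\lambda$ with $\langle Av,Aw\rangle = \lambda\langle v,w\rangle$. Because $n\ge 2$ gives $n+1\ne 2$, part (b) of the theorem yields $\lambda>0$, so $B:=A/\sqrt{\lambda}$ lies in $O(n+1,2)$ and still satisfies $P(B)=\alpha$. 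Thus the restriction of $P$ to $O(n+1,2)$ is a surjective homomorphism onto $G$. Its kernel consists of $B\in O(n+1,2)$ that are scalar multiples $cI$ of the identity, and the orthogonality condition forces $c^2=1$, so the kernel is $\{\pm I\}$. The first isomorphism theorem then gives $G\cong O(n+1,2)/\{\pm I\}$.

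Part (b) is formally identical with ${\bf R}^{n+2}_1$ replacing ${\bf R}^{n+3}_2$ and the M\"obius sphere $\Sigma$ replacing $Q^{n+1}$: a M\"obius transformation is, by the definition given in Section~\ref{sec:1.2}, a projective transformation of ${\bf P}^{n+1}$ preserving the condition $(\eta,\eta)=0$, i.e.\ preserving the projectivized lightlike cone of ${\bf R}^{n+2}_1$. The same lifting argument works, now with $k=1$ and ambient dimension $n+2$, and $n\ge 2$ again ensures $n+1\ne 1$, so Theorem~\ref{thm:2.1.1}(b) guarantees $\lambda>0$. The kernel calculation is verbatim the same, yielding $H\cong O(n+1,1)/\{\pm I\}$.

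The only genuine subtlety is the positivity $\lambda>0$, without which the rescaling $A/\sqrt{\lambda}$ would not be real and the lift into the orthogonal group would fail. This is exactly the role of the hypothesis $k\ne n-k$ in Theorem~\ref{thm:2.1.1}(b), and Remark~\ref{rem:2.1.2} shows that the conclusion can actually fail in the equal-signature case, so the check cannot be skipped. Here both relevant signatures, $(n+1,2)$ and $(n+1,1)$ with $n\ge 2$, are safely unequal in their two entries, so nothing more is needed.
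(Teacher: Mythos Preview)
Your proof is correct and follows essentially the same approach as the paper: lift a representative $A\in GL$ of the given projective transformation, invoke Theorem~\ref{thm:2.1.1} to obtain $\lambda>0$, rescale by $1/\sqrt{\lambda}$ to land in the orthogonal group, and then compute the kernel of the restricted projectivization map. You are in fact slightly more explicit than the paper in verifying the signature hypothesis $k\ne n-k$ needed for positivity of $\lambda$, and in spelling out why the kernel is exactly $\{\pm I\}$.
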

\begin{proof}
(a) Suppose $\alpha = P(A)$ is a Lie sphere transformation.  By Theorem~\ref{thm:2.1.1}, we have
$\langle Av,Aw \rangle = \lambda \langle v,w \rangle$ for all $v,w$ in ${\bf R}^{n+3}_2$, where $\lambda$ is a positive
constant. Set $B$ equal to $A/\sqrt{\lambda}$.  Then $B$ is in $O(n+1,2)$ and $\alpha = P(B)$.  Thus, every Lie
sphere transformation can be represented by an orthogonal transformation.  Conversely, if $B \in O(n+1,2)$, then $P(B)$ is
clearly a Lie sphere transformation.  Now let $\Psi: O(n+1,2) \rightarrow G$ be the restriction of the homomorphism $P$
to $O(n+1,2)$.  Then $\Psi$ is surjective with kernel equal to the intersection of the kernel of $P$ with
$O(n+1,2)$, i.e., kernel $\Psi = \{\pm I\}$.\\
(b) This follows from Theorem~\ref{thm:2.1.1} in the same manner as (a) with the Lorentz metric being used instead
of the Lie metric.
\end{proof}

\begin{remark}
\label{rem:2.1.4}
On M\"{o}bius transformations in Lie sphere geometry.\\
{\em A M\"{o}bius transformation
is a transformation on the space of unoriented spheres, i.e., the space of projective
classes of spacelike 
vectors in ${\bf R}^{n+2}_1$.  Hence, each M\"{o}bius transformation naturally induces two
Lie sphere transformations on the space $Q^{n+1}$ of oriented spheres.  Specifically, if $A$ is in $O(n+1,1)$, then
we can extend $A$ to a transformation $B$ in $O(n+1,2)$ by setting $B = A$ on ${\bf R}^{n+2}_1$ and
$B(e_{n+3}) = e_{n+3}$.  In terms of matrix representation with respect to the standard orthonormal basis,
$B$ has the form
\begin{equation}
\label{eq:2.1.4}
B = \left[ \begin{array}{cc}A&0\\0&1\end{array}\right].
\end{equation}
Note that while $A$ and $-A$ induce the same M\"{o}bius transformation, the Lie transformation $P(B)$
is not the same as the Lie transformation $P(C)$ induced by the matrix
\begin{displaymath}
C = \left[ \begin{array}{cc}-A&0\\0&1\end{array}\right] \simeq \left[ \begin{array}{cc}A&0\\0&-1\end{array}\right],
\end{displaymath}
where $\simeq$ denotes equivalence as projective transformations.  Hence, the M\"{o}bius transformation $P(A) = P(-A)$
induces two Lie transformations, $P(B)$ and $P(C)$.  Note that $P(B) = \Gamma P(C)$, where $\Gamma$
is the {\em change of orientation transformation} represented in matrix form by
\begin{displaymath}
\Gamma = \left[ \begin{array}{cc}I&0\\0&-1\end{array}\right] \simeq \left[ \begin{array}{cc}-I&0\\0&1\end{array}\right].
\end{displaymath}
From equation (\ref{eq:1.3.4}), 
we see that $\Gamma$ has the effect of changing the orientation of
every oriented sphere or plane.}  
\end{remark}

\section{Submanifolds in Lie Sphere Geometry}
\label{chap:3}

In this section, we develop the framework necessary to study submanifolds 
of real space forms within the context of Lie sphere geometry. (See  \cite[pp. 51--64]{Cec1}) for more detail.)
For convenience, we will deal primarily with submanifolds of $S^n$, and thereby avoid the complications induced by the improper point associated to  ${\bf R}^n$.  This also makes it easier to treat the important class of isoparametric
hypersurfaces in spheres.  However, we will also consider submanifolds of ${\bf R}^n$, when it is more
convenient to do so.

\subsection{Legendre submanifolds}
\label{sec:3.1}

The manifold $\Lambda^{2n-1}$ of 
lines on the Lie quadric $Q^{n+1}$ has a 
contact structure, i.e., a
globally defined 1-form $\omega$ such that $\omega \wedge (d\omega)^{n-1} \neq 0$ on $\Lambda^{2n-1}$. 
The condition $\omega = 0$ defines 
a codimension one distribution $D$ on $\Lambda^{2n-1}$ that has integral submanifolds of dimension $n-1$, but none
of higher dimension (see, for example, \cite[p. 57]{Cec1}).  
Such an integral submanifold of maximal dimension is an immersion $\lambda:M^{n-1} \rightarrow \Lambda^{2n-1}$ satisfying the condition $\lambda^* \omega = 0$.  It is called a {\em Legendre submanifold}  \cite{Cec1} or a {\em Lie geometric hypersurface} \cite{P1}, \cite{P4}.  As we shall see, these Legendre submanifolds are natural generalizations of oriented hypersurfaces in $S^n$ or ${\bf R}^n$.  

Pinkall \cite{P4} (see also \cite[pp. 59--60]{Cec1}) showed that a smooth map 
 $\lambda:M^{n-1} \rightarrow \Lambda^{2n-1}$ 
 with $\lambda = [k_1, k_2]$, for smooth maps $k_1$ and
$k_2$ from $M^{n-1}$ into ${\bf R}^{n+3}_2$, determines 
a Legendre submanifold if and only
if $k_1$ and $k_2$ satisfy the following conditions, which we call {\em Pinkall's Conditions} (1)--(3)\label{PC}.\\

\noindent 
$(1)$ Scalar product conditions: 
For each $x \in M^{n-1}$, the vectors $k_1(x)$ and $k_2(x)$ are linearly independent
and 
\begin{displaymath}
\langle k_1,k_1 \rangle = 0, \quad \langle k_2, k_2 \rangle = 0, \quad \langle k_1, k_2 \rangle = 0.
\end{displaymath}
$(2)$ Immersion condition: 
There is no nonzero tangent vector $X$ at any point $x \in M^{n-1}$ such that
$dk_1(X)$ and $dk_2(X)$ are both in {\em Span} $\{k_1(x),k_2(x)\}$.\\

\noindent
$(3)$ Contact condition: $\langle dk_1(X), k_2 (x)\rangle = 0$ for all $X \in T_xM^{n-1}$, for all  $x \in M^{n-1}$.\\

\noindent
These conditions are preserved
under a reparametrization $\lambda = [\widetilde{k}_1, \widetilde{k}_2]$, where 
$\widetilde{k}_1 = \alpha  k_1 + \beta k_2$ and $\widetilde{k}_2 = \gamma k_1 + \delta k_2$, for smooth functions
$\alpha, \beta, \gamma, \delta$ on $M^{n-1}$ with $\alpha \delta - \beta \gamma \neq 0.$ 

Condition $(1)$ means that the line $[k_1, k_2]$ lies on $Q^{n+1}$ for all $x \in M^{n-1}$, and so $\lambda$ is a map into 
$\Lambda^{2n-1}$.  Condition $(2)$ means that the map $\lambda$
is an immersion. Condition $(3)$ is equivalent to the contact condition
$\lambda^*\omega = 0$.

An oriented hypersurface  $f:M^{n-1} \rightarrow S^n$ with field of unit normals
$\xi:M^{n-1} \rightarrow S^n$ naturally induces a Legendre submanifold defined by the two functions,  
\begin{equation}
\label{eq:3.3.1}
k_1(x) = (1, f(x), 0), \quad k_2(x) = (0,\xi (x), 1).
\end{equation} 
For each $x \in M^{n-1}$, $[k_1(x)]$ is the point sphere on the line $\lambda (x)$, and $[k_2(x)]$ is the great sphere
on $\lambda (x)$.
It is easy to check that the
pair $\{k_1, k_2\}$ satisfies the conditions (1)--(3) as follows.

Condition (1) is immediate since both
$f$ and $\xi$ are maps into $S^n$, and $\xi(x)$ is tangent to $S^n$ at $f(x)$ for each $x$ in $M^{n-1}$.
Condition (2) is satisfied since
\begin{displaymath}
dk_1(X)= (0,df(X),0),
\end{displaymath} 
for any vector $X \in T_xM^{n-1}$.  Since $f$ is an immersion, $df(X)\neq 0$ for a nonzero vector $X$, and thus 
$dk_1(X)$ is not in Span $\{k_1(x),k_2(x)\}$.  Finally, condition (3) is satisfied since for any vector $X \in T_xM^{n-1}$,
\begin{displaymath}
\langle dk_1(X), k_2(x) \rangle = df(X) \cdot \xi(x) = 0,
\end{displaymath} 
because $\xi$ is a field of unit normals to $f$.  So in this case, the contact condition $(3)$ just amounts to the fact that 
$\xi$ is a field of unit normals to $f$.
The map $\lambda$ is called the {\em Legendre lift}  of the oriented hypersurface $f$
with field of unit normals
$\xi$.

Next, we handle the case of a submanifold $\phi: V \rightarrow S^n$ of codimension  greater than one.  Let
$B^{n-1}$ be the unit normal bundle
of the submanifold $\phi$.  Then $B^{n-1}$ can be considered to be the
submanifold of $V \times S^n$ given by
\begin{displaymath}
B^{n-1} = \{ (x,\xi)| \phi(x) \cdot \xi = 0, \ d\phi(X) \cdot \xi = 0,\  \mbox{\rm for all }X \in T_xV\}.
\end{displaymath} 
The {\em Legendre lift}\label{Legendre-lift} of $\phi(V)$ 
is the map $\lambda: B^{n-1} \rightarrow \Lambda^{2n-1}$ defined by
\begin{equation}
\label{eq:3.3.2}
\lambda(x,\xi) = [k_1(x,\xi), k_2(x,\xi)],
\end{equation} 
where
\begin{equation}
\label{eq:3.3.3}
k_1(x,\xi) = (1, \phi(x), 0), \quad k_2(x,\xi) = (0, \xi, 1).
\end{equation} 
Geometrically, $\lambda(x,\xi)$ is the line on the quadric $Q^{n+1}$ corresponding to the 
parabolic pencil
of spheres in $S^n$ in oriented contact at the contact element $(\phi(x),\xi) \in T_1S^n$.  
 In this case, the point sphere map $[k_1]$
has constant rank equal to the dimension of $V$.

We now return to the case of a general Legendre submanifold $\lambda:M^{n-1} \rightarrow \Lambda^{2n-1}$.
 For each $x \in M^{n-1}$,  $\lambda(x)$ is a line on the 
quadric $Q^{n+1}$.  This line contains exactly one
point $[k_1(x)]$ corresponding to a point sphere in $S^n$ and one point $[k_2(x)]$ corresponding to a
great sphere in $S^n$.  The map $[k_1]$ from $M^{n-1}$ to $Q^{n+1}$ is called the 
{\em M\"{o}bius projection} or {\em point sphere map} 
of $\lambda$, and likewise, the map $[k_2]$ is called the 
{\em great sphere map}.

The homogeneous coordinates
of these points with respect to the standard basis are given by
\begin{equation}
\label{eq:3.2.1}
k_1(x) = (1, f(x), 0), \quad k_2(x) = (0,\xi (x), 1),
\end{equation}
where $f$ and $\xi$ are both smooth maps from $M^{n-1}$ to $S^n$ defined by formula (\ref{eq:3.2.1}).  The map
$f$ is called the {\em spherical projection} 
of $\lambda$, and $\xi$ is called the 
{\em spherical field of unit normals}.  
The maps $f$ and $\xi$ depend on the choice of orthonormal basis $\{e_1,\ldots,e_{n+3}\}$
for ${\bf R}^{n+3}_2$.  
For a general Legendre submanifold, the spherical projection $f$ is not necessarily an immersion, nor does it 
necessarily have constant rank. (See \cite[pp. 59--60]{Cec1} for more detail.)

If the range of the point sphere map $[k_1]$ does not contain the improper point $[(1,-1,0,\ldots,0)]$, then
$\lambda$ also determines a {\em Euclidean projection}, 
\begin{displaymath}
F:M^{n-1} \rightarrow {\bf R}^n,
\end{displaymath}
and a {\em Euclidean field of unit normals},
\begin{displaymath}
\eta:M^{n-1} \rightarrow S^{n-1} \subset {\bf R}^n.
\end{displaymath}
These are defined by the equation 
$\lambda = [Z_1, Z_2]$, where
\begin{equation}
\label{eq:3.3.11}
Z_1 = (1+F\cdot F, 1 - F \cdot F, 2F,0)/2, \quad Z_2 = (F \cdot \eta, - (F \cdot \eta), \eta, 1).
\end{equation} 
According to (\ref{eq:1.3.4}), $[Z_1(x)]$ corresponds to the unique point sphere in the parabolic pencil 
determined by $\lambda(x)$,
and $[Z_2(x)]$ corresponds to the unique plane in this pencil.  As in the spherical case, the smooth maps
$F$ and $\eta$ need not have constant rank.  (See \cite[pp. 63--64]{Cec1} for more detail.)

\subsection{Curvature spheres and Dupin hypersurfaces}
\label{sec:3.4}
To motivate the definition of a curvature sphere 
we consider the case of an oriented hypersurface
$f:M^{n-1} \rightarrow S^n$ with field of unit normals $\xi:M^{n-1} \rightarrow S^n$.  The 
{\em shape operator}
of $f$ at a point $x \in M^{n-1}$ is the symmetric linear transformation $A:T_xM^{n-1} \rightarrow T_xM^{n-1}$
defined by the equation
\begin{equation}
\label{eq:3.4.1}
df(AX) = - d\xi(X), \quad X \in T_xM^{n-1}.
\end{equation} 
Often we consider $f$ to be an embedding and suppress the mention of $f$.  Then we identify the tangent vector $X$ with
$df(X)$.  

The eigenvalues of $A$ are the 
{\em principal curvatures}, and the corresponding eigenvectors are
the {\em principal vectors}.  
We next recall the notion of a focal point of an immersion.  For each
real number $t$, define a map
\begin{displaymath}
f_t:M^{n-1} \rightarrow S^n,
\end{displaymath}
by
\begin{equation}
\label{eq:3.4.2}
f_t = \cos t \ f + \sin t \ \xi.
\end{equation} 
For each $x \in M^{n-1}$, the point $f_t(x)$ lies an oriented distance 
$t$ along the normal geodesic
to $f(M^{n-1})$ at $f(x)$.  A point $p = f_t(x)$ is called a 
{\em focal point} {\em of multiplicity} $m>0$
{\em of} $f$ {\em at} $x$ if the nullity of $df_t$ is equal to $m$ at $x$.    

The location of focal points is determined by the
principal curvatures.  Specifically, if $X \in T_xM^{n-1}$, then by equation (\ref{eq:3.4.1}) we have
\begin{equation}
\label{eq:3.4.3}
df_t(X) = \cos t \ df(X) + \sin t \ d\xi(X) = df(\cos t \ X - \sin t \ AX).
\end{equation} 
Thus, $df_t(X)$ equals zero for $X\neq0$ if and only if $\cot t$ is a principal curvature of $f$ at $x$,
and $X$ is a corresponding principal vector.  Hence, $p = f_t(x)$ is a focal point of $f$ at $x$ of
multiplicity $m$ if and only if $\cot t$ is a principal curvature of multiplicity $m$ at $x$.  Note
that each principal curvature 
\begin{displaymath}
\kappa = \cot t, \quad 0<t<\pi,
\end{displaymath}
produces two distinct antipodal focal points
on the normal geodesic with parameter values $t$ and $t+\pi$.  

The oriented hypersphere centered at a focal point
$p$ and in oriented contact with $f(M^{n-1})$ at $f(x)$ is called a 
{\em curvature sphere} of $f$ at $x$.  The
two antipodal focal points determined by $\kappa$ are the two centers of the corresponding curvature sphere.
Thus, the correspondence between principal curvatures and curvature spheres is bijective.  The multiplicity
of the curvature sphere is by definition equal to the multiplicity of the corresponding principal curvature.

We now consider these ideas as they apply to the 
Legendre lift of an oriented hypersurface $f$ with field of unit normals
$\xi$.  As in equation (\ref{eq:3.3.1}), we have $\lambda = [k_1, k_2]$, where
\begin{equation}
\label{eq:3.4.4}
k_1 = (1, f, 0), \quad k_2 = (0, \xi, 1).
\end{equation} 
For each $x \in M^{n-1}$, the points on the line $\lambda(x)$ can be parametrized as
\begin{equation}
\label{eq:3.4.5}
[K_t(x)] = [\cos t \ k_1(x) + \sin t \ k_2(x)] = [(\cos t,f_t(x),\sin t)],
\end{equation} 
where $f_t$ is given in equation (\ref{eq:3.4.2}).  By equation (\ref{eq:1.4.4}), the point
$[K_t(x)]$ in $Q^{n+1}$ corresponds to the oriented sphere in $S^n$ with center $f_t(x)$ and signed 
radius $t$.
This sphere is in oriented contact with the oriented hypersurface $f(M^{n-1})$ at $f(x)$.  Given a tangent vector
$X \in T_xM^{n-1}$, we have
\begin{equation}
\label{eq:3.4.6}
dK_t(X)= (0,df_t(X),0).
\end{equation} 
Thus, $dK_t(X)= (0,0,0)$ if and only if $df_t(X) = 0$, i.e., $p = f_t(x)$ is a focal point of $f$ at $x$.  Hence, we have
shown the following.
\begin{lemma}
\label{lem:3.4.1} 
The point $[K_t(x)]$ in $Q^{n+1}$ corresponds to a curvature sphere of the hypersurface $f$ at $x$ if and only if
$dK_t(X)= (0,0,0)$ for some nonzero vector $X \in T_xM^{n-1}$.
\end{lemma}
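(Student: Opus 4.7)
The plan is to unwind the chain of equivalences already established in the discussion preceding the lemma, with no new computation required beyond what equations (\ref{eq:3.4.5}) and (\ref{eq:3.4.6}) provide. The target biconditional essentially rephrases the definition of a focal point (as the image of a direction annihilated by $df_t$) in terms of the Legendre-lift formalism.

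First I would observe, via equation (\ref{eq:3.4.5}) and the correspondence (\ref{eq:1.4.4}), that $[K_t(x)]$ represents the oriented sphere in $S^n$ with center $f_t(x)$ and signed radius $t$, and that this sphere is in oriented contact with $f(M^{n-1})$ at $f(x)$ (since the parabolic pencil $\lambda(x)$ is precisely the pencil of spheres through $f(x)$ in oriented contact with $f$ there). Thus, by the definition of curvature sphere given just before the lemma, $[K_t(x)]$ corresponds to a curvature sphere at $x$ if and only if its center $f_t(x)$ is a focal point of $f$ at $x$.

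Next, I would invoke the definition of a focal point: $p = f_t(x)$ is a focal point of $f$ at $x$ if and only if the nullity of $df_t$ at $x$ is positive, i.e., if and only if there exists a nonzero $X \in T_xM^{n-1}$ with $df_t(X) = 0$. (This in turn reduces, by equation (\ref{eq:3.4.3}), to $\cot t$ being a principal curvature with principal vector $X$, but that refinement is not needed for the lemma.) Finally, equation (\ref{eq:3.4.6}) gives $dK_t(X) = (0,df_t(X),0)$, so $df_t(X) = 0$ is equivalent to $dK_t(X) = (0,0,0)$. Chaining these equivalences yields the lemma.

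There is really no main obstacle here; the only point requiring a small amount of care is verifying that the sphere represented by $[K_t(x)]$ has center exactly $f_t(x)$ (so that ``focal point'' translates directly into ``center of a curvature sphere''), and this is immediate from (\ref{eq:1.4.4}) applied to the vector $(\cos t, f_t(x), \sin t)$.
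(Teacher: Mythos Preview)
Your proposal is correct and follows essentially the same argument as the paper: the paper derives the lemma directly from equation (\ref{eq:3.4.6}), observing that $dK_t(X)=(0,0,0)$ if and only if $df_t(X)=0$, which is exactly the focal point condition defining a curvature sphere centered at $f_t(x)$. Your write-up simply makes the chain of equivalences a bit more explicit.
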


This characterization of curvature spheres depends on the parametrization of $\lambda$ given by
$\{k_1, k_2\}$ as in equation (\ref{eq:3.4.4}), 
and it has only been defined in the case where the spherical projection $f$ is an immersion. 
It is easy to show (see \cite[p. 66]{Cec1}) that the following is a generalization of the definition
of a curvature sphere that is valid  for an arbitrary parametrization of an arbitrary Legendre submanifold.

Let $\lambda: M^{n-1} \rightarrow \Lambda^{2n-1}$ be a Legendre submanifold parametrized by the pair $\{Z_1, Z_2\}$,
satisfying Pinkall's Conditions (1)--(3), see page \pageref{PC}.  
Let $x \in M^{n-1}$ and $r,s \in {\bf R}$ with $(r,s) \neq (0,0)$.  The sphere,
\begin{displaymath}
[K] = [r Z_1(x) + s Z_2(x)],
\end{displaymath}
is called a {\em curvature sphere} 
of $\lambda$ at $x$ if there exists a nonzero vector $X$ in $T_xM^{n-1}$ such that
\begin{equation}
\label{eq:3.4.7}
r \ dZ_1(X) + s \ dZ_2 (X) \in \mbox{\rm Span } \{Z_1(x),Z_2(x)\}.
\end{equation} 
The vector $X$ is called a {\em principal vector} 
corresponding to the curvature sphere $[K]$.  

From equation (\ref{eq:3.4.7}), it is clear that the set of principal vectors corresponding to a given curvature
sphere $[K]$ at $x$ is a subspace of $T_xM^{n-1}$.  This set is called the 
{\em principal space} corresponding
to the curvature sphere $[K]$.  Its dimension is the 
{\em multiplicity} of $[K]$.

\begin{remark}
\label{rem:3.4.2}
{\em The definition of curvature sphere can be developed in the context of Lie sphere geometry without any
reference to submanifolds of $S^n$ (see Cecil--Chern 
\cite{CC1} for details).  In that case, one begins with
a Legendre submanifold $\lambda: M^{n-1} \rightarrow \Lambda^{2n-1}$ and considers a curve $\gamma (t)$ lying
in $M^{n-1}$.  The set of points in $Q^{n+1}$ lying on the set of lines $\lambda (\gamma(t))$ forms a
ruled surface\index{ruled surface} 
in $Q^{n+1}$.  One then considers conditions for this ruled surface to be developable.  
This
leads to a system of linear equations whose roots determine the curvature spheres at each point along the curve.}
\end{remark}

We next want to show that the notion of a curvature sphere is invariant under Lie sphere transformations.
Let $\lambda: M^{n-1} \rightarrow \Lambda^{2n-1}$ be a Legendre submanifold parametrized by $\lambda = [Z_1, Z_2]$.
Suppose $\beta = P(B)$ is the Lie sphere transformation induced by an 
orthogonal transformation $B$ in the group $O(n+1,2)$.
Since $B$ is orthogonal, it is easy to check that the maps, $W_1 = BZ_1$, $W_2 = BZ_2$, satisfy Pinkall's
Conditions (1)--(3) for a Legendre submanifold. We will denote the Legendre submanifold defined by 
$\{W_1,W_2\}$ by 
\begin{displaymath}
\beta \lambda: M^{n-1} \rightarrow \Lambda^{2n-1}.
\end{displaymath}
The Legendre submanifolds $\lambda$ 
and $\beta \lambda$ are said to be {\em Lie equivalent}.  In terms of Euclidean geometry, 
suppose that $V$ and
$W$ are two immersed submanifolds of $S^n$ (or ${\bf R}^n$).  We say that $V$ and $W$ are 
{\em Lie equivalent} if their Legendre lifts are Lie equivalent.

Consider $\lambda$ and $\beta$ as above, so that $\lambda = [Z_1, Z_2]$ and $\beta \lambda = [W_1, W_2]$.
Note that for a tangent vector $X \in T_xM^{n-1}$ and for real numbers $(r,s) \neq (0,0)$, we have
\begin{equation}
\label{eq:3.4.8}
r \ dW_1(X) + s \ dW_2(X) = B (r \ dZ_1(X) + s \ dZ_2(X)),
\end{equation} 
since $B$ is linear.  Thus, we see that
\begin{displaymath}
r \ dW_1(X) + s \ dW_2(X) \in \mbox{\rm Span } \{W_1(x),W_2(x)\}
\end{displaymath} 
if and only if
\begin{displaymath}
r \ dZ_1(X) + s \ dZ_2(X) \in \mbox{\rm Span } \{Z_1(x),Z_2(x)\}.
\end{displaymath} 
This immediately implies the following theorem.
\begin{theorem}
\label{thm:3.4.3} 
Let $\lambda: M^{n-1} \rightarrow \Lambda^{2n-1}$ be a Legendre submanifold and $\beta$ a Lie sphere transformation.
The point $[K]$ on the line $\lambda(x)$ is a curvature sphere of $\lambda$ at $x$ if and only if the
point $\beta [K]$ is a curvature sphere of the Legendre submanifold $\beta \lambda$ at $x$.  Furthermore,
the principal spaces corresponding to $[K]$ and $\beta [K]$ are identical. 
\end{theorem}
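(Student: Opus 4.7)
The plan is to reduce the statement to the linearity computation already displayed in equation (\ref{eq:3.4.8}). First, write $\beta = P(B)$ for some $B \in O(n+1,2)$, and fix a parametrization $\lambda = [Z_1, Z_2]$ satisfying Pinkall's Conditions (1)--(3). As noted just before (\ref{eq:3.4.8}), $\beta \lambda$ is then parametrized by $\{W_1, W_2\}$ with $W_i = B Z_i$. Every point on the line $\lambda(x)$ has the form $[K] = [r Z_1(x) + s Z_2(x)]$ with $(r,s) \neq (0,0)$, and its image under $\beta$ is $\beta[K] = [B K] = [r W_1(x) + s W_2(x)]$, which lies on the line $\beta\lambda(x)$.

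The core step is the identity (\ref{eq:3.4.8}), which comes from the linearity of $B$: for every $X \in T_x M^{n-1}$,
\[
r\, dW_1(X) + s\, dW_2(X) \;=\; B\bigl(r\, dZ_1(X) + s\, dZ_2(X)\bigr).
\]
Because $B$ is a linear isomorphism of $\mathbf{R}^{n+3}_2$, it carries $\mathrm{Span}\{Z_1(x), Z_2(x)\}$ bijectively onto $\mathrm{Span}\{W_1(x), W_2(x)\}$. Consequently, the defining condition (\ref{eq:3.4.7}) for $\{Z_1, Z_2\}$ at $(x, X)$ with coefficients $(r,s)$ holds if and only if the analogous condition for $\{W_1, W_2\}$ holds at $(x, X)$ with the same $(r,s)$ and \emph{the same} vector $X$. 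This delivers both conclusions of the theorem at once: $[K]$ is a curvature sphere of $\lambda$ at $x$ iff $\beta[K]$ is a curvature sphere of $\beta\lambda$ at $x$, and the principal spaces (which are precisely the sets of admissible $X$'s) coincide.

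There is essentially no obstacle to this argument; it is a direct consequence of linearity. The only conceptual point worth flagging is that the notion of curvature sphere should be verified to be intrinsic to $\lambda$, i.e., independent of the parametrization $\{Z_1, Z_2\}$. For any reparametrization $\widetilde{Z}_i = \alpha_{ij} Z_j$ by a smooth $GL(2)$-valued function on $M^{n-1}$, condition (\ref{eq:3.4.7}) transforms covariantly and selects the same points $[K]$ on $\lambda(x)$ and the same principal subspaces. This is a routine check, after which one is free to choose any convenient $\{Z_1, Z_2\}$, and the linearity argument above finishes the proof.
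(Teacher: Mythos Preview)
Your proposal is correct and follows essentially the same approach as the paper: the paper establishes the identity (\ref{eq:3.4.8}) from the linearity of $B$, observes that $B$ carries $\mathrm{Span}\{Z_1(x),Z_2(x)\}$ bijectively onto $\mathrm{Span}\{W_1(x),W_2(x)\}$, and then states that the theorem follows immediately. Your additional remark on independence of parametrization is a reasonable sanity check but is not needed for the argument itself.
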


An important special case is when the Lie sphere transformation is a 
spherical parallel transformation
$P_t$ defined by,
\begin{eqnarray}
\label{eq:3.4.9}
P_t e_1 & = & \cos t \ e_1 + \sin t \ e_{n+3}, \nonumber \\
P_t e_{n+3} & = & - \sin t \ e_1 + \cos t \ e_{n+3},\\
P_t e_i & = & e_i, \quad 2 \leq i \leq n+2. \nonumber
\end{eqnarray}
The transformation $P_t$ has the effect of adding $t$ to the signed radius 
of each sphere in $S^n$ while keeping the center
fixed (see \cite[pp. 48--49]{Cec1}).

Suppose that $\lambda: M^{n-1} \rightarrow \Lambda^{2n-1}$ is a Legendre submanifold parametrized by the 
point sphere and great sphere maps $\{k_1, k_2\}$, as in equation (\ref{eq:3.4.4}).  
Then $P_t \lambda = [W_1, W_2]$,
where 
\begin{equation}
\label{eq:3.4.10}
W_1 = P_t k_1 = (\cos t,f,\sin t), \quad W_2 = P_t k_2 = (- \sin t, \xi, \cos t).
\end{equation} 
Note that $W_1$ and $W_2$ are not the point sphere and great sphere maps for $P_t \lambda$.  Solving for the
point sphere map $Z_1$ and the great sphere map $Z_2$ of $P_t \lambda$, we find
\begin{eqnarray}
\label{eq:3.4.11}
Z_1 & = & \cos t \ W_1 - \sin t \ W_2 = (1, \cos t \ f - \sin t \ \xi, 0),  \\
Z_2 & = &  \sin t \ W_1 + \cos t \ W_2 = (0, \sin t \ f + \cos t \ \xi, 1).\nonumber
\end{eqnarray}
From this, we see that $P_t \lambda$ has spherical projection and spherical unit normal field given, respectively, by
\begin{eqnarray}
\label{eq:3.4.12}
f_{-t} & = & \cos t \ f - \sin t \ \xi = \cos (-t)f + \sin (-t) \xi,  \\
\xi_{-t} & = &  \sin t \ f + \cos t \ \xi = - \sin (-t) f + \cos (-t) \xi.\nonumber
\end{eqnarray}
The minus sign occurs because $P_t$ takes a sphere with center $f_{-t}(x)$ and radius $-t$ to the point sphere
$f_{-t}(x)$.  We call $P_t \lambda$ a {\em parallel submanifold}
of $\lambda$.  

Formula (\ref{eq:3.4.12}) shows
the close correspondence between these parallel submanifolds and the 
{\em parallel hypersurfaces} $f_t$ to $f$.
In the case where the spherical projection $f$ is an immersed hypersurface,
a parallel map $f_t$ has a singularity at $x \in M^{n-1}$
if and only if $p = f_t (x)$ is a focal point of $M^{n-1}$ at $x$. So for each $x \in M^{n-1}$, there are at most
$n-1$ values of $t \in [0,\pi)$ for which $f_t$ fails to be an immersion. 

The following theorem, due to Pinkall 
\cite[p. 428]{P4} (see also \cite[pp. 70--72]{Cec1}), 
shows that this is also true in the case where the spherical projection $f$ is not an immersion.
This theorem is clear if the original spherical projection $f$ is an immersion, but it requires proof
if $f$ has singularities. We omit the proof here, however, and refer the reader to
\cite[p. 428]{P4} or \cite[pp. 70--72]{Cec1}. 

\begin{theorem}
\label{thm:3.4.4} 
Let $\lambda: M^{n-1} \rightarrow \Lambda^{2n-1}$ be a Legendre submanifold with spherical projection $f$ and spherical
unit normal field $\xi$.  Then for each $x \in M^{n-1}$, the parallel map, 
\begin{displaymath}
f_t  =  \cos t \ f + \sin t \ \xi,
\end{displaymath}
fails to be an immersion at $x$ for at most $n-1$ values of $t \in [0,\pi)$.
\end{theorem}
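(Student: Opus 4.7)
The plan is to reinterpret the statement in terms of curvature spheres of $\lambda$. Writing $\lambda = [k_1, k_2]$ with $k_1 = (1, f, 0)$ and $k_2 = (0, \xi, 1)$, set $K_t = \cos t\, k_1 + \sin t\, k_2$; a direct computation gives $dK_t(X) = (0, df_t(X), 0)$. Since no nonzero vector of the form $(0, v, 0)$ lies in $\mbox{Span}\,\{k_1(x), k_2(x)\}$, the general definition in (\ref{eq:3.4.7}) shows that $[K_t(x)]$ is a curvature sphere of $\lambda$ at $x$ precisely when $df_t$ has nontrivial kernel at $x$, i.e.\ when $f_t$ fails to be an immersion at $x$. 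So the theorem is equivalent to showing that at each $x$, there are at most $n-1$ such values of $t \in [0,\pi)$.

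Next I would set up the relevant linear algebra at $x$. Using $|f| = |\xi| = 1$, $f \cdot \xi = 0$ and the contact condition $df(X) \cdot \xi = 0$, both $df(X)$ and $d\xi(X)$ lie in the $(n-1)$-dimensional subspace $E := f(x)^\perp \cap \xi(x)^\perp$ of ${\bf R}^{n+1}$. Hence $L(t) := \cos t\, df(x) + \sin t\, d\xi(x)$ is a linear map between two $(n-1)$-dimensional spaces, and $f_t$ fails to be an immersion at $x$ iff $L(t)$ is singular. Choosing orthonormal bases of $T_xM^{n-1}$ and $E$, let $V$ and $W$ be the matrices of $df(x)$ and $d\xi(x)$. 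Differentiating the identity $df(X) \cdot \xi = 0$ in a second tangential direction and invoking the symmetry of mixed partials yields the relation $V^T W = W^T V$, while Pinkall's immersion condition (2) becomes $\ker V \cap \ker W = \{0\}$.

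The determinant $\det L(t) = \det(\cos t\, V + \sin t\, W)$ is a homogeneous polynomial of degree $n-1$ in $(\cos t, \sin t)$; equivalently $P(s) := \det(V + sW)$ is a polynomial of degree at most $n-1$ in $s = \tan t$. Each real zero of $P$ corresponds to a unique $t \in [0, \pi) \setminus \{\pi/2\}$ with $L(t)$ singular, and $t = \pi/2$ is singular only when $\det W = 0$, i.e.\ precisely when $\deg P \le n-2$; tallying both cases shows at most $n-1$ singular values of $t \in [0, \pi)$, provided $P \not\equiv 0$.

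The main obstacle is therefore ruling out $P \equiv 0$, and here the symmetry $V^T W = W^T V$ is essential. If $P \equiv 0$, then $V + sW$ is singular over the field ${\bf R}(s)$, and clearing denominators produces a nonzero polynomial $X(s) = \sum_{i=0}^k s^i X_i$ with $(V + sW)X(s) = 0$. Comparing coefficients of $s^i$ gives $V X_0 = 0$, $V X_i + W X_{i-1} = 0$ for $1 \le i \le k$, and $W X_k = 0$. Since $X_0 \in \ker V$, the symmetry gives $V^T(WX_0) = (W^T V)X_0 = 0$, so $WX_0 \in (\mbox{Image}\, V)^\perp$; but $WX_0 = -V X_1 \in \mbox{Image}\, V$, forcing $WX_0 = 0$. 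The immersion condition then yields $X_0 \in \ker V \cap \ker W = \{0\}$. The remaining equations have the identical structure for $Y(s) := X_1 + sX_2 + \cdots + s^{k-1} X_k$, so an induction on $k$ gives $X_i = 0$ for all $i$, contradicting $X(s) \not\equiv 0$. Hence $P \not\equiv 0$, and the count follows.
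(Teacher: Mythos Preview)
The paper does not actually prove Theorem~\ref{thm:3.4.4}; it explicitly omits the proof and refers the reader to Pinkall~\cite{P4} and \cite[pp.~70--72]{Cec1}. So there is no ``paper's own proof'' to compare against here.

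Your argument is correct and self-contained. The reduction to curvature spheres via $dK_t(X)=(0,df_t(X),0)$ is exactly the content of Lemma~\ref{lem:3.4.1} extended to the general (possibly singular) case, and your identification of Pinkall's immersion condition~(2) with $\ker V\cap\ker W=\{0\}$ is right. The key symmetry $V^TW=W^TV$ is correctly derived from the contact condition $df(X)\cdot\xi=0$ plus equality of mixed partials; note that when $f$ is an immersion this symmetry is just the self-adjointness of the shape operator, so your computation is the natural substitute in the degenerate case. The polynomial-pencil argument---reducing to $P(s)=\det(V+sW)\not\equiv 0$ and then using the symmetry to kill the lowest coefficient of a kernel vector over ${\bf R}[s]$---is a clean way to finish, and the bookkeeping for $t=\pi/2$ versus $\deg P\le n-2$ is handled correctly.

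For context, the proof in \cite[pp.~70--72]{Cec1} is also linear-algebraic in nature and hinges on the same two ingredients (the symmetry coming from the contact condition and the transversality $\ker V\cap\ker W=\{0\}$), so your approach is in the same spirit as the referenced proof, though your packaging via the polynomial pencil over ${\bf R}(s)$ is somewhat more streamlined than the coordinate argument there.
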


Here $[0,\pi)$ is the appropriate interval, because of the fact mentioned earlier that each principal
curvature of an immersion produces two distinct antipodal focal points in the interval $[0,2\pi)$.  We next
state some important consequences of this theorem that are obtained by passing to a parallel submanifold,
if necessary, and then applying well-known results concerning immersed hypersurfaces in $S^n$. 

\begin{corollary}
\label{cor:3.4.5} 
Let $\lambda: M^{n-1} \rightarrow \Lambda^{2n-1}$ be a Legendre submanifold. Then: \\
{\em (a)} at each point $x \in M^{n-1}$, there are at most $n-1$ distinct curvature spheres $K_1, \ldots, K_g$,\\
{\em (b)} the principal vectors corresponding to a curvature sphere $K_i$ form a subspace $T_i$ of the tangent space
$T_xM^{n-1}$,\\
{\em (c)} the tangent space $T_xM^{n-1} = T_1 \oplus \cdots \oplus T_g$,\\
{\em (d)} if the dimension of a given $T_i$ is constant on an open subset $U$ of $M^{n-1}$, then the
principal distribution $T_i$ is integrable on $U$,\\
{\em (e)} if $\dim T_i = m > 1$ on an open subset $U$ of $M^{n-1}$, then the curvature sphere map $K_i$ is constant
along the leaves of the principal foliation $T_i$.
\end{corollary}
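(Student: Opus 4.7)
The plan is to reduce each of the five claims to the corresponding classical fact about an immersed hypersurface in $S^n$, by using Theorem \ref{thm:3.4.4} to pass locally to a parallel Legendre submanifold whose spherical projection is an immersion, and then invoking the Lie invariance of curvature spheres and principal spaces from Theorem \ref{thm:3.4.3}.

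First, for any fixed $x \in M^{n-1}$, Theorem \ref{thm:3.4.4} produces a value $t \in [0,\pi)$ such that the parallel map $f_{-t}$ of equation (\ref{eq:3.4.12}) is an immersion at $x$, and by openness of the immersion condition, $f_{-t}$ is an immersion on a neighborhood $U$ of $x$. On $U$, the parallel Legendre submanifold $P_t \lambda$ coincides with the Legendre lift of the immersed hypersurface $f_{-t}$ with unit normal $\xi_{-t}$, so by Lemma \ref{lem:3.4.1} its curvature spheres at $x$ correspond exactly to the principal curvatures of the shape operator $A$ of $f_{-t}$ at $x$. By Theorem \ref{thm:3.4.3}, the curvature spheres of $\lambda$ at $x$ correspond bijectively (via $P_{-t}$) to those of $P_t \lambda$, with identical principal spaces in $T_x M^{n-1}$. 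Claims (a), (b), (c) then follow immediately from the fact that $A$ is a symmetric endomorphism of the $(n-1)$-dimensional real inner product space $T_x M^{n-1}$: it has at most $n-1$ distinct eigenvalues, each eigenspace is a linear subspace, and together the eigenspaces give an orthogonal direct sum decomposition of $T_x M^{n-1}$.

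For (d) and (e), the same local reduction shows that on any open set $U' \subset M^{n-1}$ where $\dim T_i = m$ is constant, one can, in a neighborhood of each point of $U'$, realize $T_i$ as the eigenspace distribution (of constant dimension $m$) associated to a single principal curvature of an immersed hypersurface in $S^n$. Claim (d) then follows from the classical result for hypersurfaces in a space form that a principal distribution of constant dimension is integrable, a consequence of the Codazzi equation. Claim (e) follows from the classical theorem that a principal curvature of constant multiplicity $m > 1$ on an open set of such a hypersurface is constant along the leaves of its principal foliation; translated back to $\lambda$ via Theorem \ref{thm:3.4.3}, this gives that the curvature sphere map $K_i$ is constant along the leaves of $T_i$.

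The main obstacle is parts (d) and (e): whereas (a)--(c) reduce to pointwise linear algebra of a symmetric operator, the integrability of $T_i$ and the leaf-constancy of $K_i$ rest on the Codazzi-type results for hypersurfaces in a space form, which are local (not pointwise) in nature. The care required is to ensure that the parallel model $f_{-t}$ is an immersion on a full neighborhood of each point of $U'$ (which we get from Theorem \ref{thm:3.4.4} together with openness), so that the classical Codazzi-based arguments apply on that neighborhood, and to verify that integrability of a distribution and constancy of a sphere-valued map along leaves are preserved under the linear map induced by $P_t$, so the conclusions transfer back to $\lambda$.
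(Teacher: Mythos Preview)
Your proposal is correct and matches the paper's own proof essentially line for line: reduce to an immersed spherical projection via Theorem~\ref{thm:3.4.4}, transfer curvature spheres and principal spaces through the parallel transformation using Theorem~\ref{thm:3.4.3}, then invoke linear algebra of the symmetric shape operator for (a)--(c) and the Codazzi-based hypersurface results (integrability of constant-rank principal distributions, leaf-constancy of principal curvatures of multiplicity $>1$) for (d)--(e). The only minor addition in the paper is that it separates the case $\dim T_i = 1$ in (d), where integrability is automatic from ODE theory rather than Codazzi, and it gives explicit attributions (Ryan, Nomizu, Singley, Reckziegel) for the classical hypersurface facts you cite.
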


\begin{proof}
In the case where the spherical projection $f$ of $\lambda$ is an immersion, the corollary follows
from known results concerning hypersurfaces in $S^n$ and the correspondence between the curvature spheres of
$\lambda$ and the principal curvatures of $f$.  Specifically, (a)--(c) follow from elementary linear algebra
applied to the (symmetric) shape operator $A$ of the immersion $f$.  As to (d) and (e), 
Ryan \cite[p. 371]{Ryan1}
showed that the principal curvature functions on an immersed hypersurface are continuous. 
Nomizu \cite{Nom2} then
showed that any continuous principal curvature function $\kappa_i$ which has constant multiplicity on an open subset
$U$ in $M^{n-1}$ is smooth on $U$, as is its corresponding principal distribution (see also, 
Singley \cite{Sin}).  If the
multiplicity $m_i$ of $\kappa_i$ equals one on $U$, then $T_i$ is integrable by the theory of ordinary differential 
equations.  If $m_i > 1$, then the integrability of $T_i$, and the fact that $\kappa_i$ is constant along the leaves
of $T_i$ are consequences of Codazzi's equation
(Ryan \cite{Ryan1}, see also 
Cecil--Ryan \cite[p. 24]{CR8}
and Reckziegel \cite{Reck2}).

Note that (a)--(c) are pointwise statements, while (d)--(e) hold on an open set $U$ if they can be shown to
hold in a neighborhood of each point of $U$.  Now let $x$ be an arbitrary point of $M^{n-1}$.  If the spherical
projection $f$ is not an immersion at $x$, then by Theorem~\ref{thm:3.4.4}, we can find a 
parallel transformation
$P_{-t}$ such that the spherical projection $f_t$ of the Legendre submanifold $P_{-t} \lambda$ is an immersion
at $x$, and hence on a neighborhood of $x$.  By Theorem~\ref{thm:3.4.3}, the corollary also holds for $\lambda$
in this neighborhood of $x$.  Since $x$ is an arbitrary point, the corollary is proved.
\end{proof}

Let $\lambda: M^{n-1} \rightarrow \Lambda^{2n-1}$ be an arbitrary Legendre submanifold. A connected submanifold
$S$ of $M^{n-1}$ is called a 
{\em curvature surface} if at each $x \in S$, the tangent space $T_xS$ is
equal to some principal space $T_i$.  For example, if $\dim T_i$ is constant on an open subset $U$ of $M^{n-1}$,
then each leaf of the principal foliation $T_i$ is a curvature surface on $U$.  Curvature surfaces are
plentiful, since the results of Reckziegel \cite{Reck2} and Singley \cite{Sin} imply that there is an open dense subset 
$\Omega$ of $M^{n-1}$ on which the multiplicities of the curvature spheres are locally constant.  On $\Omega$, each
leaf of each principal foliation is a curvature surface.

It is also possible to have a curvature surface $S$ which is not a leaf of a principal foliation, because the
multiplicity of the corresponding curvature sphere is not constant on a neighborhood of $S$, as in the following example
of Pinkall \cite{P4}.

\begin{example}
\label{ex:3.4.6}
A curvature surface that is not a leaf of a principal foliation.\\
\noindent
{\em Let $T^2$ be a torus of revolution 
in ${\bf R}^3$, and embed ${\bf R}^3$ into ${\bf R}^4 = {\bf R}^3 \times {\bf R}$.
Let $\eta$ be a field of unit normals to $T^2$ in ${\bf R}^3$.  Let $M^3$ be a 
tube of sufficiently small radius
$\varepsilon > 0$ around $T^2$ in ${\bf R}^4$, so that $M^3$ is a compact smooth embedded hypersurface in ${\bf R}^4$.
The normal space 
to $T^2$ in ${\bf R}^4$ at a point $x \in T^2$ is spanned by $\eta(x)$ and $e_4 = (0,0,0,1)$.
The shape operator $A_\eta$ of $T^2$ has two distinct principal curvatures at each point of $T^2$, while
the shape operator $A_{e_4}$ of $T^2$ is identically zero.  Thus the shape operator $A_\zeta$ for the normal
\begin{displaymath}
\zeta = \cos \theta \ \eta (x) + \sin \theta \ e_4,
\end{displaymath}
at a point $x \in T^2$, is given by 
\begin{displaymath}
A_\zeta = \cos \theta \ A_{\eta(x)}.
\end{displaymath}

\noindent
From the formulas for the 
principal curvatures of a tube 
(see Cecil--Ryan \cite[pp. 17--18]{CR8}), one finds that at all points of $M^3$ where
$x_4 \neq \pm \varepsilon$, there are three distinct principal curvatures of multiplicity one, which are constant 
along their corresponding lines of curvature 
(curvature surfaces of dimension one). One of these principal curvatures is $\mu = -1/\varepsilon$ resulting
from the tube construction.
However, on the two tori,
$T^2 \times \{ \pm \varepsilon\}$, the principal curvature $\kappa = 0$ has multiplicity two.  These two tori
are curvature surfaces for this principal curvature $\kappa$, since the principal space corresponding to $\kappa$ is
tangent to each torus at every point.  These two tori are not leaves of a principal foliation, however, since
the leaves of a foliation must all have the same dimension.
The Legendre lift $\lambda$ of 
this embedding of $M^3$ in ${\bf R}^4$ has the same properties.}
\end{example}

Part (e) of Corollary \ref{cor:3.4.5} 
has the following generalization, the proof of which is obtained by invoking the theorem of 
Ryan \cite{Ryan1} mentioned in the proof of Corollary \ref{cor:3.4.5}, with obvious minor modifications.

\begin{corollary}
\label{cor:3.4.7} 
Suppose that $S$ is a curvature surface of dimension $m > 1$ in a Legendre submanifold.  Then the corresponding curvature
sphere is constant along $S$.
\end{corollary}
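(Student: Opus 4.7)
The plan is to reduce to the classical hypersurface setting via a spherical parallel transformation and then run a Codazzi-equation argument along $S$, using the hypothesis $m>1$ in an essential way.

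First, fix $y_0 \in S$ arbitrarily. By Theorem \ref{thm:3.4.4}, I can choose $t_0 \in [0,\pi)$ so that the spherical projection of $P_{-t_0}\lambda$ is an immersion at $y_0$, hence on an open neighborhood $U$ of $y_0$ in $M^{n-1}$. By Theorem \ref{thm:3.4.3}, $\lambda$ and $P_{-t_0}\lambda$ share the same principal spaces, so $S \cap U$ remains a curvature surface of dimension $m$ for $P_{-t_0}\lambda$, and constancy of the curvature sphere along $S \cap U$ is equivalent for the two Legendre submanifolds. Hence I may assume throughout that $\lambda$ comes from an immersion $f: M^{n-1}\to S^n$ with shape operator $A$. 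In this case the curvature sphere at $y \in S \cap U$ is
\begin{displaymath}
[K(y)] = [(\cos t(y),\, f_{t(y)}(y),\, \sin t(y))], \qquad \cot t(y) = \kappa(y),
\end{displaymath}
where $\kappa(y)$ is the principal curvature whose eigenspace contains $T_yS$. Since $TS|_{S\cap U}$ is a smooth subbundle of $TM|_{S\cap U}$ and $A$ is smooth, $\kappa$ is a smooth function on $S \cap U$.

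Next, to show $\kappa$ is constant along $S \cap U$, I would apply the Codazzi equation $(\nabla_X A)Y = (\nabla_Y A)X$ to smooth vector fields $X, Y$ tangent to $S \cap U$. Using $AX = \kappa X$ and $AY = \kappa Y$ along $S \cap U$, the equation expands to
\begin{displaymath}
X(\kappa)Y - Y(\kappa)X = (A - \kappa I)[X,Y].
\end{displaymath}
The left side lies in the $\kappa$-eigenspace of $A$, while the right side lies in a complementary subspace (the image of $A - \kappa I$). Both sides must therefore vanish. Because $m \geq 2$, at each point of $S$ I can take $X, Y$ linearly independent in $T_yS$, forcing $X(\kappa)(y) = 0$ for every $X \in T_yS$. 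Hence $\kappa$ is locally constant on $S$, and by connectedness constant on all of $S$; so too is $t$. Finally, for any $X \in T_yS$,
\begin{displaymath}
df_t(X) = df\bigl((\cos t - \sin t\,\kappa)X\bigr) = 0
\end{displaymath}
since $\cos t = \kappa \sin t$. Thus $f_t$ is constant along $S \cap U$, and combined with constant $t$ the curvature sphere $[K] = [(\cos t, f_t, \sin t)]$ is constant along $S \cap U$; a local-to-global argument on the connected set $S$ completes the proof.

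The main obstacle is precisely the feature that distinguishes this corollary from part (e) of Corollary \ref{cor:3.4.5}: the multiplicity of the curvature sphere need not be constant in any neighborhood of $S$ in $M^{n-1}$, so the Nomizu-type smoothness results used in the proof of \ref{cor:3.4.5}(e) are not directly available on such a neighborhood. What rescues the argument is that the Codazzi calculation only requires smoothness of $\kappa$ along $S$ itself and of the tangent distribution $TS$; both follow from $S$ being a smooth curvature surface of constant dimension $m$, independently of how the other principal curvatures may behave transversally to $S$.
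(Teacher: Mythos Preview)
Your argument is correct and follows the same approach the paper indicates: the paper's proof consists of a single sentence pointing to Ryan's Codazzi-equation theorem ``with obvious minor modifications,'' and you have written out exactly that argument, including the reduction to an immersed spherical projection via parallel transformation (as in the proof of Corollary~\ref{cor:3.4.5}) and the key observation that the Codazzi computation only needs $\kappa$ and $TS$ to be smooth along $S$ itself, not on a neighborhood in $M^{n-1}$.
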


As we stated at the beginning of the paper,
an oriented hypersurface $f:M^{n-1} \rightarrow S^n$ (or ${\bf R}^n$) is said to be {\em Dupin} if:

\begin{enumerate}
\item[(a)] along each curvature surface, the corresponding principal
curvature is constant.
\end{enumerate}
The hypersurface $M$ is called {\em proper Dupin} if, in addition 
to Condition (a), the following condition is satisfied:

\begin{enumerate}
\item[(b)] the number $g$ of distinct principal curvatures is constant on
$M$.
\end{enumerate}

On an open subset $U$ on which Condition (b) holds, Condition (a) is equivalent
to requiring that each curvature surface in each principal
foliation be an open subset of a metric sphere in $S^n$
of dimension equal to the multiplicity of the corresponding
principal curvature. Condition (a) is also equivalent to the condition that
along each curvature surface, the corresponding curvature sphere map is constant. Finally, on $U$, Condition (a)
is equivalent to requiring that for each principal curvature $\kappa$, the image of the focal
map $f_{\kappa}$ is a smooth submanifold of $S^n$ of codimension $m+1$, where $m$ is the multiplicity of $\kappa$.
See Cecil--Ryan \cite[pp. 18--34]{CR8} for proofs of these results.

One consequence of the results given above
is that like isoparametric hypersurfaces, all proper Dupin hypersurfaces are algebraic.
For simplicity, we take the ambient manifold to be ${\bf R}^n$.
The theorem was formulated by Cecil, Chi and Jensen \cite{CCJ3} as follows.
\begin{theorem}
\label{algebraic}
Every connected proper Dupin hypersurface 
$f:M \rightarrow{\bf R}^n$ embedded in ${\bf R}^n$ is
contained in a connected component of an 
irreducible algebraic subset of ${\bf R}^n$ of dimension $n-1$.  
\end{theorem}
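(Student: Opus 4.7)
The plan is to exploit two complementary facts: the Lie quadric $Q^{n+1}$ is itself an irreducible real algebraic variety in ${\bf P}^{n+2}$, and a proper Dupin hypersurface is rigidly determined (up to a tube construction) by its focal submanifolds, which themselves inherit a Dupin-like structure of higher codimension. I would set this up as an induction on the number $g$ of distinct principal curvatures, or equivalently on the codimension of the iterated focal manifolds.

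First I would handle the base case $g = 1$: then $f(M)$ is an open subset of a Euclidean hypersphere or hyperplane, which is trivially contained in an irreducible algebraic subset of dimension $n-1$. For the inductive step, pick any principal curvature $\kappa$ of multiplicity $m \geq 1$. By the remarks immediately preceding the theorem (and Corollary \ref{cor:3.4.7}), the focal map $f_\kappa$ has constant rank $n-1-m$, its image $V$ is a smooth embedded submanifold of codimension $m+1$ in ${\bf R}^n$, and $M$ lies in a tube of fixed Euclidean radius $1/\kappa$ about $V$. Once $V$ is known to be algebraic, the tube around it is cut out of ${\bf R}^n$ by polynomial equations arising from the distance and normal-incidence conditions, invoking Tarski--Seidenberg plus a dimension count to confirm that the resulting set is an algebraic variety of dimension $n-1$ and that $M$ lies in a single irreducible component.

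The heart of the argument is thus the inductive conclusion that $V$ itself sits inside an irreducible algebraic set of dimension $n-1-m$. One promotes $V$ to a Legendre submanifold using its unit normal bundle in ${\bf R}^n$, in the style of equations (\ref{eq:3.3.2})--(\ref{eq:3.3.3}), and argues that this Legendre lift inherits the Dupin property with one fewer effective curvature sphere: the curvature sphere corresponding to $\kappa$ has been collapsed, while the point-sphere map of $V$'s Legendre lift encodes precisely the remaining curvature data of $M$. The induction hypothesis then applies to $V$ (regarded as a Dupin submanifold rather than a hypersurface, so a generalization of the statement is what actually gets inducted on).

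The main obstacle I expect is bridging local algebraicity with global algebraicity. At any point of $M$, the algebraicity of $Q^{n+1}$, together with Pinkall's Conditions and the explicit polynomial dependence of curvature spheres on the $1$-jet of the Legendre lift, yields a local real-analytic parametrization whose graph satisfies explicit polynomial relations. Promoting this to the conclusion that the connected $M$ lies in a single irreducible real algebraic subset of ${\bf R}^n$ of dimension $n-1$ requires real-algebraic analytic continuation of these local semialgebraic germs and a degree/dimension control showing that the Zariski closure retains the correct dimension; connectedness of $M$ is then invoked to pick out the specific irreducible component named in the statement.
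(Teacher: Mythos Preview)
Your inductive scheme has a genuine gap at its central step. You assert that ``$M$ lies in a tube of fixed Euclidean radius $1/\kappa$ about $V$,'' but for a proper Dupin hypersurface the principal curvature $\kappa$ is constant only along each leaf of its principal foliation, not on all of $M$. The distance from $f(x)$ to the focal point $f_\kappa(x)$ is $1/|\kappa(x)|$, and this varies from leaf to leaf. Hence $f(M)$ is \emph{not} a tube of constant radius over the focal submanifold $V$, and the step ``once $V$ is known to be algebraic, the tube around it is cut out of ${\bf R}^n$ by polynomial equations'' fails as stated. What is true is that $f(M)$ is the envelope of a one-parameter-per-leaf family of curvature spheres of varying radii; turning that into an algebraic description requires exactly the kind of local parametrization argument you are trying to avoid. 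A second, related issue is that your induction hypothesis is not the theorem itself but an unstated generalization to Dupin submanifolds of arbitrary codimension; you would need to formulate and prove that stronger statement, and the Legendre lift of $V$ does not obviously have ``one fewer'' curvature sphere in the required sense, since its curvature spheres depend on the normal direction.

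The paper does not give a proof either; it cites Cecil--Chi--Jensen \cite{CCJ3} and records only a one-line sketch. That sketch proceeds differently from your proposal: rather than inducting through focal submanifolds, one uses directly that on a proper Dupin hypersurface each leaf of each principal foliation is an open subset of a round sphere in ${\bf R}^n$ (this is the equivalent form of Condition (a) noted just before the theorem). These $g$ families of spherical leaves furnish, near every point of $M$, an explicit analytic algebraic parametrization of a neighborhood of $f(x)$. One then invokes methods of real algebraic geometry to pass from these local semialgebraic patches to a single irreducible algebraic set of dimension $n-1$ containing the connected $f(M)$. Your final paragraph correctly identifies this local-to-global passage as the delicate point, but the local input should be the spherical-leaf parametrization, not the tube description.
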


Pinkall \cite{P6} sent the author a letter in 1984 that contained a sketch of a proof of this result.
However, a proof was not published until 2008 by Cecil, Chi and Jensen \cite{CCJ3}, who used methods of real
algebraic geometry to give a complete proof based on Pinkall's sketch. The proof makes use of the various principal foliations whose leaves are open subsets of spheres to construct an analytic algebraic parametrization of a 
neighborhood of $f(x)$ for each point $x \in M$. 

In contrast to the situation for isoparametric hypersurfaces, however, a connected proper Dupin hypersurface 
in $S^n$ does not necessarily lie in a compact connected proper Dupin hypersurface,
because the multiplicities of the principal curvatures are not necessarily constant on a compact Dupin hypersurface
that contains the proper Dupin hypersurface (see
Remark \ref{rem:3.4.6} below for an example).

An important class of proper Dupin hypersurfaces consists of the isoparametric hypersurfaces in 
$S^n$, and those hypersurfaces in ${\bf R}^n$ obtained from isoparametric hypersurfaces in $S^n$ via
stereographic projection.  For example, the well-known ring cyclides of Dupin \cite{D} in ${\bf R}^3$ are obtained
this way from a standard product torus $S^1(r) \times S^1(s) \subset S^3$, $r^2+s^2=1$.
 
The torus of revolution in ${\bf R}^3$ in Example \ref{ex:3.4.6} is a cyclide of Dupin.
On the torus,
there are $g=2$ distinct principal curvatures at each point, and each principal curvature is constant
along each leaf of its corresponding principal foliation.  These leaves are latitude circles for one
principal curvature and longitude circles for the other principal curvature.

\begin{remark}
\label{rem:3.4.6}
A Dupin hypersurface that is not proper Dupin.\\
\noindent
{\em The tube $M^3 \subset {\bf R}^4$
over the torus in Example \ref{ex:3.4.6} is an example of a Dupin hypersurface that is not proper Dupin.
At points of $M^3$ except those on the top and bottom tori $T^2 \times \{ \pm \varepsilon\}$, there are three distinct
principal curvatures that are each constant along their corresponding principal curves (which are circles).
However, on $T^2 \times \{ \pm \varepsilon\}$, there are only two distinct principal curvatures, $\kappa = 0$
of multiplicity two, and $\mu = -1/\varepsilon$ of multiplicity one.  Thus, $M^3$ is not proper Dupin, since
the number of distinct principal curvatures is not constant on $M^3$.  The hypersurface $M^3$ is Dupin, however,
since along each curvature surface (including $T^2 \times \{ \pm \varepsilon\}$), the corresponding
principal curvature is constant.}
\end{remark}

We generalize these definitions to the context of Lie sphere geometry
by defining a Legendre submanifold $\lambda: M^{n-1} \rightarrow \Lambda^{2n-1}$ to be a {\em Dupin submanifold} if:

\begin{enumerate}
\item[(a)] along each curvature surface, the corresponding 
curvature sphere is constant.
\end{enumerate}
The Legendre submanifold $\lambda$ is called {\em proper Dupin} if, in addition 
to Condition (a), the following condition is satisfied:

\begin{enumerate}
\item[(b)] the number $g$ of distinct curvature spheres is constant on
$M$.
\end{enumerate}

Of course, the Legendre lift of a
Dupin hypersurface in $S^n$ or ${\bf R}^n$ is
Dupin in the sense defined here, but the definition here is more general, because the spherical projection 
of a Dupin
submanifold need not be an immersion.  Corollary \ref{cor:3.4.7} shows that the only curvature surfaces which must
be considered in checking the Dupin property (a) are those of dimension one.  

The Legendre lift of the 
torus of revolution $T^2 \subset {\bf R}^3$ in Example \ref{ex:3.4.6}
above is a proper Dupin submanifold.  On the other hand, the Legendre lift of the 
tube $M^3$ over $T^2$ is
Dupin, but not proper Dupin, since the number of distinct curvature spheres is not constant on $M^3$. 

The following theorem shows that both the 
Dupin and proper Dupin conditions are invariant under Lie sphere transformations,
and many important classification results for Dupin submanifolds have been obtained in the
setting of Lie sphere geometry.

\begin{theorem}
\label{thm:dupin-lie-invariant} 
Let $\lambda: M^{n-1} \rightarrow \Lambda^{2n-1}$ be a Legendre submanifold and $\beta$ a Lie sphere transformation.\\
(a) If $\lambda$ is Dupin, then $\beta \lambda$ is Dupin.\\  
(b) If $\lambda$ is proper Dupin, then $\beta \lambda$ is proper Dupin.
\end{theorem}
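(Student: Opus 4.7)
The plan is to derive both statements essentially as formal consequences of Theorem~\ref{thm:3.4.3}, which already does all the substantive work. That theorem provides, at each point $x \in M^{n-1}$, a bijection $[K] \mapsto \beta[K]$ between the set of curvature spheres of $\lambda$ at $x$ and the set of curvature spheres of $\beta\lambda$ at $x$, and moreover asserts that corresponding curvature spheres have identical principal spaces in $T_xM^{n-1}$. Once this is in hand, the Dupin and proper Dupin conditions transfer along $\beta$ more or less mechanically.

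For part (b), I would simply observe that the pointwise bijection above implies that the integer $g(x)$ counting distinct curvature spheres at $x$ is the same for $\lambda$ and for $\beta\lambda$. Hence if this number is constant in $x$ for $\lambda$, it is constant in $x$ for $\beta\lambda$, so $\beta\lambda$ satisfies condition (b). This is the easy half.

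For part (a), the key step is to argue that $\lambda$ and $\beta\lambda$ have the same curvature surfaces as subsets of $M^{n-1}$. Indeed, a connected submanifold $S \subset M^{n-1}$ is a curvature surface of $\lambda$ iff at every $x \in S$ the tangent space $T_xS$ coincides with the principal space of some curvature sphere $[K_\lambda(x)]$ of $\lambda$; by Theorem~\ref{thm:3.4.3} the principal spaces for $\lambda$ and $\beta\lambda$ agree at each $x$, so this property is equivalent to the analogous statement for $\beta\lambda$, with corresponding curvature sphere $[K_{\beta\lambda}(x)] = \beta[K_\lambda(x)]$. Now suppose $\lambda$ is Dupin and let $S$ be a curvature surface. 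Then $x \mapsto [K_\lambda(x)]$ is constant on $S$, and since $\beta$ is a fixed projective transformation of ${\bf P}^{n+2}$, the composition $x \mapsto \beta[K_\lambda(x)] = [K_{\beta\lambda}(x)]$ is also constant on $S$. Therefore $\beta\lambda$ is Dupin.

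There is no real obstacle; the only point requiring a moment's thought is confirming that "curvature surface" is a notion defined purely in terms of the principal space decomposition of $T_xM^{n-1}$, which is manifestly preserved by Theorem~\ref{thm:3.4.3}. Because the statement is essentially a functoriality assertion, no calculation is needed beyond invoking that theorem and the linearity/fixedness of $\beta$.
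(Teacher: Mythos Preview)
Your proposal is correct and follows essentially the same approach as the paper: both derive parts (a) and (b) directly from Theorem~\ref{thm:3.4.3} by using the pointwise bijection of curvature spheres with identical principal spaces to transfer curvature surfaces and the count $g(x)$ from $\lambda$ to $\beta\lambda$. The only cosmetic difference is that the paper treats (a) before (b), whereas you reverse the order.
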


\begin{proof}
By Theorem~\ref{thm:3.4.3}, a point $[K]$ on the line $\lambda (x)$ is a curvature sphere of $\lambda$ at $x \in M$
if and only if the point $\beta [K]$ is a curvature sphere of $\beta \lambda$ at $x$, and the principal
spaces corresponding $[K]$ and  $\beta [K]$ are identical.  Since these principal spaces are the same, if $S$ is
a curvature surface of $\lambda$ corresponding to a curvature sphere map $[K]$, then $S$ is also
a curvature surface of $\beta \lambda$ corresponding to a curvature sphere map $\beta [K]$, and clearly
$[K]$ is constant along $S$ if and only if $\beta [K]$ is constant along $S$.  This proves part (a) of the theorem.
Part (b) also follows immediately from Theorem~\ref{thm:3.4.3}, since for each $x \in M$, the number $g$ of
distinct curvature spheres of $\lambda$ at $x$ equals the number of distinct curvatures spheres of $\beta \lambda$
at $x$.  So if this number $g$ is constant on $M$ for $\lambda$, then it is constant on $M$ for $\beta \lambda$.
\end{proof}

\subsection{Local constructions of Dupin hypersurfaces}
\label{sec:4.1}
Pinkall \cite{P4} introduced four constructions
for obtaining a Dupin hypersurface $W$ in ${\bf R}^{n+m}$
from a Dupin hypersurface $M$ in ${\bf R}^n$.  We first describe these constructions in the case $m=1$ as follows.

Begin with a Dupin hypersurface
$M^{n-1}$ in ${\bf R}^n$ and then consider ${\bf R}^n$
as the linear subspace ${\bf R}^n \times \{ 0 \}$
in ${\bf R}^{n+1}$.  The following 
constructions yield a Dupin hypersurface $W^n$ in
${\bf R}^{n+1}$.
\begin{enumerate}
\item[(1)] Let $W^n$ be the cylinder $M^{n-1} \times {\bf R}$ in
${\bf R}^{n+1}$.
\item[(2)] Let $W^n$ be the hypersurface in ${\bf R}^{n+1}$
obtained by rotating
$M^{n-1}$ around an axis
${\bf R}^{n-1} \subset {\bf R}^n$.
\item[(3)] Let $W^n$ be a tube of constant radius in ${\bf R}^{n+1}$ around $M^{n-1}$.
\item[(4)] Project $M^{n-1}$ stereographically onto a hypersurface
$V^{n-1} \subset S^n \subset {\bf R}^{n+1}$.  Let
$W^n$ be the cone over $V^{n-1}$ in ${\bf R}^{n+1}$.
\end{enumerate}

In general, these constructions introduce a new principal curvature 
of multiplicity one which is constant along its lines 
of curvature.  The other principal curvatures are determined by the 
principal curvatures of $M^{n-1}$, and the Dupin property is preserved
for these principal curvatures.  These constructions can be
generalized to produce a new principal curvature of multiplicity
$m$ by considering ${\bf R}^n$ as a subset of ${\bf R}^n \times
{\bf R}^m$ rather than ${\bf R}^n \times {\bf R}$. (See \cite[pp. 127--141]{Cec1} for a thorough treatment of
these constructions in the context of Lie sphere geometry.)\\

Although Pinkall defined these four constructions, his Theorem 4 \cite[p. 438]{P4} 
showed that the cone construction is redundant in the context of Lie sphere geometry, since it
is Lie equivalent 
to a tube (see Remark \ref{rem:4.2.10} below). 
For this reason, we will restrict our attention to the
three standard constructions: tubes, cylinders and surfaces of revolution.

A Dupin submanifold obtained from a lower-dimensional Dupin submanifold via one of these standard constructions is said
to be {\em reducible}.  
More generally, a Dupin submanifold which is locally Lie equivalent to such a Dupin submanifold 
is called {\em reducible}.
Pinkall \cite[p. 438]{P4} proved the following Lie geometric characterization of reducibility which has been very useful
in the study of Dupin hypersurfaces.

\begin{theorem}
\label{thm:4.2.9} 
A connected proper Dupin submanifold $\lambda: W^{n-1} \rightarrow \Lambda^{2n-1}$ is reducible if and only if there exists a curvature sphere map $[K]$ of $\lambda$ that lies in a linear subspace of ${\bf P}^{n+2}$ of codimension two.
\end{theorem}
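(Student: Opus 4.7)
The plan is to treat the two directions separately, with most of the substantive work concentrated in the converse.

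For the forward direction, I would first use that reducibility is by definition stable under Lie equivalence, while the condition ``a curvature sphere map of $\lambda$ has image in a codimension-two linear subspace of ${\bf P}^{n+2}$'' is also Lie invariant by Theorem~\ref{thm:3.4.3} and the projectivity of Lie sphere transformations. So it suffices to verify the forward implication for Pinkall's three standard constructions in their explicit Euclidean form, and in each case to exhibit the codimension-two subspace containing the ``new'' curvature sphere map $[K]$ produced by that construction. The table (\ref{eq:1.3.4}) makes all three verifications immediate: for a cylinder over $M^{n-2} \subset {\bf R}^{n-1}$ with axis direction $e_n$, the new curvature sphere is a tangent hyperplane containing the axis, giving the two linear equations $z_1 + z_2 = 0$ and $z_{n+2} = 0$; for a tube of radius $r$ around $V^{n-2} \subset {\bf R}^{n-1} \times \{0\}$, the new curvature sphere has center in ${\bf R}^{n-1} \times \{0\}$ and signed radius $r$, giving $z_{n+2} = 0$ and $z_{n+3} - r(z_1+z_2) = 0$; for a surface of revolution about the axis ${\bf R}^{n-2} \subset {\bf R}^{n-1}$, the new curvature sphere has center on the axis, giving $z_{n+1} = 0$ and $z_{n+2} = 0$.

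For the converse, suppose the image of a curvature sphere map $[K]$ lies in a codimension-two linear subspace of ${\bf P}^{n+2}$, and write this subspace as the projectivization of $\Pi^\perp$ for some two-dimensional subspace $\Pi \subset {\bf R}^{n+3}_2$. Because the image of $[K]$ lies on the Lie quadric $Q^{n+1}$, the orthogonal complement $\Pi^\perp$ must contain nonzero lightlike vectors, which rules out $\Pi$ being negative definite. Corollary~\ref{cor:2.1.3}(a) identifies the Lie sphere group with $O(n+1,2)/\{\pm I\}$, and $O(n+1,2)$ acts transitively on two-dimensional subspaces of ${\bf R}^{n+3}_2$ within each signature class, so I can apply a Lie sphere transformation to bring $\Pi$ to a chosen normal form. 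The three non-degenerate admissible cases correspond precisely to the three standard constructions: $\Pi$ positive definite of signature $(2,0)$ normalizes to $\mathrm{Span}\{e_{n+1}, e_{n+2}\}$, forcing the curvature spheres of $[K]$ to have centers in a fixed ${\bf R}^{n-2}$ --- a surface of revolution; $\Pi$ Lorentzian of signature $(1,1)$ normalizes to yield the conditions ``fixed signed radius, center in a fixed hyperplane'' --- a tube; $\Pi$ degenerate with one spacelike and one isotropic direction normalizes to $\mathrm{Span}\{e_{n+2}, e_1 - e_2\}$, yielding the conditions ``plane with a common normal direction'' --- a cylinder. Any remaining admissible signature (totally isotropic $\Pi$, or $\Pi$ containing a timelike isotropic pair) should be shown either to force a degeneracy inconsistent with $[K]$ being a genuine curvature sphere map on a proper Dupin submanifold, or to reduce to one of the three listed cases after a further Lie sphere transformation.

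The hardest step will be each of the three implications of the form ``after normalization, the image of $[K]$ lies in the standard codimension-two subspace $\Rightarrow \lambda$ is the Legendre lift of the corresponding standard construction.'' Pointwise, the two linear conditions on $[K]$ only tell us that each individual curvature sphere has the claimed special form; to upgrade this to a global description of $\lambda$ as arising from a lower-dimensional Dupin submanifold via Pinkall's construction, I would need to integrate across the curvature foliation associated to $[K]$. The key inputs will be Corollary~\ref{cor:3.4.5}(e) (constancy of $[K]$ along its leaves), the explicit formulas for the point-sphere and great-sphere maps from equations (\ref{eq:3.3.1}) and (\ref{eq:3.3.11}), and the Dupin condition on the remaining curvature spheres; these should combine, in the normalized frame, to identify $\lambda$ with the Legendre lift of the standard construction applied to an appropriate Dupin submanifold $M^{n-2}$ in one lower dimension obtained by quotienting out the leaves of the $[K]$-foliation.
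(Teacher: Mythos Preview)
Your proposal is correct and follows the same route as the paper: verify the forward direction for each of the three standard constructions via the Lie coordinates in (\ref{eq:1.3.4}), then for the converse classify the two-plane $\Pi$ by the signature of $\langle\,,\,\rangle$ (the paper dispatches the excluded signatures with the one-line observation that otherwise $\Pi^\perp \cap Q^{n+1}$ is empty or a single point), normalize by a Lie sphere transformation, and identify $A\lambda$ with a standard construction. The only notable difference is in the step you flag as hardest: rather than integrating along the curvature foliation and quotienting, the paper argues more directly that $A\lambda$, being the \emph{envelope} of the curvature-sphere family $[K]$, is automatically a surface of revolution (resp.\ cylinder, tube) once those spheres are known to have centers on a fixed axis (resp.\ be hyperplanes orthogonal to a fixed hyperplane, resp.\ have fixed radius and centers in a hyperplane), with a pointer to \cite[pp.~142--143]{Cec1} for details.
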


\begin{proof}
We first note that the following manifolds of spheres are hyperplane sections of the Lie quadric $Q^{n+1}$:\\

a) the hyperplanes in ${\bf R}^n$,

b) the spheres with a fixed signed radius $r$ in ${\bf R}^n$,

c) the spheres that are orthogonal to a fixed sphere in ${\bf R}^n$.\\

To see this, we use the Lie coordinates given in equation (\ref{eq:1.3.4}).
In Case a), the hyperplanes are characterized by the equation $x_1 + x_2 = 0$, which clearly determines a hyperplane section of $Q^{n+1}$.  In Case b), the spheres with signed radius $r$ are determined by the linear equation
\begin{displaymath}
r(x_1 + x_2) = x_{n+3}.
\end{displaymath}
In Case c), it can be assumed that the fixed sphere is a hyperplane $H$ through the origin in ${\bf R}^n$.  A sphere is orthogonal to $H$ if and only if its center lies in $H$.  This clearly imposes a linear condition on the vector in 
equation (\ref{eq:1.3.4}) representing the sphere.

The sets a), b), c) are each of the form
\begin{displaymath}
\{x \in Q^{n+1} \mid \langle x, w \rangle = 0\},
\end{displaymath}
with $\langle w,w \rangle = 0, -1, 1$ in Cases a), b), c), respectively.

We can now see that every reducible Dupin hypersurface has a family of curvature spheres that is contained in two hyperplane sections of the Lie quadric as follows.

For the cylinder construction, the tangent hyperplanes of the cylinder are curvature spheres that are orthogonal to a fixed hyperplane in ${\bf R}^n$.  Thus, that family of curvature spheres is contained in a $n$-dimensional
linear subspace $E$ of  ${\bf P}^{n+2}$ such that the signature of $\langle \ ,\ \rangle$ on the polar subspace $E^\perp$ of $E$ is $(0,+)$.

For the surface of revolution construction, the spheres in the
new family of curvature spheres all have their centers in the axis of revolution, which is a linear subspace of codimension 2 in ${\bf R}^n$.  Thus, that family of curvature spheres is contained in a $n$-dimensional linear subspace $E$ of  ${\bf P}^{n+2}$ such that the signature of $\langle \ ,\ \rangle$ on the polar subspace $E^\perp$ of $E$ is $(+,+)$.

For the tube construction, the spheres in the
the new family of curvature spheres all have the same radius, and their centers all lie in the hyperplane of ${\bf R}^n$ containing the manifold over which the tube is constructed.
Thus, that family of curvature spheres is contained in a $n$-dimensional linear subspace $E$ of  ${\bf P}^{n+2}$ such that the signature of $\langle \ ,\ \rangle$ on the polar subspace $E^\perp$ of $E$ is $(-,+)$.

Conversely, suppose that $K: W^{n-1} \rightarrow {\bf P}^{n+2}$ is a family of curvature spheres that is contained
in an $n$-dimensional linear subspace $E$ of  ${\bf P}^{n+2}$.  Then  $\langle \ ,\ \rangle$  must have signature
$(+,+)$, $(0,+)$ or $(-,+)$ on the polar subspace $E^\perp$, because otherwise $E \cap Q^{n+1}$ would be empty or would consist of a single point.

If the signature of $E^\perp$ is $(+,+)$, then there exists a Lie sphere transformation $A$ which takes  $E$ 
to a space  $F = A(E)$ such that $F \cap Q^{n+1}$ consists of all spheres that have their centers in a fixed $(n-2)$-dimensional linear subspace ${\bf R}^{n-2}$ of 
${\bf R}^n$.  Since one family of curvature spheres of this Dupin submanifold $A\lambda$ lies in $F \cap Q^{n+1}$, and the Dupin submanifold $A\lambda$ is the envelope of these spheres,  $A\lambda$ must be a surface of revolution with the axis ${\bf R}^{n-2}$ (see \cite[pp. 142--143]{Cec1} for more detail on envelopes of
families of spheres in this situation), and so $\lambda$ is reducible.

If the signature of $E^\perp$ is $(0,+)$, then there exists a Lie sphere transformation $A$ which takes  $E$ 
to a space  $F = A(E)$ such that $F \cap Q^{n+1}$ consists of hyperplanes orthogonal to a fixed hyperplane in 
${\bf R}^n$.  Since one family of curvature spheres of this Dupin submanifold $A\lambda$ lies in $F \cap Q^{n+1}$, and the Dupin submanifold $A\lambda$ is the envelope of these spheres,  $A\lambda$ is obtained as a result of the cylinder construction, and so $\lambda$ is reducible.

If the signature of $E^\perp$ is $(-,+)$, then there exists a Lie sphere transformation $A$ which takes  $E$ 
to a space  $F = A(E)$ such that $F \cap Q^{n+1}$ 
consists of spheres that all have the same radius and whose centers lie in a hyperplane
${\bf R}^{n-1}$ of ${\bf R}^n$.  Since one family of curvature spheres of this Dupin submanifold $A\lambda$ lies in $F \cap Q^{n+1}$, and the Dupin submanifold $A\lambda$ is the envelope of these spheres,  $A\lambda$ is obtained as a result of the tube construction, and so $\lambda$ is reducible.
\end{proof}

\begin{remark}
\label{rem:4.2.10} 
{\em When Pinkall introduced his constructions, he also listed the following cone construction.  
Begin with a proper Dupin
submanifold $\lambda$ induced by an embedded proper Dupin hypersurface $M^{n-1} \subset S^n \subset {\bf R}^{n+1}$.
The new Dupin submanifold $\mu$ is the Legendre submanifold induced from the cone 
$C^n$ over
$M^{n-1}$ in ${\bf R}^{n+1}$ with vertex at the origin.  Theorem \ref{thm:4.2.9} shows that this construction is
locally Lie equivalent to the tube construction as follows.  As shown in Theorem \ref{thm:4.2.9}, the
tube construction is characterized by the fact
that one curvature sphere map $[K]$ lies in a $n$-dimensional linear subspace $E$ of ${\bf P}^{n+2}$, whose 
orthogonal complement has signature $(-,+)$.  
For the cone construction, the new family $[K]$ of curvature spheres consists
of hyperplanes 
through the origin (the point $[e_1+e_2]$ in Lie sphere geometry) 
that are tangent to the cone along the rulings.  Since the hyperplanes all
pass through the improper point 
$[e_1-e_2]$ as well as the origin $[e_1+e_2]$, they correspond to points in the linear subspace $E$, whose orthogonal
complement is as follows:
\begin{displaymath}
E^{\perp} = {\mbox {\rm Span }}\{e_1+e_2, e_1-e_2\}.
\end{displaymath}
Since $E^{\perp}$ is spanned by $e_1$ and $e_2$, it has signature $(-,+)$.  Thus, the cone construction is
Lie equivalent to the tube construction.} 
\end{remark}

Using these constructions, Pinkall was able to produce a proper Dupin hypersurface 
in Euclidean space with an
arbitrary number of distinct principal curvatures, each with any given multiplicity (see Theorem \ref{thm:4.1.1}
below).  In general, the proper Dupin hypersurfaces constructed using Pinkall's constructions
cannot be extended to compact Dupin hypersurfaces without losing the
property that the number of distinct principal curvatures is constant (see  \cite[pp. 127--141]{Cec1} for more detail).
For now, we give a
proof of Pinkall's theorem without attempting to compactify the hypersurfaces constructed.

\begin{theorem}
\label{thm:4.1.1} 
Given positive integers $m_1,\ldots,m_g$ with 
\begin{displaymath}
m_1 + \cdots + m_g = n-1,
\end{displaymath}
there exists a proper Dupin hypersurface
in ${\bf R}^n$ with $g$ distinct principal curvatures having respective multiplicities $m_1,\ldots,m_g$.
\end{theorem}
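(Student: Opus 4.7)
The plan is to argue by induction on $g$, using the multi-dimensional generalizations of Pinkall's constructions that are mentioned in the excerpt (where ${\bf R}^n$ sits in ${\bf R}^n \times {\bf R}^m$ rather than ${\bf R}^n \times {\bf R}$). The tube construction is the most convenient of the three because it always produces a new principal curvature whose value is easy to control.

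Base case ($g = 1$): Take $M_1$ to be a round hypersphere $S^{m_1}(r_1) \subset {\bf R}^{m_1+1}$. This has a single principal curvature $1/r_1$ of multiplicity $m_1$, so it is proper Dupin with the desired multiplicity data.

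Inductive step: Suppose we have a proper Dupin hypersurface $M_k \subset {\bf R}^{n_k}$, where $n_k = m_1 + \cdots + m_k + 1$, having exactly $k$ distinct principal curvatures $\kappa_1, \ldots, \kappa_k$ with multiplicities $m_1, \ldots, m_k$. Embed ${\bf R}^{n_k}$ as a linear subspace of ${\bf R}^{n_{k+1}} = {\bf R}^{n_k} \times {\bf R}^{m_{k+1}}$ and let $M_{k+1}$ be the tube of small radius $\varepsilon_{k+1} > 0$ about $M_k$ in ${\bf R}^{n_{k+1}}$. By the standard computation of shape operators on tubes (cf.\ Cecil--Ryan \cite{CR8}, pp.~17--18, quoted in Example \ref{ex:3.4.6} above), the principal curvatures of $M_{k+1}$ at a point corresponding to $(x,\zeta) \in M_k \times S^{m_{k+1}}$ consist of the $\kappa_i(x)$ rescaled by $\cos\theta$ factors depending on $\zeta$ (keeping their multiplicities $m_i$) together with one new principal curvature $-1/\varepsilon_{k+1}$ of multiplicity $m_{k+1}$ arising from the normal sphere direction. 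Since $M_k$ is proper Dupin, each $\kappa_i$ is bounded, so choosing $\varepsilon_{k+1}$ sufficiently small guarantees that $|-1/\varepsilon_{k+1}|$ exceeds all the $|\cos\theta\cdot\kappa_i|$, making the new principal curvature distinct from the inherited ones at every point. Thus $M_{k+1}$ has exactly $k+1$ distinct principal curvatures with the prescribed multiplicities.

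The Dupin property propagates because the inherited curvature surfaces are built from those of $M_k$ (and the tubing parameter), along each of which the corresponding principal curvature is constant by the induction hypothesis, while the new principal curvature $-1/\varepsilon_{k+1}$ is literally constant along the $m_{k+1}$-dimensional normal sphere fibers, which are its curvature surfaces. After $g-1$ iterations we arrive at $M_g \subset {\bf R}^{n_g} = {\bf R}^n$ with $g$ distinct principal curvatures of the prescribed multiplicities, proving the theorem.

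The main technical obstacle is the verification that tubing does preserve the Dupin property and yields the claimed multiplicities; this is a local normal-bundle computation that is somewhat routine but requires checking that the principal foliations of $M_k$ lift cleanly under the tube parametrization and that no accidental coincidences among the rescaled $\kappa_i$'s arise on the open set where the tube is smooth. Any such coincidences can be avoided either by restricting to a suitable open subset of $M_{k+1}$ or by choosing the tube radius generically, so that the final hypersurface is honestly proper Dupin.
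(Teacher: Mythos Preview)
Your approach is correct and follows the same inductive strategy as the paper---build up the multiplicities one at a time using Pinkall's standard constructions---but you choose the tube construction where the paper uses the cylinder construction. In the paper's argument, each cylinder introduces a new principal curvature equal to zero, so before iterating one must first apply a M\"{o}bius inversion to move all existing curvatures away from zero; in your argument, each tube introduces the new curvature $-1/\varepsilon_{k+1}$, which (as you note, and as one checks directly from the tube formulas) can never coincide with the inherited curvatures, so no inversion is needed. The price you pay instead is the restriction to the open set where $\cos\theta \neq 0$, since at the ``poles'' $A_\zeta = 0$ and all inherited curvatures collapse (exactly the phenomenon in Example~\ref{ex:3.4.6} and Remark~\ref{rem:3.4.6}). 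Both routes therefore need to pass to open subsets at each stage, just for different reasons. Your remark about boundedness of the $\kappa_i$ is unnecessary (and would not follow once $M_k$ is noncompact): the new curvature is automatically distinct. The statement that the Dupin condition propagates under tubing is exactly what the paper takes as established for Pinkall's constructions (and verifies in Example~\ref{ex:constant -Lie-curv}), so citing that is appropriate.
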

\begin{proof}
The proof is by an inductive construction, which will be clear once the first few examples are done.  To begin, note
that a usual torus of revolution $T^2$
in ${\bf R}^3$ is a proper Dupin hypersurface with two principal curvatures of multiplicity one.  To
construct a proper Dupin hypersurface $W^3$ in ${\bf R}^4$ with three principal curvatures, each of multiplicity one,
begin with an open subset $U$ of a torus of revolution in ${\bf R}^3$ on which neither principal curvature vanishes.
Take $W^3$ to be the cylinder 
$U \times {\bf R}$ in ${\bf R}^3 \times {\bf R} = {\bf R}^4$.  Then $W^3$ has three
distinct principal curvatures at each point, one of which is zero.  These are clearly constant along their
corresponding 1-dimensional curvature surfaces.

To get a proper Dupin hypersurface in ${\bf R}^5$ with three principal curvatures having respective multiplicities
$m_1 = m_2 = 1$, $m_3 =2$, one simply takes 
\begin{displaymath}
U \times {\bf R}^2 \subset {\bf R}^3 \times {\bf R}^2 = {\bf R}^5,
\end{displaymath}
where $U$ is set defined above.
To obtain a proper Dupin hypersurface $Z^4$ in ${\bf R}^5$ with four principal curvatures of multiplicity one, 
first invert the hypersurface
$W^3$ above in a 3-sphere in ${\bf R}^4$, chosen so that the image of $W^3$ contains an open subset $V^3$ on which no
principal curvature vanishes.  The hypersurface $V^3$ is proper Dupin, since the proper Dupin property is preserved by
M\"{o}bius transformations.  Now take $Z^4$ to be the cylinder
$V^3 \times {\bf R}$ in ${\bf R}^4 \times {\bf R} = {\bf R}^5$.
\end{proof}

The proof of this theorem gives an indication of the type of problems that occur when attempting
to extend these constructions to produce a compact, proper Dupin hypersurface.  In particular,
for the cylinder construction, the new principal curvature on the constructed hypersurface
$W^n$ is identically zero.  Thus, in order for $W^n$ to be proper Dupin, 
either zero is not a principal curvature at any point 
of the original hypersurface $M^{n-1}$, or else zero is a principal curvature of
constant multiplicity on $M^{n-1}$.  Otherwise, the principal curvature zero will not have constant
multiplicity on $W^n$, which implies that $W^n$ is not proper Dupin.

\subsection{Lie curvatures of Dupin hypersurfaces}
\label{sec:3.5}
In this section, we introduce certain natural Lie invariants
of Legendre submanifolds which have been useful
in the study of Dupin and isoparametric hypersurfaces.

Let $\lambda: M^{n-1} \rightarrow \Lambda^{2n-1}$ be an arbitrary Legendre submanifold.  As before, we can write
$\lambda = [Y_1, Y_2]$ with
\begin{equation}
\label{eq:3.5.1}
Y_1 = (1, f, 0), \quad Y_2 = (0, \xi, 1),
\end{equation} 
where $f$ and $\xi$ are the spherical projection and spherical field of unit normals, respectively.
At each point
$x \in M^{n-1}$, the points on the line $\lambda(x)$ can be written in the form,
\begin{equation}
\label{eq:3.5.2}
\mu Y_1 (x) +  Y_2 (x),
\end{equation} 
i.e., take $\mu$ as an inhomogeneous coordinate along the projective line $\lambda(x)$. Note that $k_1$
corresponds to $\mu = \infty$.  

The next two theorems give the relationship between the coordinates of the 
curvature spheres of $\lambda$ and the 
principal curvatures of $f$, in the case where 
$f$ has constant rank.  In the first
theorem, we assume that the spherical projection $f$ is an immersion on $M^{n-1}$.  By Theorem~\ref{thm:3.4.4},
we know that this can always be achieved locally by passing to a parallel submanifold.

\begin{theorem}
\label{thm:3.5.1} 
Let $\lambda: M^{n-1} \rightarrow \Lambda^{2n-1}$ be a Legendre submanifold whose spherical projection 
$f:M^{n-1} \rightarrow S^n$ is an immersion.  Let $Y_1$ and $Y_2$ be the point sphere and great sphere maps
of $\lambda$ as in equation (\ref{eq:3.5.1}).  Then the curvature spheres of $\lambda$ at a point $x \in M^{n-1}$ are
\begin{displaymath}
[K_i] = [\kappa_i Y_1 + Y_2], \quad 1 \leq i \leq g,
\end{displaymath}
where $\kappa_1,\ldots,\kappa_g$ are the distinct principal curvatures at $x$ of the oriented hypersurface $f$
with field of unit normals $\xi$.  The multiplicity of the curvature sphere $[K_i]$ equals the multiplicity
of the principal curvature $\kappa_i$.
\end{theorem}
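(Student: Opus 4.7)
The plan is to prove the theorem by directly unwinding the definition of curvature sphere from Section 3.4, applied to the ansatz $[K] = [\mu Y_1 + Y_2]$, and then using the shape operator equation (\ref{eq:3.4.1}) to translate the resulting condition into the eigenvalue equation for $A$.

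First I would fix $x \in M^{n-1}$ and compute the differentials of the given parametrization. Since $Y_1 = (1, f, 0)$ and $Y_2 = (0, \xi, 1)$, for any $X \in T_xM^{n-1}$,
\begin{displaymath}
dY_1(X) = (0, df(X), 0), \qquad dY_2(X) = (0, d\xi(X), 0).
\end{displaymath}
Both differentials have vanishing first and last coordinates. Now a general element of $\mathrm{Span}\{Y_1(x), Y_2(x)\}$ has the form $(a, af(x) + b\xi(x), b)$; comparing first and last coordinates, a vector of the form $(0, v, 0)$ lies in this span if and only if $a = b = 0$, i.e., only if the vector is zero. Thus the curvature sphere condition (\ref{eq:3.4.7}) for $[K] = [\mu Y_1 + Y_2]$ reduces to
\begin{displaymath}
\mu \, df(X) + d\xi(X) = 0
\end{displaymath}
for some nonzero $X \in T_xM^{n-1}$.

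Next I would apply the shape operator relation $d\xi(X) = -df(AX)$ to rewrite the displayed equation as $df(\mu X - AX) = 0$. Because $f$ is an immersion, $df$ is injective, so this is equivalent to $AX = \mu X$. Hence $[K] = [\mu Y_1 + Y_2]$ is a curvature sphere at $x$ if and only if $\mu$ is a principal curvature of $f$ at $x$; moreover, the corresponding principal space (in the sense of Section 3.4) coincides with the eigenspace of $A$ for the eigenvalue $\mu$, so multiplicities agree. Letting $\mu$ range over $\kappa_1, \ldots, \kappa_g$ yields the claimed curvature spheres $[K_i] = [\kappa_i Y_1 + Y_2]$ with the asserted multiplicities.

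To finish, I would rule out the remaining candidate, namely $[K] = [Y_1]$ (the case $s=0$, $\mu = \infty$), as a curvature sphere. By the same coordinate argument, $dY_1(X) \in \mathrm{Span}\{Y_1(x), Y_2(x)\}$ would force $df(X) = 0$, contradicting the immersion hypothesis. The only mildly delicate point in the argument is the linear-algebra observation that first and last coordinates suffice to detect membership in $\mathrm{Span}\{Y_1, Y_2\}$, but this is immediate from the explicit forms of $Y_1$ and $Y_2$, so I do not anticipate any real obstacle: the theorem is essentially a translation between the Lie-sphere definition of curvature sphere and the Euclidean definition of principal curvature via the shape operator.
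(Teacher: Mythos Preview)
Your argument is correct and follows essentially the same route as the paper's proof: compute $d(\mu Y_1 + Y_2)(X) = (0,\mu\,df(X)+d\xi(X),0)$, observe this lies in $\mathrm{Span}\{Y_1,Y_2\}$ only when the middle component vanishes, and identify that condition with the eigenvalue equation for the shape operator. Your treatment is slightly more explicit than the paper's---you spell out the coordinate comparison, the use of the immersion hypothesis, and the exclusion of the $[Y_1]$ case---but the underlying strategy is identical.
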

\begin{proof}
Let $X$ be a nonzero vector in $T_xM^{n-1}$.  Then for any real number $\mu$,
\begin{displaymath}
d(\mu Y_1  +  Y_2)(X) = (0, \mu \ df(X) + d\xi(X), 0). 
\end{displaymath}
This vector is in Span $\{Y_1(x), Y_2(x)\}$ if and only if
\begin{displaymath}
\mu \ df(X) + d\xi(X) = 0, 
\end{displaymath}
i.e., $\mu$ is a principal curvature of $f$ with corresponding principal vector $X$.
\end{proof}

A second noteworthy case is when the point sphere map $Y_1$ is a curvature sphere of constant multiplicity $m$ on
$M^{n-1}$.  By Corollary \ref{cor:3.4.5}, the corresponding principal distribution is a foliation, and the 
curvature sphere map $[Y_1]$ is constant along the leaves of this foliation.  Thus the map $[Y_1]$ factors through
an immersion $[W_1]$ from the space of leaves 
$V$ of this foliation into $Q^{n+1}$.  We can write
\begin{displaymath}
W_1 = (1, \phi, 0),
\end{displaymath}
where $\phi:V \rightarrow S^n$ is an immersed submanifold of codimension $m+1$. The manifold $M^{n-1}$ is locally
diffeomorphic to an open subset of the unit normal 
bundle $B^{n-1}$ of the submanifold $\phi$, and $\lambda$ is essentially
the Legendre lift of the submanifold $\phi (V)$, as defined on page \pageref{Legendre-lift}.  
The following theorem relates the curvature spheres
of $\lambda$ to the principal curvatures of $\phi$.  Recall that the point sphere and great sphere maps for $\lambda$
are given as in equation (\ref{eq:3.3.3}) by
\begin{equation}
\label{eq:3.5.3}
Y_1(x,\xi) = (1, \phi(x), 0), \quad Y_2(x,\xi) = (0, \xi, 1),
\end{equation}
for $(x,\xi) \in B^{n-1}$.
The proof of Theorem \ref{thm:3.5.2}  is similar to the proof of Theorem \ref{thm:3.5.1}  above, but it requires putting local coordinates on the unit normal bundle $B^{n-1}$, and we omit it here (see \cite[p. 74]{Cec1}).

\begin{theorem}
\label{thm:3.5.2} 
Let $\lambda: B^{n-1} \rightarrow \Lambda^{2n-1}$ be the Legendre lift
of the immersed submanifold $\phi(V)$ in $S^n$
of codimension $m+1$.  Let $Y_1$ and $Y_2$ be the point sphere and great sphere maps
of $\lambda$ as in equation (\ref{eq:3.5.3}).  Then the curvature spheres of $\lambda$ at a point $(x, \xi) \in B^{n-1}$ are
\begin{displaymath}
[K_i] = [\kappa_i Y_1 + Y_2], \quad 1 \leq i \leq g,
\end{displaymath}
where $\kappa_1,\ldots,\kappa_{g-1}$ are the distinct principal curvatures of the shape operator $A_\xi$,
and $\kappa_g = \infty$.  For $1 \leq i \leq g-1$, the multiplicity of the curvature sphere $[K_i]$ equals the multiplicity
of the principal curvature $\kappa_i$, while the multiplicity of $[K_g]$ is $m$.
\end{theorem}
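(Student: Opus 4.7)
The plan is to mimic the proof of Theorem~\ref{thm:3.5.1} above, but working on the unit normal bundle $B^{n-1}$ rather than directly on a hypersurface in $S^n$. The first step is to describe tangent vectors to $B^{n-1}$ at a point $(x,\xi)$ in a usable form. Since $\dim V = n-1-m$ and the fiber of $B^{n-1}\to V$ over $x$ is the unit sphere in the $(m+1)$-dimensional normal space $N_x(V)\subset T_{\phi(x)}S^n$, a tangent vector $X$ at $(x,\xi)$ can be represented by a pair $(X_V, \eta)$, where $X_V\in T_xV$ and $\eta\in T_\xi S^n$ records the infinitesimal change of $\xi$. Differentiating the defining relations $\xi\cdot\phi(x)=0$ and $\xi\cdot d\phi(Z)=0$ along a curve realizing $X$ shows $\eta\cdot\phi(x)=0$, and that the component of $\eta$ tangent to $\phi(V)$ equals $-d\phi(A_\xi X_V)$ by the Weingarten formula. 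Thus $\eta$ decomposes as $\eta = -d\phi(A_\xi X_V) + \eta^N$, where $\eta^N$ ranges freely over the $m$-dimensional ``vertical'' subspace $N_x(V)\cap\xi^\perp$.

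With this parametrization in hand, a direct computation gives
\begin{displaymath}
dY_1(X) = (0,\ d\phi(X_V),\ 0), \qquad dY_2(X) = (0,\ -d\phi(A_\xi X_V) + \eta^N,\ 0).
\end{displaymath}
For a finite scalar $\mu$, the sphere $[\mu Y_1+Y_2]$ is a curvature sphere iff $\mu\, dY_1(X) + dY_2(X) \in \mathrm{Span}\{Y_1(x,\xi),Y_2(x,\xi)\}$ for some nonzero $X$. Inspecting the first and last coordinates shows that a vector with those entries zero lies in the span only if it is zero, so the condition reduces to $d\phi((\mu I - A_\xi)X_V) + \eta^N = 0$. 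Splitting this into its tangential and normal components relative to $\phi(V)\subset S^n$ and using that $\phi$ is an immersion yields $\eta^N=0$ and $(\mu I - A_\xi)X_V = 0$; equivalently, $\mu$ is an eigenvalue of the shape operator $A_\xi$ with principal vector $X_V$. This recovers $[K_i] = [\kappa_i Y_1 + Y_2]$ for $1\le i\le g-1$ with multiplicities equal to those of $\kappa_i$ as an eigenvalue of $A_\xi$.

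Finally, the point sphere $[Y_1]$ (corresponding to $\mu=\infty$) is a curvature sphere exactly when $dY_1(X)\in\mathrm{Span}\{Y_1,Y_2\}$ for some nonzero $X$, which forces $d\phi(X_V)=0$ and hence $X_V=0$, while placing no restriction on $\eta^N$. The principal space is therefore the entire vertical fiber, so $[K_g] = [Y_1]$ has multiplicity $m$, as claimed; note that the total $\sum m_i = (n-1-m) + m = n-1 = \dim B^{n-1}$ checks out. The only step requiring real care is the Weingarten decomposition of $\eta$: one must verify both that $X_V$ forces the tangential part of $\eta$ via $A_\xi$ and that $\eta^N$ can be chosen freely in $N_x(V)\cap\xi^\perp$. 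Once these facts are secured, the remaining argument is the same linear-algebra bookkeeping used in Theorem~\ref{thm:3.5.1}.
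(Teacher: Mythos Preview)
Your proposal is correct and follows essentially the approach the paper indicates: the paper omits the proof entirely, saying only that it ``is similar to the proof of Theorem~\ref{thm:3.5.1} above, but it requires putting local coordinates on the unit normal bundle $B^{n-1}$,'' referring to \cite[p.~74]{Cec1}. Your argument does precisely this---it identifies $T_{(x,\xi)}B^{n-1}$ with $T_xV \oplus (N_x(V)\cap \xi^\perp)$ via the Weingarten decomposition and then repeats the linear-algebra step from Theorem~\ref{thm:3.5.1}---so there is nothing further to compare.
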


Given these two theorems, we define a 
{\em principal curvature} of a Legendre submanifold
$\lambda: M^{n-1} \rightarrow \Lambda^{2n-1}$ at a point $x \in M^{n-1}$ to be a value $\kappa$ in the set
${\bf R} \cup \{\infty \}$ such that $[\kappa Y_1(x) + Y_2 (x)]$ is a curvature sphere of $\lambda$ at $x$,
where $Y_1$ and $Y_2$ are as in equation (\ref{eq:3.5.1}).

These principal curvatures of a Legendre submanifold are not Lie invariant, and 
they depend on the special parametrization
for $\lambda$ given in equation (\ref{eq:3.5.1}).  However, Miyaoka 
\cite{Mi3} pointed out that the cross-ratios of the
principal curvatures are Lie invariant.  

In order to formulate Miyaoka's theorem, we need to introduce some
notation.  Suppose that $\beta$ is a Lie sphere transformation.  The Legendre submanifold $\beta \lambda$ has
point sphere and great sphere maps given, respectively, by
\begin{displaymath}
Z_1 = (1, h, 0), \quad Z_2 = (0, \zeta, 1),
\end{displaymath} 
where $h$ and $\zeta$ are the spherical projection
and spherical field of unit normals for $\beta \lambda$.  Suppose that
\begin{displaymath}
[K_i] = [\kappa_i Y_1 + Y_2], \quad 1 \leq i \leq g,
\end{displaymath}
are the distinct curvature spheres of $\lambda$ at a point $x \in M^{n-1}$.  By Theorem~\ref{thm:3.4.3}, the points
$\beta [K_i], 1 \leq i \leq g$, are the distinct curvature spheres of $\beta \lambda$ at $x$.  We can write
\begin{displaymath}
\beta [K_i] = [\gamma_i Z_1 + Z_2], \quad 1 \leq i \leq g.
\end{displaymath}
These $\gamma_i$ are the principal curvatures of $\beta \lambda$ at $x$.

For four distinct numbers $a,b,c,d$ in ${\bf R} \cup \{\infty\}$, we adopt the notation
\begin{equation}
\label{eq:3.5.4}
[a,b;c,d] = \frac{(a-b)(d-c)}{(a-c)(d-b)}
\end{equation} 
for the cross-ratio of $a,b,c,d$.  We use the usual conventions involving operations with $\infty$.
For example, if $d= \infty$, then the expression $(d-c)/(d-b)$ evaluates to one,
and the cross-ratio $[a,b;c,d]$ equals $(a-b)/(a-c)$.

Miyaoka's theorem can now be stated as follows.

\begin{theorem}
\label{thm:3.5.4} 
Let $\lambda: M^{n-1} \rightarrow \Lambda^{2n-1}$ be a Legendre submanifold and $\beta$ a Lie sphere transformation.
Suppose that $\kappa_1,\ldots,\kappa_g, g \geq 4,$ are the distinct principal curvatures of $\lambda$ at a point
$x \in M^{n-1}$, and $\gamma_1,\ldots,\gamma_g$ are the corresponding principal curvatures of $\beta \lambda$ at
$x$.  Then for any choice of four numbers $h,i,j,k$ from the set $\{1,\ldots,g\}$, we have
\begin{equation}
\label{eq:3.5.5}
[\kappa_h,\kappa_i;\kappa_j, \kappa_k] = [\gamma_h,\gamma_i;\gamma_j, \gamma_k].
\end{equation} 
\end{theorem}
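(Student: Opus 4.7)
The plan is to express both $\kappa_i$ and $\gamma_i$ in terms of a single parametrization of $\beta\lambda$, and then observe that the change-of-basis relating the two parametrizations induces a fractional linear transformation on the principal curvatures. Since cross-ratios are preserved by fractional linear transformations of $\mathbf{R}\cup\{\infty\}$, the theorem will follow.

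Concretely, let $B \in O(n+1,2)$ be an orthogonal transformation inducing $\beta$. The pair $(W_1, W_2) = (BY_1, BY_2)$ satisfies Pinkall's Conditions (1)--(3) and thus parametrizes $\beta\lambda$, but in general $W_1$ is not the point sphere map of $\beta\lambda$ and $W_2$ is not its great sphere map. Letting $(Z_1, Z_2)$ denote the true point sphere and great sphere maps of $\beta\lambda$ as in equation (\ref{eq:3.5.1}), the reparametrization property noted on page~\pageref{PC} gives smooth functions $a,b,c,d$ on $M^{n-1}$ with $ad-bc \neq 0$ such that
\begin{equation*}
Z_1 = a\, W_1 + b\, W_2, \qquad Z_2 = c\, W_1 + d\, W_2.
\end{equation*}
By Theorem~\ref{thm:3.4.3}, the curvature sphere $[K_i] = [\kappa_i Y_1 + Y_2]$ of $\lambda$ at $x$ maps to the curvature sphere $\beta[K_i] = [\kappa_i W_1 + W_2]$ of $\beta\lambda$ at $x$. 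But by the definition of $\gamma_i$, this same point equals $[\gamma_i Z_1 + Z_2]$. Substituting the formulas for $Z_1, Z_2$ and using that $W_1(x), W_2(x)$ are linearly independent in $\mathbf{R}^{n+3}_2$, one obtains the relation
\begin{equation*}
\kappa_i \;=\; \frac{a\,\gamma_i + c}{b\,\gamma_i + d},
\end{equation*}
valid at the point $x$ for every $i = 1,\dots,g$.

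Thus all $g$ principal curvatures $\kappa_i$ are obtained from the $\gamma_i$ by applying the single fractional linear transformation $T(t) = (at+c)/(bt+d)$, which is a projective automorphism of $\mathbf{R}\cup\{\infty\}$ since $ad-bc \neq 0$. It is a standard fact that such automorphisms preserve the cross-ratio defined in (\ref{eq:3.5.4}), i.e., $[T(\gamma_h),T(\gamma_i);T(\gamma_j),T(\gamma_k)] = [\gamma_h,\gamma_i;\gamma_j,\gamma_k]$, which yields the desired identity (\ref{eq:3.5.5}).

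The main obstacle is bookkeeping around the point at infinity: some $\kappa_i$ or $\gamma_i$ may equal $\infty$ (this occurs precisely when the corresponding curvature sphere coincides with the relevant point sphere map, as in Theorem~\ref{thm:3.5.2}), and the displayed identification of $\kappa_i$ with $(a\gamma_i+c)/(b\gamma_i+d)$ must then be interpreted via the usual limiting conventions. One handles this cleanly by working projectively: write each curvature sphere uniquely as $[\kappa_i : 1]$ or $[1 : 0]$ in the line $\lambda(x)$, and verify that the $2\times 2$ matrix $\begin{pmatrix} a & c \\ b & d \end{pmatrix}$ acts on these homogeneous pairs by the same linear change-of-basis inherited from the reparametrization of $(W_1,W_2)$ into $(Z_1,Z_2)$. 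Cross-ratio invariance on $\mathbf{P}^1$ under $GL(2,\mathbf{R})$ then gives the result uniformly, with no case distinctions.
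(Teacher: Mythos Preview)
Your proof is correct and rests on the same idea as the paper's---the projective invariance of the cross-ratio---but the paper's argument is a one-liner at a higher level of abstraction: it simply notes that the $\kappa$'s are the inhomogeneous coordinates of the curvature spheres $[K_h],[K_i],[K_j],[K_k]$ on the projective line $\lambda(x)$, the $\gamma$'s are the coordinates of their images on $\beta\lambda(x)$, and the projective transformation $\beta$ preserves the cross-ratio of four collinear points. Your explicit derivation of the fractional linear relation $\kappa_i=(a\gamma_i+c)/(b\gamma_i+d)$ via the change of frame $(W_1,W_2)\to(Z_1,Z_2)$ is exactly what underlies that sentence, and your handling of the $\infty$ case by passing to homogeneous coordinates on $\mathbf{P}^1$ is the right way to make it airtight; the paper just takes this machinery for granted.
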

\begin{proof}
The left side of equation (\ref{eq:3.5.5}) is the cross-ratio, in the sense of projective geometry, of the four points
$[K_h],[K_i],[K_j],[K_k]$ on the projective line $\lambda(x)$.  The right side of equation (\ref{eq:3.5.5}) 
is the cross-ratio
of the images of these four points under $\beta$.  The theorem now follows from the fact that the projective
transformation $\beta$ preserves the cross-ratio of four points on a line.
\end{proof}

The cross-ratios of the principal curvatures of $\lambda$ are called the 
{\em Lie curvatures} of $\lambda$.  A set of
related invariants for the M\"{o}bius group 
is obtained as follows.  First, recall that a M\"{o}bius transformation
is a Lie sphere transformation that takes point spheres to point spheres.  Hence the transformation $\beta$
in Theorem~\ref{thm:3.5.4} is a M\"{o}bius transformation 
if and only if $\beta [Y_1] = [Z_1]$.  This leads to the following corollary of Theorem~\ref{thm:3.5.4}.

\begin{corollary}
\label{cor:3.5.5} 
Let $\lambda: M^{n-1} \rightarrow \Lambda^{2n-1}$ be a Legendre submanifold and $\beta$ a M\"{o}bius transformation.
Then for any three distinct principal curvatures $\kappa_h,\kappa_i,\kappa_j$ of $\lambda$ at a point
$x \in M^{n-1}$, none of which equals $\infty$, we have
\begin{equation}
\label{eq:3.5.6}
\Phi(\kappa_h,\kappa_i,\kappa_j) = (\kappa_h - \kappa_i)/(\kappa_h - \kappa_j) = (\gamma_h - \gamma_i)/(\gamma_h - \gamma_j),
\end{equation}
where $\gamma_h, \gamma_i$ and $\gamma_j$ are the corresponding principal curvatures of  $\beta \lambda$ at the point $x$.
\end{corollary}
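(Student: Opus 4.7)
The plan is to reduce this immediately to Theorem~\ref{thm:3.5.4} by exploiting the defining property of a M\"obius transformation. By hypothesis $\beta$ is M\"obius, which, as noted in the paragraph preceding the corollary, means precisely that $\beta[Y_1] = [Z_1]$. Thus, in the parametrization $\mu Y_1 + Y_2$ of the line $\lambda(x)$, the parameter value $\mu = \infty$ (representing $[Y_1]$) is carried by $\beta$ to the parameter value $\mu = \infty$ (representing $[Z_1]$) in the corresponding parametrization $\mu Z_1 + Z_2$ of $\beta\lambda(x)$.

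I would then apply projective invariance of the cross-ratio to the four distinct collinear points $[K_h], [K_i], [K_j], [Y_1]$ on $\lambda(x)$, whose parameter values are $\kappa_h, \kappa_i, \kappa_j, \infty$. These are distinct because by assumption the three $\kappa$'s are finite and mutually distinct. Their images under $\beta$ are $\beta[K_h], \beta[K_i], \beta[K_j], [Z_1]$, with parameter values $\gamma_h, \gamma_i, \gamma_j, \infty$. A brief check, using that $\beta^{-1}$ is itself a M\"obius transformation, confirms that each $\gamma$ is finite: otherwise some $\beta[K_s] = [Z_1]$ would force $[K_s] = \beta^{-1}[Z_1] = [Y_1]$, contradicting $\kappa_s \neq \infty$.

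Since $\beta$ is the restriction of a projective transformation, it preserves the cross-ratio of these four collinear points, exactly as in the proof of Theorem~\ref{thm:3.5.4}. Using the convention following equation~(\ref{eq:3.5.4}) when one argument is $\infty$,
\begin{displaymath}
[\kappa_h, \kappa_i; \kappa_j, \infty] = \frac{\kappa_h - \kappa_i}{\kappa_h - \kappa_j}, \qquad [\gamma_h, \gamma_i; \gamma_j, \infty] = \frac{\gamma_h - \gamma_i}{\gamma_h - \gamma_j},
\end{displaymath}
and equating these two quantities yields the claimed identity. There is essentially no obstacle here: once one identifies $\infty$ with the (M\"obius-invariant) point sphere class, the corollary follows at once from the projective invariance of the cross-ratio, which is the same mechanism already used in Theorem~\ref{thm:3.5.4}.
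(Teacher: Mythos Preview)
Your proof is correct and follows essentially the same approach as the paper: both use that a M\"obius transformation sends $[Y_1]$ (parameter $\infty$) to $[Z_1]$ (parameter $\infty$), and then invoke projective invariance of the cross-ratio on the four points with parameters $\kappa_h,\kappa_i,\kappa_j,\infty$. Your added check that the $\gamma$'s are finite is a nice extra detail the paper omits.
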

\begin{proof}
First, note that we are using equation (\ref{eq:3.5.6}) to define the quantity $\Phi$.  Now since $\beta$ is a
M\"{o}bius transformation, the point $[Y_1]$, corresponding to $\mu = \infty$, is taken by $\beta$ to the point
$Z_1$ with coordinate $\gamma = \infty$.  Since $\beta$ preserves cross-ratios, we have

\begin{equation}
\label{eq:moeb-curv}
[\kappa_h,\kappa_i;\kappa_j, \infty] = [\gamma_h,\gamma_i;\gamma_j, \infty].
\end{equation} 
The corollary now follows since the cross-ratio on the left in (\ref{eq:moeb-curv}) equals the left side of equation
(\ref{eq:3.5.6}), and the cross-ratio on the right in (\ref{eq:moeb-curv}) equals the right side of equation 
(\ref{eq:3.5.6}).
\end{proof}

A ratio $\Phi$ of the form (\ref{eq:3.5.6}) is called a 
{\em M\"{o}bius curvature} of $\lambda$.  Lie and M\"{o}bius
curvatures have been useful in characterizing Legendre submanifolds that are 
Lie equivalent to Legendre lifts of
isoparametric hypersurfaces in spheres, as we will see in the next section.

\section{Classifications of Dupin Hypersurfaces}
\label{chap:2}
Many of the important classification results for Dupin hypersurfaces involve conditions under which the
Dupin hypersurface is Lie equivalent to the Legendre lift of an isoparametric hypersurface in a sphere.  These results 
will be discussed in this section.  We begin with a brief summary of the classification results for
isoparametric hypersurfaces in spheres.

\subsection{Isoparametric hypersurfaces in spheres}
\label{sec:3.6}
An oriented hypersurface $M$ in a real space form ${\bf R}^n$, $S^n$ or $H^n$ is called an isoparametric hypersurface
if it has constant principal curvatures (see, for example, \cite[pp. 1--3]{CR8} for other characterizations).

An isoparametric hypersurface $M$ in ${\bf R}^n$ can have at most
two distinct principal curvatures, and $M$ must be an open 
subset of a hyperplane, hypersphere or a spherical cylinder
$S^k \times {\mathbf R}^{n-k-1}$.  
This was first proven for $n=3$ by Somigliana \cite{Som} in 1919
(see also Levi-Civita \cite{Lev} (1937) for $n=3$ and B. Segre \cite{Seg} (1938) for arbitrary $n$).

Shortly after the publication of the papers of Levi-Civita and Segre, 
Cartan \cite{Car2}--\cite{Car5} undertook the study of isoparametric hypersurfaces in arbitrary
real space-forms.
Cartan showed that an isoparametric hypersurface in ${\bf R}^n$ or $H^n$ can have at most two distinct principal curvatures, and he gave a local classification of isoparametric hypersurfaces in both ambient spaces
(see \cite[pp. 85--101]{CR8}).

In the sphere $S^n$, however, Cartan showed that there are many more possibilities.
He found examples of isoparametric hypersurfaces in $S^n$ with $1,2,3$ or 4 distinct principal curvatures, and he
classified connected isoparametric hypersurfaces with $g \leq 3$ principal curvatures as follows.  If $g=1$,
then the isoparametric hypersurface $M$ is totally umbilic, and it must be an open subset of a great or small
sphere.  If $g=2$, then $M$ must be an open subset of a standard product of two spheres,
\begin{displaymath}
S^k(r) \times S^{n-k-1}(s) \subset S^n, \quad r^2+s^2=1.
\end{displaymath}

In the case $g=3$, Cartan \cite{Car3}
showed that all the principal curvatures must have the same multiplicity
$m=1,2,4$ or 8, and the isoparametric hypersurface must be an open subset of a tube of
constant radius 
over a standard embedding of a projective
plane ${\bf F}P^2$ into $S^{3m+1}$ (see, for example, Cecil--Ryan \cite[pp. 151--155]{CR8}), 
where ${\bf F}$ is the division algebra
${\bf R}$, ${\bf C}$, ${\bf H}$ (quaternions),
${\bf O}$ (Cayley numbers), for $m=1,2,4,8,$ respectively.  Thus, up to
congruence, there is only one such family for each value of $m$.

Cartan's theory was further developed by Nomizu \cite{Nom3}--\cite{Nom4},
Takagi and Takahashi \cite{TT}, Ozeki and Takeuchi \cite{OT}, 
and most extensively by M\"{u}nzner \cite{Mu}--\cite{Mu2} (see also the English translations \cite{Mu}--\cite{Mu2}),
who showed that the number $g$ of distinct principal curvatures of an isoparametric hypersurface must be
$1,2,3,4$ or 6.
For more detail on the theory of isoparametric hypersurfaces, see
Chapter 3 of Cecil--Ryan 
\cite{CR8} or Chapter 3 of \cite{CR7}.  

In the case of an 
isoparametric hypersurface with four principal curvatures, 
M\"{u}nzner proved that
the principal curvatures can have at most two
distinct multiplicities $m_1,m_2$. Next Ferus, Karcher and M\"{u}nzner
\cite{FKM} (see also the English translation \cite{FKM})
used representations
of Clifford algebras to construct
for any positive integer
$m_1$ an infinite series of isoparametric hypersurfaces
with four principal curvatures having respective multiplicities
$(m_1,m_2)$, where $m_2$ is nondecreasing and
unbounded in each series.  (These examples are also described in
\cite[pp. 95--112]{Cec1} and \cite[pp. 162--180]{CR8}.)  

As later work by several researchers would show (see below), this class of
{\it FKM-type} isoparametric hypersurfaces contains all isoparametric
hypersurfaces with four principal curvatures with the exception of two
homogeneous examples, having multiplicities $(2,2)$ and
$(4,5)$.  
This construction of Ferus, Karcher and M\"{u}nzner was a generalization of an earlier construction due to Ozeki and Takeuchi \cite{OT}. 
  
Stolz \cite{Stolz} next proved that the multiplicities $(m_1,m_2)$
of the principal curvatures of an isoparametric
hypersurface with four principal curvatures
must be the same as those of the hypersurfaces of
FKM-type or the two homogeneous exceptions.
Cecil, Chi and Jensen \cite{CCJ1} then
showed that if the multiplicities of an isoparametric hypersurface with four principal curvatures satisfy $m_2 \geq 2 m_1 - 1$, then
the hypersurface is of FKM-type.  (A different proof of this result, using isoparametric 
triple systems, was given later by Immervoll \cite{Im}.)

Taken together with known results of Takagi \cite{Takagi} for $m_1 = 1$,
and Ozeki and Takeuchi \cite{OT} for $m_1 = 2$, this result of Cecil, Chi and Jensen handled all
possible pairs of multiplicities except for four cases, the homogeneous pair $(4,5)$, and the FKM pairs
$(3,4), (6,9)$ and $(7,8)$.
In a series of recent papers, Chi \cite{Chi}--\cite{Chi4} completed the classification of isoparametric hypersurfaces with four principal curvatures.
Specifically, Chi showed that in the cases $(3,4)$, $(6,9)$ and $(7,8)$, the isoparametric hypersurface must be of FKM-type, and in the case $(4,5)$, it must be homogeneous.

In the case of an isoparametric hypersurface with six principal curvatures, M\"{u}nzner showed
that all of the principal curvatures must have the same multiplicity $m$, and
Abresch \cite{Ab} showed that $m$ must equal 1 or 2.
By the classification of homogeneous isoparametric hypersurfaces due to Takagi and Takahashi \cite{TT}, 
there is only one homogeneous family in each case up to congruence.  These 
homogeneous examples have been shown to be
the only isoparametric hypersurfaces in the case $g=6$ by Dorfmeister and Neher \cite{DN5}
in the case of multiplicity $m=1$, and by Miyaoka \cite{Mi11}--\cite{Mi12} in the case $m=2$
(see also the papers of Siffert \cite{Siffert1}--\cite{Siffert2}).
See the surveys of Thorbergsson \cite{Th6}, Cecil \cite{Cec9}, and Chi \cite{Chi-survey} for more information on isoparametric hypersurfaces in spheres.

\subsection{Compact proper Dupin hypersurfaces}
\label{sec:3.7}

There is a close relationship between the theory of isoparametric hypersurfaces and the theory
of compact proper Dupin hypersurfaces embedded in $S^n$ (or ${\bf R}^n$).
Thorbergsson \cite{Th1} showed that the restriction $g = 1,2,3,4$ or 6 on the number of distinct principal
curvatures of an isoparametric hypersurface
also holds for a connected, compact proper Dupin hypersurface $M$ embedded in $S^n \subset {\bf R}^{n+1}$.
He first showed that $M$ must be {\em taut}, i.e., 
every nondegenerate distance function
$L_p (x) = d(p,x)^2$, where $d(p,x)$ is the distance from $p$ to $x$ in $S^n$,
has the minimum number of critical points required by the Morse inequalities 
on $M$.  Using tautness, he then showed that $M$ divides $S^n$ into two ball bundles over the first focal
submanifolds on either side of $M$.  This topological situation is what is required for 
M\"{u}nzner's proof of the restriction on $g$.  (See also \cite[65--74]{CR8}.)

M\"{u}nzner's argument also produces certain restrictions on the cohomology and the
homotopy groups of isoparametric hypersurfaces.  These restrictions necessarily apply to compact proper
Dupin hypersurfaces by Thorbergsson's result.  Grove and Halperin \cite{GH} 
later found more topological similarities
between these two classes of hypersurfaces.  Furthermore, the results of Stolz \cite{Stolz} 
and Grove-Halperin \cite{GH} on the possible multiplicities
of the principal curvatures actually only require the assumption that $M$ is a compact proper Dupin hypersurface, and not
that the hypersurface is isoparametric.

The close relationship between these two classes of hypersurfaces led to the widely held conjecture that every
compact proper Dupin hypersurface $M \subset S^n$ is Lie equivalent to an isoparametric
hypersurface (see \cite[p. 184]{CR7}).  We now describe the results that have been obtained concerning this 
conjecture. (See \cite{Cec10} for a full account.)

The conjecture is obviously true for $g=1$, in which case the compact proper Dupin hypersurface $M \subset S^n$
must be a hypersphere in $S^n$, and so $M$ itself is isoparametric.  In 1978, Cecil and Ryan \cite{CR2}
showed that if $g=2$, then $M$ must be an $(n-1)$-dimensional cyclide of Dupin, 
and it is therefore M\"{o}bius equivalent to an isoparametric
hypersurface in $S^n$.  Then in 1984, Miyaoka \cite{Mi1} showed that the conjecture is true for $g=3$, that is, $M$ must be
Lie equivalent an isoparametric hypersurface, although $M$ is not necessarily M\"{o}bius equivalent to an isoparametric hypersurface. Thus, as $g$ increases, the group needed 
to obtain equivalence with an isoparametric hypersurface gets progressively larger.  

For $g=4$, the conjecture remained open for several years until finally in 1988, 
counterexamples to the conjecture were discovered
independently by Pinkall and Thorbergsson \cite{PT1} 
and by Miyaoka and Ozawa \cite{MO}.  The two types of counterexamples are different, and the
method of Miyaoka and Ozawa also 
yields counterexamples to the conjecture with $g=6$ principal curvatures.  

For both the counterexmples of
Miyaoka-Ozawa and those of Pinkall-Thorbergsson,
the hypersurfaces constructed
do not have constant Lie curvatures, and so they cannot be Lie equivalent to an isoparametric
hypersurface. (See  also \cite[pp. 112--123]{Cec1} or \cite[pp. 308--322]{CR8} 
for a description of these counterexamples to the conjecture.)

This led to the following revised version of the conjecture due to Cecil, Chi and Jensen \cite{CCJ2}, \cite{CCJ4}
that includes the assumption of constant
Lie curvatures. 

\begin{conjecture}
\label{conj:CCJ-compact}
Every compact, connected proper Dupin hypersurface in $S^n$ with $g=4$ or $g=6$ principal curvatures and constant Lie
curvatures is Lie equivalent to an isoparametric hypersurface.
\end{conjecture}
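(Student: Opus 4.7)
The plan is to adapt the method of moving frames developed by Cecil, Chi, and Jensen for isoparametric classification to the Lie sphere geometric setting, combining local Cartan--K\"ahler analysis with M\"unzner--Thorbergsson topology. Since the statement is a classification up to Lie equivalence, the natural invariants to track are the Maurer--Cartan forms of $O(n+1,2)$ pulled back to $M^{n-1}$ via a Lie frame.

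First, I would set up Lie frames $\{Y_1,\ldots,Y_{n+3}\}$ along $M$ adapted to the $g$ distinct curvature spheres at each point, so that the curvature sphere maps $[K_1],\ldots,[K_g]$ are represented by fixed basis vectors. The Dupin condition, together with Corollaries \ref{cor:3.4.5} and \ref{cor:3.4.7}, forces each curvature sphere map to be constant along the leaves of its principal foliation, which translates into the vanishing of definite entries of the connection matrix restricted to those foliations. This is the same local starting point used with success by Pinkall, Miyaoka, and Cecil--Chi--Jensen in related classification problems.

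Second, I would heavily exploit the constant Lie curvature hypothesis. For $g=4$ there is essentially one independent Lie curvature (a single cross-ratio $\Psi$), and for $g=6$ there are finitely many; in both cases the M\"unzner--Thorbergsson decomposition of $S^n$ into two disk bundles over the focal submanifolds, applied to a compact proper Dupin $M$, should force $\Psi$ to take the specific harmonic values realized by the corresponding isoparametric families (for $g=4$, the value $\Psi=-1$). Once $\Psi$ is pinned down, the structure equations should collapse, and one would try to show that the residual torsion forms encoding the deviation from the isoparametric model satisfy a closed integrable system whose only solutions, given compactness of $M$ and the real-analyticity guaranteed by Theorem~\ref{algebraic}, are the Lie-frame parametrizations of either the FKM and homogeneous families (for $g=4$) or the Miyaoka homogeneous families (for $g=6$), up to a Lie sphere transformation. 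The global ball-bundle structure then promotes the local Lie equivalence with an isoparametric hypersurface to a global one.

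The principal obstacle is the second step. Constant Lie curvature is strictly weaker than constant principal curvature: the $\kappa_i$ themselves are free to vary pointwise under a common M\"obius reparametrization $\kappa \mapsto (a\kappa+b)/(c\kappa+d)$, and controlling this gauge freedom requires extracting extra integrability conditions from the Lie frame beyond what the isoparametric arguments provide. For $g=4$, the remaining difficulty concentrates on the multiplicity pairs $(m_1,m_2)$ not already handled by Cecil--Chi--Jensen, where the analysis must be fused with Chi's algebraic input from Clifford representations. For $g=6$, the already delicate Miyaoka argument in the isoparametric case must be generalized to the Dupin setting, and ruling out non-isoparametric Dupin examples that share the harmonic Lie curvature values appears to require genuinely new ideas; this is almost certainly why the statement remains a conjecture rather than a theorem.
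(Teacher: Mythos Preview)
The statement you are attempting to prove is labeled in the paper as a \emph{conjecture}, not a theorem; the paper contains no proof of it, and it is explicitly presented as open. So there is no ``paper's own proof'' to compare against. What the paper does contain is a discussion of partial results: Miyaoka's theorems under extra hypotheses on intersections of principal leaves, and Theorem~\ref{thm:CCJ} of Cecil--Chi--Jensen, which settles the $g=4$ case only under the multiplicity restriction $m_2=m_4=1$. Your proposal is not a proof either---it is a research strategy, and you effectively concede this in your final paragraph.

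That said, the strategy you outline is broadly the one that has produced the known partial results: adapted Lie frames, the moving-frame analysis of the Maurer--Cartan forms of $O(n+1,2)$, and the use of compactness via the M\"unzner--Thorbergsson ball-bundle decomposition. So as a plan it is aligned with the existing literature. Two corrections are in order. First, in the cross-ratio convention used in the paper (equation~(\ref{eq:3.5.10}) with the ordering $\kappa_1<\kappa_2<\kappa_3<\kappa_4$), the isoparametric value of the Lie curvature for $g=4$ is $\Psi=1/2$, not $\Psi=-1$; Miyaoka already showed that constancy of $\Psi$ on a compact proper Dupin hypersurface forces $\Psi=1/2$, so that step is done. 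Second, your description of the obstruction is accurate but understated: the paper makes clear that the difficulty in extending Theorem~\ref{thm:CCJ} to arbitrary $m_2=m_4$ is not a matter of ``fusing with Chi's algebraic input'' but that the moving-frame computations themselves become unmanageable without new structural insight, and for $g=6$ essentially nothing beyond Miyaoka's conditional result is known. Your proposal does not supply any such new insight, so it does not advance beyond the current state of the problem.
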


Research on this revised conjecture has been an important part of the field, since the publication of the counterexamples 
to the original conjecture.  A useful result in this area is the following 
local Lie geometric characterization of 
Legendre submanifolds that are Lie equivalent
to the Legendre lift of
an isoparametric hypersurface in $S^n$ (see \cite{Cec4} or \cite[p. 77]{Cec1}).  

Recall that a line in ${\bf P}^{n+2}$ is called 
{\em timelike} if it contains only timelike points.
This means that an orthonormal basis for the 2-plane in ${\bf R}^{n+3}_2$ determined by the timelike line in 
${\bf P}^{n+2}$ consists
of two timelike vectors.  An example is the line $[e_1, e_{n+3}]$.

\begin{theorem}
\label{thm:3.5.6} 
Let $\lambda: M^{n-1} \rightarrow \Lambda^{2n-1}$ be a Legendre submanifold with $g$ distinct curvature spheres
$[K_1],\ldots,[K_g]$ at each point.  Then $\lambda$ is Lie equivalent 
to the Legendre lift of
an isoparametric
hypersurface in $S^n$ if and only if there exist $g$ points $[P_1],\ldots,[P_g]$ on a timelike line in ${\bf P}^{n+2}$
such that
\begin{displaymath}
\langle K_i,P_i \rangle = 0, \quad 1 \leq i \leq g.
\end{displaymath}
\end{theorem}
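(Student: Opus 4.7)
If $\lambda = \beta\lambda_0$ for a Lie sphere transformation $\beta$ and the Legendre lift $\lambda_0$ of an isoparametric hypersurface with constant principal curvatures $\kappa_1,\ldots,\kappa_g$, then by Theorem~\ref{thm:3.5.1} the curvature spheres of $\lambda_0$ are $[K_i^0] = [\kappa_i Y_1 + Y_2]$. Setting $[P_i^0] = [e_1 - \kappa_i e_{n+3}]$ on the timelike line $[e_1,e_{n+3}]$ and using $\langle Y_1,e_1\rangle = \langle Y_2,e_{n+3}\rangle = -1$, $\langle Y_1,e_{n+3}\rangle = \langle Y_2,e_1\rangle = 0$, one checks $\langle K_i^0, P_i^0\rangle = -\kappa_i + \kappa_i = 0$. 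Then $[K_i] := \beta[K_i^0]$ are the curvature spheres of $\lambda$ by Theorem~\ref{thm:3.4.3}, and $[P_i] := \beta[P_i^0]$ lie on $\beta([e_1,e_{n+3}])$, which is again timelike since an element of $O(n+1,2)$ preserves the signature of any $2$-plane. Orthogonality is preserved, so the desired $[P_i]$ exist.

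\textbf{Converse, step 1: normalize the line.} Assume $[P_1],\ldots,[P_g]$ lie on a timelike line $\ell$ with $\langle K_i,P_i\rangle = 0$. By the Witt extension theorem in ${\bf R}^{n+3}_2$, $O(n+1,2)$ acts transitively on timelike $2$-planes, so there is a Lie sphere transformation $\beta_0$ with $\beta_0(\ell) = [e_1,e_{n+3}]$. Replacing $\lambda$ by $\beta_0\lambda$, we may assume $\ell = [e_1,e_{n+3}]$ and write $[P_i] = [a_i e_1 + b_i e_{n+3}]$ with $(a_i,b_i) \neq (0,0)$.

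\textbf{Converse, step 2: main calculation.} Fix $x_0 \in M^{n-1}$. By Theorem~\ref{thm:3.4.4}, for all but finitely many $t \in [0,\pi)$ the parallel submanifold $P_t\lambda$ has immersive spherical projection near $x_0$. The parallel transformation $P_t$ of~(\ref{eq:3.4.9}) rotates within $\mathrm{span}\{e_1,e_{n+3}\}$ and fixes its orthogonal complement, hence preserves $\ell$ setwise; so the hypothesis persists (with rotated $(a_i,b_i)$). Replacing $\lambda$ by this $P_t\lambda$, Theorem~\ref{thm:3.5.1} gives $[K_i] = [\kappa_i Y_1 + Y_2]$ with $\kappa_i \in {\bf R}$ the principal curvatures of the spherical projection $f$. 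Plugging in, the condition $\langle K_i,P_i\rangle = 0$ becomes
\begin{displaymath}
\langle \kappa_i Y_1 + Y_2,\; a_i e_1 + b_i e_{n+3}\rangle = -a_i\kappa_i - b_i = 0,
\end{displaymath}
which forces $a_i \neq 0$ (else $b_i = 0$ too) and $\kappa_i \equiv -b_i/a_i$, a constant independent of $x$. Thus $f$ has $g$ constant principal curvatures near $x_0$; by connectedness this extends to all of $M^{n-1}$, so $f$ is isoparametric, and undoing the cumulative reductions $\beta_0, P_t$ exhibits the original $\lambda$ as Lie equivalent to the Legendre lift of this isoparametric hypersurface.

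\textbf{Main obstacle.} The critical step is the parallel reduction: Theorem~\ref{thm:3.5.1}'s formula $[K_i] = [\kappa_i Y_1 + Y_2]$ presumes that the spherical projection is an immersion and each $\kappa_i$ is finite, yet in the original $\lambda$ some curvature sphere might coincide with the point sphere $[Y_1]$ (i.e.\ ``$\kappa_i = \infty$''). Theorem~\ref{thm:3.4.4} supplies a parallel submanifold on which this pathology disappears, but it is essential that step~1 has placed $\ell$ in the unique $2$-plane on which each $P_t$ acts as a rotation, so that the hypothesis $\langle K_i,P_i\rangle = 0$ survives the parallel transformation. Without this coordinated normalization, a generic parallel $P_t$ would move $\ell$ off itself and destroy the condition being exploited.
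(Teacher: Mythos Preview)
Your proof is correct and takes essentially the same route as the paper: normalize the timelike line to $[e_1,e_{n+3}]$ via a Lie sphere transformation, then combine a parallel transformation with the orthogonality condition to force constant principal curvatures. The only difference is ordering---the paper first reads off constant radii directly from $\langle K_i, Q_i\rangle = 0$ with $[Q_i]$ on $[e_1,e_{n+3}]$ (a global statement, via formula~(\ref{eq:1.4.4})) and only then applies a single parallel transformation to make all radii nonzero, which sidesteps the local-to-global issue you flag in your ``main obstacle'' paragraph.
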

\begin{proof}
If $\lambda$ is the Legendre lift of an 
isoparametric hypersurface in $S^n$, then all the spheres in each family
$[K_i]$ have the same radius 
$\rho_i$, where $0 < \rho_i < \pi$.  By formula (\ref{eq:1.4.4}),
this is equivalent to the condition $\langle K_i,P_i \rangle = 0$, where the
\begin{equation}
\label{eq:3.5.7}
P_i = \sin \rho_i \ e_1 - \cos \rho_i \ e_{n+3}, \quad 1 \leq i \leq g,
\end{equation}
are $g$ points on the timelike line $[e_1, e_{n+3}]$.  Since a Lie sphere transformation preserves curvature spheres,
timelike lines and the polarity relationship, the same is true for any image of $\lambda$
under a Lie sphere transformation.

Conversely, suppose that there exist $g$ points $[P_1],\ldots,[P_g]$ on a timelike line $\ell$ such that
\begin{displaymath}
\langle K_i,P_i \rangle = 0, \quad 1 \leq i \leq g.
\end{displaymath}
Let $\beta$ be a Lie sphere transformation that maps $\ell$
to the line $[e_1, e_{n+3}]$.  Then the curvature spheres $\beta [K_i]$ of $\beta \lambda$ are respectively
orthogonal to the points $[Q_i] = \beta [P_i]$ on the line $[e_1, e_{n+3}]$.  This means that 
for each value of $i$, the spheres corresponding
to $\beta [K_i]$ have constant radius on $M^{n-1}$.  By applying a 
parallel transformation, if necessary, we can arrange 
that none of these curvature spheres has radius zero. 
Then $\beta \lambda$ is the Legendre lift of
an isoparametric hypersurface in $S^n$. 
\end{proof}

\begin{remark}
\label{rem:3.5.7}
{\em In the case where $\lambda$ is Lie equivalent
to the Legendre lift of 
an isoparametric hypersurface in $S^n$, one can say more about the position of the points $[P_1],\ldots,[P_g]$ on the timelike line $\ell$. 
M\"{u}nzner \cite{Mu}--\cite{Mu2} showed that the
radii $\rho_i$ of the curvature spheres of an isoparametric hypersurface must be of the form
\begin{equation}
\label{eq:3.5.8}
\rho_i = \rho_1 + (i-1) \frac {\pi}{g}, \quad 1 \leq i \leq g,
\end{equation}
for some $\rho_1 \in (0,\pi/g)$.  Hence, after Lie sphere transformation, the $[P_i]$ must have the form
(\ref{eq:3.5.7}) for $\rho_i$ as in equation (\ref{eq:3.5.8})}.
\end{remark}

Since the principal curvatures are constant on an isoparametric hypersurface, the Lie curvatures are also constant.
By M\"{u}nzner's work, the distinct principal curvatures $\kappa_i, 1 \leq i \leq g$, of an isoparametric hypersurface
must have the form
\begin{equation}
\label{eq:3.5.9}
\kappa_i = \cot \rho_i,
\end{equation}
for $\rho_i$ as in equation (\ref{eq:3.5.8}).  Thus the Lie curvatures of an isoparametric hypersurface can be determined.
We can order the principal curvatures so that 
\begin{equation}
\label{eq:3.5.9a}
\kappa_1 < \cdots < \kappa_g.
\end{equation}
In the case $g=4$, this leads to a unique Lie curvature $\Psi$ defined by
\begin{equation}
\label{eq:3.5.10}
\Psi = [\kappa_1, \kappa_2; \kappa_3, \kappa_4] = 
(\kappa_1 - \kappa_2)(\kappa_4 - \kappa_3)/(\kappa_1 - \kappa_3)(\kappa_4 - \kappa_2).
\end{equation}
The ordering of the principal curvatures implies that $\Psi$ must satisfy $0 < \Psi < 1$.  Using equations
(\ref{eq:3.5.9}) and (\ref{eq:3.5.10}), one can compute that 
$\Psi = 1/2$ on any isoparametric hypersurface,
i.e., the four curvature spheres form a 
{\em harmonic set} in the sense of projective geometry (see, for example,
\cite[p. 59]{Sam}).  

There is, however, a simpler way to compute $\Psi$.  One applies Theorem~\ref{thm:3.5.2}
to the Legendre lift of
one of the focal submanifolds of the isoparametric hypersurface.  By the work of M\"{u}nzner,
each isoparametric hypersurface $M^{n-1}$ embedded in $S^n$ has two distinct 
focal submanifolds, each of
codimension greater than one.  The hypersurface $M^{n-1}$ is a 
tube of constant radius over each of these focal
submanifolds.  Therefore, the Legendre lift of $M^{n-1}$ is obtained from the Legendre lift of
either focal
submanifold by parallel transformation.  Thus, the Legendre lift of
$M^{n-1}$ has the same Lie curvature as the 
Legendre lift of either focal submanifold.  Let $\phi:V \rightarrow S^n$ be one of the focal submanifolds.
By the same calculation that yields equation (\ref{eq:3.5.8}), M\"{u}nzner showed that if $\xi$ is any unit normal
to $\phi(V)$ at any point, then the shape operator $A_\xi$ has three distinct principal curvatures,
\begin{displaymath}
\kappa_1 = -1, \quad  \kappa_2 = 0, \quad \kappa_3 = 1.  
\end{displaymath}
By Theorem~\ref{thm:3.5.2}, the Legendre lift of $\phi$ has
a fourth principal curvature $\kappa_4 = \infty$.  Thus, the Lie curvature of this Legendre submanifold is
\begin{displaymath}
\Psi = (-1 - 0)(\infty - 1)/(-1 -1)(\infty - 0) = 1/2.
\end{displaymath}

In the case $g=4$, one can ask what is the strength of the assumption $\Psi = 1/2$ on $M^{n-1}$.  Since $\Psi$ is
only one function of the principal curvatures, one would not expect this assumption to classify Legendre
submanifolds up to Lie equivalence.  However, if one makes additional assumptions, e.g., the Dupin condition,
then results can be obtained.  

Miyaoka \cite{Mi3} proved that the assumption that $\Psi$ is 
constant on a 
compact connected proper Dupin hypersurface 
$M^{n-1}$ in $S^n$ with four principal curvatures, 
together with an additional assumption regarding intersections
of leaves of the various principal foliations, implies that $M^{n-1}$ is Lie equivalent to an isoparametric
hypersurface.  This proved that Conjecture \ref{conj:CCJ-compact} is true under Miyaoka's additional assumptions.
However, no one has proved that Miyaoka's additional assumptions always hold.
In the same paper, Miyaoka also proved that if the Lie curvature of a compact proper Dupin hypersurface with $g=4$ 
is constant, then it has the value $1/2$. 

As mentioned above, Thorbergsson 
\cite{Th1} showed that for a compact proper Dupin hypersurface in $S^n$ with four principal curvatures, the multiplicities 
of the principal curvatures must satisfy $m_1 = m_3, m_2 = m_4$, when the principal
curvatures are ordered as in equation (\ref{eq:3.5.9a}),
the same restriction as for an isoparametric hypersurface (see, for example, \cite[p.108]{CR8}).

Cecil, Chi and Jensen \cite{CCJ2} then used a different approach than Miyaoka \cite{Mi3} to try to prove 
Conjecture \ref{conj:CCJ-compact}.  First, they proved 
that the Legendre lift of a connected, compact proper Dupin hypersurface with $g>2$ principal curvatures is irreducible
(in the sense of Pinkall \cite{P4}, see Section \ref{sec:4.1}).  Then
they worked locally using the method of moving frames and assuming irreducibility to prove the following theorem.

\begin{theorem}
\label{thm:CCJ}
Let $M$ be an irreducible connected proper Dupin 
hypersurface in $S^n$ with four principal curvatures having multiplicities $m_1 = m_3 \geq 1$, $m_2 = m_4 =1$,
and constant Lie curvature $\Psi = 1/2$. Then $M$ is Lie equivalent to an open
subset of an isoparametric hypersurface.
\end{theorem}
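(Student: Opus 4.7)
The plan is to apply Theorem~\ref{thm:3.5.6}: it suffices to produce four points $[P_1],\dots,[P_4]$ on a common timelike line in ${\bf P}^{n+2}$ such that $\langle K_i,P_i\rangle=0$ for $i=1,\dots,4$, where $[K_1],\dots,[K_4]$ are the four curvature sphere maps of the Legendre lift $\lambda$ of $M$. To carry this out I would use the method of moving frames on the Lie sphere group $O(n+1,2)$. Choose a Lie frame $\{Y_1,\dots,Y_{n+3}\}$ along $M$ adapted to $\lambda$: let two null frame vectors span the line $\lambda(x)$ and be chosen so that, together with two more null vectors on the same projective line, they represent the four curvature spheres $[K_1],\dots,[K_4]$; pick the remaining $n-1$ spacelike vectors to decompose orthonormally along the principal subspaces $T_1,T_2,T_3,T_4\subset T_xM$, with $\dim T_2=\dim T_4=1$ and $\dim T_1=\dim T_3=m$. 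The hypothesis $\Psi=1/2$ fixes the projective cross-ratio of the four $[K_i]$ along $\lambda(x)$, which in turn normalizes the Lie inner products of the chosen frame representatives and removes one scaling degree of freedom in the fibre of the frame bundle.

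Next I would write the Maurer-Cartan equations of $O(n+1,2)$ pulled back to $M$, encode Pinkall's Conditions~(1)--(3) as vanishing of certain connection forms, and then impose the Dupin condition via Corollary~\ref{cor:3.4.7}: each $[K_i]$ is constant along its curvature surface, so the differentials $dK_i$ restricted to $T_i$ take values in the line through $K_i$. This produces a large family of zero relations among the connection forms. The constant Lie curvature assumption adds that the analogue of equation~(\ref{eq:3.5.10}) is identically $1/2$, which differentiates to further linear identities among the forms. I would then differentiate these identities once more (taking exterior derivatives and using $d^2=0$), working through the Cartan-Kähler/Frobenius-style closure conditions to derive the key relation that there exist functions $\rho_1,\dots,\rho_4$ on $M$ such that each $K_i$ is orthogonal to a vector $P_i=\sin\rho_i\,V-\cos\rho_i\,W$ lying in a fixed spacelike--timelike $2$-plane spanned by two global sections $V,W$ of the frame bundle, and then to show $dV=dW=0$, so that the $[P_i]$ are constant and lie on one timelike line.

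The role of irreducibility is to exclude the degenerate branches of this differential system. By Theorem~\ref{thm:4.2.9}, irreducibility means that no single curvature sphere map $[K_i]$ takes values in a linear subspace of ${\bf P}^{n+2}$ of codimension two, which forces certain determinants arising in the successive differentiation of the structure equations to be nonzero. This is what rules out the Pinkall--Thorbergsson type counterexamples (which are reducible in the appropriate sense) and pushes the Pfaffian system into the generic closed branch. Once the closure is established, $V$ and $W$ are shown to be constant vectors in ${\bf R}^{n+3}_2$, the line $[V,W]$ is timelike, and the points $[P_1],\dots,[P_4]$ satisfy the hypothesis of Theorem~\ref{thm:3.5.6}, giving the required Lie equivalence to the Legendre lift of an isoparametric hypersurface.

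The main obstacle is the moving-frame computation itself: the system of Maurer-Cartan forms has many components (roughly $(n+3)(n+2)/2$ independent $1$-forms), and the argument requires repeated exterior differentiation of a fairly large collection of vanishing identities coming from the Dupin, multiplicity, and $\Psi=1/2$ hypotheses, together with careful use of irreducibility at each step to conclude that certain bracketed expressions do not vanish. The multiplicities $m_2=m_4=1$ simplify the treatment of the foliations $T_2,T_4$, but the higher-dimensional foliations $T_1,T_3$ still carry internal rotational freedom $O(m)\times O(m)$ within the frame bundle that must be pinned down either by further normalization or by symmetrizing across that gauge before the closure argument can be completed.
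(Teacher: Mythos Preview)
The paper does not give a proof of Theorem~\ref{thm:CCJ}; this is a survey, and the theorem is quoted from \cite{CCJ2}. All the paper says is that the proof ``involves some complicated calculations'' using the method of moving frames, and that Theorem~\ref{thm:4.2.9} (reducibility criterion) and Theorem~\ref{thm:3.5.6} (Lie equivalence to isoparametric) are both important in it. Your outline is consistent with that description, and the overall architecture you sketch---adapt a Lie frame so that curvature spheres are frame vectors, write the Maurer--Cartan equations, use the Dupin condition and $\Psi=1/2$ to kill connection forms, differentiate repeatedly, and invoke irreducibility to rule out degenerate branches until the hypotheses of Theorem~\ref{thm:3.5.6} are forced---is indeed the strategy of \cite{CCJ2}.

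Two remarks. First, a factual slip: the Pinkall--Thorbergsson counterexamples are \emph{not} reducible; they are compact, and compact proper Dupin hypersurfaces with $g>2$ are irreducible (this is stated in the paper just before Theorem~\ref{thm:CCJ}). What excludes them from Theorem~\ref{thm:CCJ} is that their Lie curvature is not constant. The role of the irreducibility hypothesis is rather to exclude examples like Example~\ref{ex:constant -Lie-curv}, which is reducible, has $\Psi\equiv 1/2$, and is \emph{not} Lie equivalent to an isoparametric hypersurface. Second, what you have written is a plan, not a proof: the entire substance of \cite{CCJ2} lies in the ``repeated exterior differentiation'' step you allude to, and you have not indicated any mechanism by which that closure actually occurs or how exactly irreducibility enters to force nonvanishing of the relevant coefficients. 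Since the paper itself supplies no proof to compare against beyond the same high-level description, there is nothing further to adjudicate here.
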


Theorem \ref{thm:CCJ} implies the following result in the compact case due to 
Cecil, Chi and Jensen \cite{CCJ2}:

\begin{theorem}
\label{thm:CCJ-compact}
Let $M$ be a compact, connected proper Dupin hypersurface in $S^n$ with four principal curvatures
having multiplicities $m_1 = m_3 \geq 1$, $m_2 = m_4 =1$, and constant Lie curvature.  Then $M$ is Lie equivalent
to an isoparametric hypersurface.\index{Cecil-Chi-Jensen} 
\end{theorem}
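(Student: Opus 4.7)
The plan is to reduce the global compact statement to the local Theorem~\ref{thm:CCJ}, and then upgrade from local to global using the compactness and connectedness of $M$. The two hypotheses of Theorem~\ref{thm:CCJ} that require verification are irreducibility and the specific value $\Psi = 1/2$ of the Lie curvature.

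Verifying these hypotheses is the easy part. For the value of $\Psi$: Miyaoka's theorem cited in the discussion preceding Theorem~\ref{thm:CCJ} asserts that if the Lie curvature of a compact proper Dupin hypersurface in $S^n$ with $g=4$ is constant, then it must equal $1/2$. Our constancy hypothesis therefore gives $\Psi = 1/2$ at once. For irreducibility: the result of Cecil, Chi and Jensen (also cited above) asserts that the Legendre lift of a compact connected proper Dupin hypersurface with $g > 2$ is irreducible in the sense of Pinkall~\cite{P4}. Since $g=4$ here, the Legendre lift of $M$ is irreducible. Theorem~\ref{thm:CCJ} now yields, around every point $x \in M$, a neighborhood $U_x$ and a Lie sphere transformation $\beta_x$ such that $\beta_x(\lambda(U_x))$ is an open subset of the Legendre lift of an isoparametric hypersurface in $S^n$.

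To globalize, I would use the intrinsic characterization provided by Theorem~\ref{thm:3.5.6}: a Legendre submanifold is Lie equivalent to the Legendre lift of an isoparametric hypersurface in $S^n$ if and only if there exist $g$ points $[P_1],\ldots,[P_g]$ on a single timelike line $\ell \subset {\bf P}^{n+2}$ with $\langle K_i, P_i \rangle = 0$ for the curvature sphere maps $[K_i]$ of $\lambda$ on all of $M$. The local conclusions of Step~2 exhibit such data $(\ell_x, [P_1^x], \ldots, [P_4^x])$ over each $U_x$. Because the curvature sphere maps $[K_1],\ldots,[K_4]$ are globally defined smooth functions on $M$ (the proper Dupin condition with constant $g=4$ guarantees this), and because Remark~\ref{rem:3.5.7} shows that the configuration of the $[P_i]$ on the timelike line is rigidly determined by M\"{u}nzner's formula~(\ref{eq:3.5.8}) up to an overall Lie sphere transformation, the local data on overlapping neighborhoods must agree. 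Connectedness of $M$ then produces a single globally defined timelike line $\ell$ and points $[P_1],\ldots,[P_4]$ on it satisfying $\langle K_i, P_i \rangle = 0$ globally, so Theorem~\ref{thm:3.5.6} delivers a Lie sphere transformation $\beta$ with $\beta(\lambda(M))$ contained in the Legendre lift of an isoparametric hypersurface $N \subset S^n$. Since $\beta(\lambda(M))$ is compact, open, and closed inside the connected Legendre lift of $N$, it equals it, so $M$ is Lie equivalent to $N$.

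I expect the principal obstacle to be the globalization step, since Theorem~\ref{thm:CCJ} is purely local and the conclusion of Theorem~\ref{thm:CCJ-compact} is global. The heart of the difficulty is showing that the local timelike lines $\ell_x$ and labeled points $[P_i^x]$ assemble into a consistent global choice. The key lever is the rigidity recorded in Remark~\ref{rem:3.5.7} (forcing $\rho_i = \rho_1 + (i-1)\pi/g$), which makes the data essentially unique given the curvature sphere maps; combined with connectedness of $M$, this collapses the pointwise choices into a single globally defined $\ell$ and $[P_1],\ldots,[P_4]$.
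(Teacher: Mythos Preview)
Your verification of the two hypotheses (irreducibility via Cecil--Chi--Jensen, and $\Psi = 1/2$ via Miyaoka) is exactly what the paper does. However, you have misread Theorem~\ref{thm:CCJ}: it is already a \emph{global} statement about a connected irreducible proper Dupin hypersurface $M$, not a local one. Its conclusion is that $M$ itself---the entire connected hypersurface---is Lie equivalent via a single Lie sphere transformation to an open subset of an isoparametric hypersurface. There are no local neighborhoods $U_x$ and no family of transformations $\beta_x$ to patch together. Once the hypotheses of Theorem~\ref{thm:CCJ} are verified, you get one global $\beta$ immediately, and the only remaining observation is that a compact open subset of a connected isoparametric hypersurface is the whole thing.

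Consequently your entire globalization step, invoking Theorem~\ref{thm:3.5.6} and Remark~\ref{rem:3.5.7} to glue local timelike lines $\ell_x$ and points $[P_i^x]$, is unnecessary. It is also not fully justified as written: the rigidity in Remark~\ref{rem:3.5.7} constrains the spacing of the $[P_i]$ along a \emph{given} timelike line, but does not by itself force two timelike lines $\ell_x$ and $\ell_y$ arising on overlapping neighborhoods to coincide. Knowing that each $K_i$ is orthogonal to both $[P_i^x]$ and $[P_i^y]$ on the overlap does not immediately pin down the line, so your uniqueness claim would require more argument. Fortunately, none of this is needed once you read Theorem~\ref{thm:CCJ} correctly.
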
  

Theorem \ref{thm:CCJ} implies Theorem \ref{thm:CCJ-compact} for the following reasons.  As noted above,
Cecil, Chi and Jensen  \cite{CCJ2} proved that the Legendre lift of a connected, compact proper Dupin hypersurface with $g>2$ principal curvatures is irreducible.  Next Thorbergsson's \cite{Th1} 
result proves that the multiplicities of a compact, connected proper Dupin hypersurface with four principal curvatures satisfy the conditions $m_1 = m_3$ and  $m_2 = m_4$, when the principal curvatures are appropriately ordered.
Finally, Miyaoka \cite{Mi3} proved that if the Lie curvature of a compact proper Dupin hypersurface with $g=4$ 
is constant, then it has the value $1/2$. Thus, the hypotheses of Theorem \ref{thm:CCJ-compact} imply
that the hypotheses of Theorem \ref{thm:CCJ} are satisfied.

This means that the full Conjecture \ref{conj:CCJ-compact} for $g=4$ would be proven, if 
the assumption that the value of $m_2 = m_4$ is equal to one could be eliminated from 
Theorem \ref{thm:CCJ}, and thus from Theorem \ref{thm:CCJ-compact}.  

The proof of Theorem \ref{thm:CCJ} involves some complicated calculations,
which would become even more elaborate if the value of  $m_2 = m_4$ were allowed to be greater than one.
Even so, this approach to proving Conjecture \ref{conj:CCJ-compact} 
could possibly be successful with some additional insight regarding the structure of the calculations involved.
The criteria for reducibility (Theorem \ref{thm:4.2.9}) and the criteria for Lie equivalence 
to an isoparametric hypersurface (Theorem \ref{thm:3.5.6}) 
are both important in the proof of Theorem \ref{thm:CCJ}.

In the case $g=6$, we do not know of any results beyond those of Miyaoka \cite{Mi4}, who
showed that if the Dupin hypersurface
satisfies some additional assumptions
on the intersections of the leaves of the various principal foliations, then
Conjecture \ref{conj:CCJ-compact} is true for $g=6$.
However, no one has proved that Miyaoka's additional assumptions always hold.

A local approach assuming irreducibility,
similar to that used by Cecil, Chi and Jensen for the $g=4$ case, might possibly work
for the case $g=6$, but the calculations involved
would be very complicated, unless some new algebraic insight is found to simplify the situation.

\subsection{Examples with constant Lie curvature that are not Lie equivalent to an isoparametric hypersurface}
\label{sec:3.8}

The following example  \cite{Cec4} (see also \cite[pp. 80--82]{Cec1}) 
is a noncompact proper Dupin hypersurface with $g=4$ distinct principal 
curvatures of multiplicity one, and constant Lie curvature
$\Psi = 1/2$, which is not Lie equivalent to an open subset of an isoparametric
hypersurface with four principal curvatures in $S^n$.  

This example 
is reducible in the sense of Pinkall (see Section \ref{sec:4.1}), 
and it cannot be made compact without destroying the property
that the number $g$ of distinct curvatures spheres equals four at each point.  
This example
shows that some additional hypotheses (either compactness or
irreducibility)
besides constant Lie curvature $\Psi = 1/2$ are needed
to conclude that a proper Dupin hypersurface 
with $g=4$ principal curvatures is Lie equivalent to
an isoparametric hypersurface.  

\begin{example}
\label{ex:constant -Lie-curv}
{\em Let $\phi:V \rightarrow S^{n-m}$ be
an embedded Dupin hypersurface in $S^{n-m}$ with field of unit normals $\xi$ in $S^{n-m}$, such that $\phi$ has three distinct
principal curvatures, 
\begin{displaymath}
\mu_1 <\mu_2 < \mu_3,
\end{displaymath}
at each point of $V$.  Embed $S^{n-m}$ as a totally geodesic submanifold
of $S^n$, and let $B^{n-1}$ be the unit normal
bundle of the submanifold $\phi(V)$ in $S^n$.  Let
$\lambda: B^{n-1} \rightarrow \Lambda^{2n-1}$ be the Legendre lift of
the submanifold $\phi(V)$ in $S^n$.  Any unit normal
$\eta$ to $\phi(V)$ at a point $x \in V$ can be written in the form
\begin{displaymath}
\eta = \cos \theta \ \xi (x) + \sin \theta \ \zeta,
\end{displaymath}
where $\zeta$ is a unit normal to $S^{n-m}$ in $S^n$.  Since the shape operator $A_\zeta = 0$, we have
\begin{displaymath}
A_\eta = \cos \theta \ A_\xi.
\end{displaymath}
Thus the principal curvatures of $A_\eta$ are
\begin{equation}
\label{eq:3.5.11}
\kappa_i = \cos \theta \ \mu_i, \quad 1 \leq i \leq 3.
\end{equation}
If $\eta \cdot \xi = \cos \theta \neq 0$, then $A_\eta$ has three distinct principal curvatures.  However, if
$\eta \cdot \xi = 0$, then  $A_\eta = 0$.  

Let $U$ be the open subset of $B^{n-1}$ on which $\cos \theta > 0$,
and let $\alpha$ denote the restriction of $\lambda$ to $U$.  By Theorem~\ref{thm:3.5.2}, $\alpha$ has four
distinct curvature spheres at each point of $U$.  Since $\phi(V)$ is Dupin in $S^{n-m}$, it is easy to show that
$\alpha$ is Dupin (see the tube construction \cite[pp. 127-133]{Cec1} for the details).  
Furthermore, since $\kappa_4 = \infty$,
the Lie curvature $\Psi$ of $\alpha$ at a point $(x, \eta)$ of $U$ equals the M\"{o}bius curvature 
$\Phi(\kappa_1,\kappa_2,\kappa_3)$.  Using equation (\ref{eq:3.5.11}), we compute
\begin{equation}
\label{eq:3.5.12}
\Psi = \Phi(\kappa_1,\kappa_2,\kappa_3) = \frac{\kappa_1 - \kappa_2}{\kappa_1 - \kappa_3} =
\frac{\mu_1 - \mu_2}{\mu_1 - \mu_3} = \Phi(\mu_1,\mu_2,\mu_3).
\end{equation}

Now consider the special case where
 $\phi(V)$ is a minimal isoparametric hypersurface in $S^{n-m}$ with three distinct principal
curvatures.  By M\"{u}nzner's formula (\ref{eq:3.5.8}), these principal curvatures must have the values,
\begin{displaymath}
\mu_1 = - \sqrt{3}, \quad \mu_2 = 0, \quad \mu_3 = \sqrt{3}.
\end{displaymath}
On the open subset $U$ of $B^{n-1}$ described above, the Lie 
curvature of $\alpha$ has the constant value $1/2$ by equation (\ref{eq:3.5.12}).  To construct an immersed proper Dupin
hypersurface 
with four principal curvatures and constant Lie curvature $\Psi = 1/2$ in $S^n$, we simply take the open
subset $P_{-t} \alpha (U)$ of the tube of radius $t$ around $\phi(V)$ in $S^n$ for an appropriate value of $t$.

We can see that this example is not Lie equivalent to an open subset of an isoparametric hypersurface
in $S^n$ with four distinct principal curvatures as follows.  Note that
the example that we have constructed is reducible, because 
it is obtained by the tube construction from the hypersurface $\phi(V)$ in $S^{n-m} \subset S^n$.  
On the other hand, as we will show in the next paragraph, any open subset of an isoparametric hypersurface with four principal curvatures in $S^n$ 
is irreducible, so it cannot be Lie equivalent to our example.

The irreducibility of an open subset of an isoparametric hypersurface with four principal curvatures in $S^n$ 
follows from the results of Cecil, Chi and Jensen \cite{CCJ2}, who proved that
a compact proper Dupin hypersurface with $g>2$ principal curvatures is irreducible.  
Furthermore, they proved that 
an irreducible proper Dupin hypersurface cannot contain a reducible open subset.
This follows from the algebraicity of proper Dupin hypersurfaces, as stated in
Theorem \ref{algebraic} (see also \cite[pp. 145-147]{Cec1}).  Thus, a compact isoparametric 
hypersurface with four principal curvatures in $S^n$ is irreducible, and it cannot contain a reducible open subset.

Note that the 
point sphere map $[Y_1]$ of  the Legendre submanifold $\lambda: B^{n-1} \rightarrow \Lambda^{2n-1}$ defined above
is a curvature sphere of multiplicity $m$ which lies in the linear subspace of codimension $m+1$ in
${\bf P}^{n+2}$ orthogonal to the space spanned by $e_{n+3}$ and by those vectors $\zeta$ normal to $S^{n-m}$ in $S^n$.
This geometric fact implies that for such a vector $\zeta$, there are only two distinct curvature spheres on each
of the lines $\lambda (x, \zeta)$, since $A_\zeta = 0$ (see Theorem~\ref{thm:3.5.2}).  
This change in the number
of distinct curvature spheres at points of the form $(x, \zeta)$ is precisely why $\alpha$ cannot be extended to a
compact proper Dupin submanifold with $g=4$.}
\end{example}

With regard to Theorem~\ref{thm:3.5.6}, $\alpha$ comes as close as possible to satisfying the requirements
for being Lie equivalent
to an isoparametric hypersurface without actually fulfilling them.  The principal
curvatures $\kappa_2 = 0$ and $\kappa_4 = \infty$ are constant on $U$.  If a third principal curvature
were also constant, then the constancy of $\Psi$ would imply that all four principal curvatures were constant,
and $\alpha$ would be the Legendre lift of an isoparametric hypersurface.

Using this same method, it is easy to construct reducible noncompact proper 
Dupin hypersurfaces in $S^n$ with $g=4$
and $\Psi = c$, for any constant $0 < c < 1$.  If $\phi(V)$ is an isoparametric hypersurface in $S^{n-m}$ with
three distinct principal curvatures, then M\"{u}nzner's formula (\ref{eq:3.5.8}) implies that these principal
curvatures must have the values,
\begin{equation}
\label{eq:3.5.13}
\mu_1 = \cot (\theta + \frac{2 \pi}{3}), \quad \mu_2 = \cot (\theta + \frac{\pi}{3}), \quad
\mu_3 = \cot \theta, \quad 0 < \theta < \frac{\pi}{3}. 
\end{equation}
Furthermore, any value of $\theta$ in $(0, \pi/3)$ can be realized by some hypersurface in a parallel family
of isoparametric hypersurfaces in $S^{n-m}$.  
A direct calculation using equations (\ref{eq:3.5.12}) and (\ref{eq:3.5.13}) shows
that the Lie curvature $\Psi$ of $\alpha$ satisfies
\begin{displaymath}
\Psi = \Phi(\kappa_1,\kappa_2,\kappa_3) = \frac{\kappa_1 - \kappa_2}{\kappa_1 - \kappa_3} =
\frac{\mu_1 - \mu_2}{\mu_1 - \mu_3}  
= \frac{1}{2} + \frac{\sqrt{3}}{2} \tan (\theta - \frac{\pi}{6})
\end{displaymath}
on $U$.  This can assume any value $c$ in the interval $(0,1)$ by an appropriate choice of $\theta$ in
$(0, \pi/3)$.  An open subset of a tube over $\phi(V)$ in $S^n$ is a proper Dupin hypersurface with $g=4$
and $\Psi = \Phi = c$.  Note that $\Phi$ has different values on different hypersurfaces in the parallel
family of isoparametric hypersurfaces in $S^{n-m}$.  Thus these hypersurfaces are not 
M\"{o}bius equivalent to each other by Corollary
\ref{cor:3.5.5}.  This is consistent with the fact that a 
parallel transformation is not a M\"{o}bius transformation.

\subsection{Local classifications of Dupin hypersurfaces}
\label{sec:3.9}

In this section, we mention local classifications of 
proper Dupin hypersurfaces that have been obtained using the methods of Lie sphere geometry.
In the case of $g=2$ principal curvatures, the classification of Dupin surfaces in $S^3$ (the cyclides of Dupin)
goes back to the nineteenth century. (See, \cite[pp. 151--166]{CR7} 
or \cite[p. 148]{Cec1} for more detail.) 

A proper Dupin hypersurface $M \subset S^n$ with two distinct curvature spheres of respective multiplicities $p$
and $q$ is called a {\em cyclide of Dupin of characteristic} $(p,q)$ (see Pinkall \cite{P4}). In the same paper,
Pinkall \cite{P4} proved the following classification of the higher dimensional cyclides
(see also \cite[pp. 149--151]{Cec1} for a proof).

\begin{theorem}
\label{thm:dupin-cyclides}
\begin{enumerate}
\item[${\rm(a)}$] Every connected cyclide of Dupin of characteristic $(p,q)$
is contained in a unique compact, connected cyclide of Dupin of characteristic $(p,q)$.
\item[${\rm(b)}$] Any two cyclides of Dupin of the same characteristic $(p,q)$ are locally Lie equivalent, each being
Lie equivalent to an open subset of a standard product of two spheres
\begin{equation}
\label{eq:std-prod-p-q-2}
S^q (1/\sqrt{2}) \times S^p (1/\sqrt{2}) \subset S^n \subset {\bf R}^{q+1} \times {\bf R}^{p+1} = {\bf R}^{n+1}, 
\end{equation} 
where $p+q = n-1$.
\end{enumerate}
\end{theorem}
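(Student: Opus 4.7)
The plan is to reduce the classification to the Lie-geometric characterization of isoparametric hypersurfaces, via Theorem~\ref{thm:3.5.6}, and to recover the compact extension from the algebraicity theorem (Theorem~\ref{algebraic}). As a preliminary, one checks that the standard product $S^q(1/\sqrt 2)\times S^p(1/\sqrt 2)\subset S^n$ is isoparametric with two distinct constant principal curvatures of multiplicities $q$ and $p$, so it is itself a compact cyclide of characteristic $(p,q)$. Now let $\lambda=[K_1,K_2]:M^{n-1}\to\Lambda^{2n-1}$ be an arbitrary cyclide of characteristic $(p,q)$. By Corollary~\ref{cor:3.4.5} the principal distributions $T_1,T_2$ of dimensions $p,q$ are integrable, and the Dupin condition (via Corollary~\ref{cor:3.4.7} whenever the multiplicity exceeds one) ensures that each $[K_i]$ is constant along the leaves of $T_i$. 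Hence $[K_1]$ factors through an immersion of the $q$-dimensional leaf space of $T_1$, and $[K_2]$ through an immersion of the $p$-dimensional leaf space of $T_2$.

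The key technical step is to show that the image of $K_i$ spans a subspace $V_i\subset\mathbf{R}^{n+3}_2$ with $\dim V_1=q+2$, $\dim V_2=p+2$, signatures $(q+1,1)$ and $(p+1,1)$ respectively, and $V_2=V_1^{\perp}$. Pinkall's condition (1) gives $\langle K_1,K_2\rangle\equiv 0$ on $M^{n-1}$, and since $[K_1]$ is constant on $T_1$-leaves and $[K_2]$ on $T_2$-leaves, one may rescale so that in local product coordinates $(u,v)$ adapted to the two foliations, $K_1=K_1(v)$ and $K_2=K_2(u)$. The pointwise orthogonality then becomes $\langle K_1(v),K_2(u)\rangle=0$ for every pair $(u,v)$, and bilinearity yields $V_1\perp V_2$, hence $\dim V_1+\dim V_2\leq n+3$. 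The reverse inequality is a signature count: a non-degenerate subspace of $\mathbf{R}^{n+3}_2$ of signature $(a,b)$ meets $Q^{n+1}$ projectively in a quadric of dimension $a+b-2$, so a subspace containing a $q$-dimensional submanifold of $Q^{n+1}$ must itself have dimension at least $q+2$. Adding the analogous bound for $V_2$ gives equality, forcing the asserted dimensions, signatures, and polarity $V_2=V_1^\perp$. The degenerate case, in which $V_i$ would carry a null radical, is excluded by the proper Dupin assumption $[K_1]\neq[K_2]$ together with the immersion condition in Pinkall's definition.

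With the subspaces $V_1,V_2$ in hand, let $[P_2]$ be the unique timelike direction in $V_1$ and $[P_1]$ that in $V_2$. Since $P_1\in V_2=V_1^{\perp}$ we have $\langle P_1,P_2\rangle=0$, and both being timelike forces $\mathrm{span}(P_1,P_2)$ to be negative definite, so $[P_1,P_2]$ is a timelike line in $\mathbf{P}^{n+2}$. Because the image of $K_i$ lies in $V_i$ while $P_i\in V_i^{\perp}$, the identity $\langle K_i,P_i\rangle=0$ holds on $M^{n-1}$, so Theorem~\ref{thm:3.5.6} delivers the local Lie equivalence of $\lambda$ with the Legendre lift of an isoparametric hypersurface in $S^n$ having $g=2$ principal curvatures, which by Section~\ref{sec:3.6} must be the standard product $S^q(1/\sqrt 2)\times S^p(1/\sqrt 2)$; this proves (b). For (a), transport the whole standard-product Legendre lift back through the inverse Lie sphere transformation to produce a compact connected cyclide of characteristic $(p,q)$ containing $\lambda$; uniqueness follows from Theorem~\ref{algebraic}, because any two such compact extensions are contained in the same $(n-1)$-dimensional irreducible algebraic set and must therefore coincide. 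The principal difficulty is this span-dimension step: one must simultaneously establish the orthogonality-based upper bound and the signature-based lower bound and carefully rule out degenerate spans using the proper-Dupin assumption.
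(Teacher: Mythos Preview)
The paper does not actually prove this theorem; it merely cites Pinkall \cite{P4} and \cite[pp.~149--151]{Cec1}. Your overall strategy---showing that the two curvature-sphere images span complementary orthogonal subspaces $V_1\oplus V_2=\mathbf{R}^{n+3}_2$ of signatures $(q+1,1)$ and $(p+1,1)$, then invoking Theorem~\ref{thm:3.5.6}---is precisely the one in those references, but two steps are not carried out.

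First, the lower bound $\dim V_1\ge q+2$ is circular as written: you invoke the fact that a \emph{non-degenerate} subspace meets $Q^{n+1}$ in a quadric of codimension one, but non-degeneracy of $V_1$ is exactly what is in question, and a degenerate $(q+1)$-dimensional subspace with a one-dimensional null radical can perfectly well contain a $q$-dimensional submanifold of $Q^{n+1}$ (indeed, the tangent space to the light cone along a null ray is of this type). The cited proof avoids this by working instead with the osculating space at a single point---the span of $K_1(x)$, $K_2(x)$, and all the $dK_i(X)$---and proving it is \emph{constant} in $x$ by differentiating the full system of relations coming from Pinkall's conditions (1)--(3); the global spans then coincide with the pointwise ones, and the dimensions and signatures fall out directly. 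Your product-chart argument gives local mutual orthogonality but neither the dimension count nor the global constancy. (A smaller slip: a space of signature $(q+1,1)$ has an open cone of timelike directions, not a ``unique'' one; the argument survives if you simply pick any timelike $P_2\in V_1$ and $P_1\in V_2$.) Second, for part~(a) the appeal to Theorem~\ref{algebraic} is both roundabout and incomplete: algebraicity of each extension does not force two compact extensions to lie in the \emph{same} irreducible component. In the standard argument, once $V_1$ and $V_2$ are in hand the compact cyclide is constructed intrinsically as the Legendre submanifold $\{[k_1,k_2]:k_i\in V_i,\ \langle k_i,k_i\rangle=0\}\cong S^q\times S^p$, and uniqueness is immediate because $V_1,V_2$ are determined by the curvature-sphere maps on any connected open piece.
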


From this, one can derive a M\"{o}bius geometric classification of the cyclides of Dupin as follows
(see \cite{Cec3}, \cite[pp. 151--159]{Cec1}).

\begin{theorem}
\label{thm:4.3.2}
${\rm(a)}$ Every connected cyclide of Dupin $M^{n-1} \subset {\bf R}^n$ of characteristic $(p,q)$ is M\"{o}bius
equivalent to an open subset of a hypersurface of revolution\index{surface of revolution} obtained by revolving a $q$-sphere 
$S^q \subset {\bf R}^{q+1} \subset {\bf R}^n$ about an axis ${\bf R}^q \subset {\bf R}^{q+1}$ or a $p$-sphere
$S^p \subset {\bf R}^{p+1} \subset {\bf R}^n$ about an axis ${\bf R}^p \subset {\bf R}^{p+1}$.

\noindent
${\rm(b)}$ Two hypersurfaces obtained by revolving a $q$-sphere 
$S^q \subset {\bf R}^{q+1} \subset {\bf R}^n$ about an axis of revolution ${\bf R}^q \subset {\bf R}^{q+1}$ are M\"{o}bius
equivalent if and only if they have the same value of
$\rho = |r|/a$, where $r$ is the signed radius
of the profile sphere $S^q$  and $a>0$ is the distance from the center of $S^q$ to the axis of revolution.\\
\end{theorem}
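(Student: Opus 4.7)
The approach is to refine the Lie equivalence given by Theorem~\ref{thm:dupin-cyclides}(b) into a M\"{o}bius equivalence with an explicit hypersurface of revolution, and then to separately classify the M\"{o}bius equivalence classes by the parameter $\rho = |r|/a$.

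For part (a), I would first compute directly that the standard product $T_r = S^q(r) \times S^p(\sqrt{1-r^2}) \subset S^n$, for any $r \in (0,1)$, stereographically projects from a pole chosen inside the $\mathbf{R}^{q+1}$ factor (so that the pole misses $T_r$ and the $O(p+1)$ symmetry descends to the image) to a hypersurface of revolution in $\mathbf{R}^n$ obtained by revolving a $q$-sphere about a $q$-dimensional axis $\mathbf{R}^q \subset \mathbf{R}^{q+1}$; varying $r$ realizes every value $\rho \in (0, \infty)$. The family $\{T_r\}$ lies in a single Lie equivalence class, its members being related by the parallel transformations $P_t$ of Section~\ref{sec:3.4}. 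By Theorem~\ref{thm:dupin-cyclides}(b), $M$ is Lie equivalent to an open subset of this class, and I would argue that the non-M\"{o}bius degree of freedom in the Lie sphere transformation can be absorbed into the parameter $r$: the Lie transformation $\beta$ taking $M$ to $T_{1/\sqrt 2}$ can be composed with a suitable parallel transformation $P_t$ so that the resulting map $P_t \circ \beta:M \to T_r$ is itself a M\"{o}bius transformation. Projecting back to $\mathbf{R}^n$ then exhibits $M$ as M\"{o}bius equivalent to a hypersurface of revolution of the stated form.

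For part (b), sufficiency is a direct construction: a translation (to match centers), a rotation (to align axes), and a central dilation (which scales $r$ and $a$ by the same factor and so preserves $\rho$) are all M\"{o}bius transformations, and their composition carries one hypersurface of revolution onto the other. For necessity, I would identify $\rho$ as a M\"{o}bius invariant by means of the focal structure. A cyclide of characteristic $(p,q)$ has exactly two focal submanifolds, and these are M\"{o}bius invariants of the Legendre lift; for the hypersurface of revolution, they are the axis $\mathbf{R}^q$ (viewed as a $q$-sphere through $\infty$) and a $p$-sphere of radius $a$ in the hyperplane perpendicular to the axis through the profile sphere's center. A direct computation of the inversive distance between this pair of ``spheres'' in $\mathbf{R}^n \cup \{\infty\}$ expresses it as a monotone function of $\rho$, so distinct values of $\rho$ yield distinct M\"{o}bius classes.

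\textbf{The main obstacle} is the Lie-to-M\"{o}bius reduction in part (a): verifying that within the Lie class of the standard product, the M\"{o}bius orbits meet the parallel family $\{T_r\}$ in exactly one point apiece, so that the extra degree of freedom in the Lie sphere group beyond the M\"{o}bius subgroup is fully captured by the parameter $r$. Once that structural property of the Lie sphere action on cyclides is pinned down, the rest of the proof reduces to the explicit calculations with stereographic projection and with the M\"{o}bius invariant of a pair of focal spheres.
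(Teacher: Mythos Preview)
The paper does not actually prove Theorem~\ref{thm:4.3.2}; it only states it and refers to \cite{Cec3} and \cite[pp.~151--159]{Cec1} for the argument. So there is no in-paper proof to compare against directly. That said, your proposal has a genuine gap that would prevent it from going through as written.

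The problem is in the Lie-to-M\"{o}bius reduction for part~(a). Your claim that ``varying $r$ realizes every value $\rho\in(0,\infty)$'' is false: each $T_r=S^q(r)\times S^p(\sqrt{1-r^2})$ is a compact \emph{embedded} hypersurface in $S^n$, and stereographic projection (from any pole off $T_r$) yields a compact embedded hypersurface in ${\bf R}^n$, hence a \emph{ring} cyclide with $\rho<1$. The spindle cyclides ($\rho>1$) are only immersed, and the horn case $\rho=1$ is a degenerate limit; none of these can be M\"{o}bius images of any $T_r$. Equivalently, writing the two curvature-sphere families of the standard product as spanning orthogonal subspaces $E_1,E_2\subset{\bf R}^{n+3}_2$ of signatures $(q+1,1)$ and $(p+1,1)$, the M\"{o}bius class of a cyclide in this Lie class is governed by the decomposition $e_{n+3}=v_1+v_2$ with $v_i\in E_i$: the parallel family $\{T_r\}$ only sweeps out the range where both $\langle v_1,v_1\rangle$ and $\langle v_2,v_2\rangle$ are negative, whereas a general cyclide can have one component spacelike. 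So there is no choice of $t$ making $P_t\circ\beta$ M\"{o}bius in general, and the parallel family does \emph{not} meet every M\"{o}bius orbit.

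The approach in \cite{Cec1} avoids this by working directly with the pair $(E_1,E_2)$ determined by the two curvature-sphere maps of $M$ itself. A M\"{o}bius transformation (one fixing $[e_{n+3}]$) is used to move $(E_1,E_2)$ into a standard position relative to the improper point and to $e_{n+3}$, and the resulting Euclidean projection is then read off as a surface of revolution; the type (ring, horn, spindle) and the value of $\rho$ fall out of the signs and ratio of $\langle v_1,v_1\rangle,\langle v_2,v_2\rangle$. Your idea for part~(b) is closer to the mark, though note that the two focal sets are a $q$-sphere and a $p$-sphere in $S^n$, not hyperspheres, so ``inversive distance'' in the usual sense does not literally apply; the invariant is better expressed via the $(E_1,E_2)$ decomposition of $e_{n+3}$ just described.
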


 In the case $g=3$, Pinkall \cite{P1}, \cite{P3} (see also the English translation  \cite{P1}),  obtained a
classification up to Lie equivalence of proper Dupin hypersurfaces
 in ${\bf R}^4$ with three distinct principal curvatures of multiplicity one.
 Another version of Pinkall's proof in the irreducible case was given later by Cecil
and Chern \cite{CC2} (see also \cite{CC2u}, \cite[pp. 168--190]{Cec1}).  
Cecil and Chern developed the method of moving Lie frames which can be applied to
the general study of Legendre submanifolds.  This approach has been applied successfully by 
Niebergall   \cite{N1}--\cite{N2},
Cecil and Jensen \cite{CJ2}--\cite{CJ3}, 
and by Cecil, Chi and Jensen \cite{CCJ2} to obtain local classifications
of higher-dimensional Dupin hypersurfaces.  

In particular, Cecil and Jensen \cite{CJ2} proved that for a connected irreducible proper
Dupin hypersurface
with three principal curvatures, all of the multiplicities must be equal, and the hypersurface
must be Lie equivalent to an open subset of an isoparametric hypersurface in 
a sphere with three principal curvatures.  

An open problem is the classification of reducible Dupin hypersurfaces of arbitrary dimension with $g = 3$
principal curvatures up to Lie equivalence.  Pinkall \cite{P1}, \cite{P3}, found such a classification in the case 
$M^3 \subset {\bf R}^4$, and it may be possible to obtain results in higher dimensions using his approach.

As noted in Section \ref{sec:3.7}, local classification in the case $g=4$ has been studied extensively
in the papers of Niebergall \cite{N1}--\cite{N2},
Cecil and Jensen \cite{CJ2}--\cite{CJ3},
and by Cecil, Chi and Jensen\index{Cecil--Chi--Jensen} \cite{CCJ2}, 
culminating in Theorem \ref{thm:CCJ} of Section \ref{sec:3.7}.  The method of moving
frames is central to the approach of those papers.

As to local classifications in the case $g=6$, an approach similar to that of Cecil, Chi and Jensen \cite{CCJ2} is plausible, but the calculations involved would be
very complicated, unless some new algebraic insight is found to simplify the situation.

\noindent Department of Mathematics and Computer Science

\noindent College of the Holy Cross

\noindent Worcester, MA 01610, U.S.A.

\noindent email: tcecil@holycross.edu

\end{document}